\newtheorem{theorem}{Theorem}
\newtheorem*{maintheorem}{Main Theorem}
\newtheorem*{slogan}{Slogan}
\numberwithin{theorem}{section}
\theoremstyle{plain}
\newtheorem{corollary}[theorem]{Corollary}
\newtheorem{definition}[theorem]{Definition}
\newtheorem{lemma}[theorem]{Lemma}
\newtheorem{proposition}[theorem]{Proposition}
\theoremstyle{remark}
\newtheorem{example}[theorem]{Example}
\newtheorem{problem}{Question}
\newtheorem{remark}[theorem]{Remark}
\newtheorem{history}[theorem]{History}
\newtheorem{elab}[theorem]{Elaboration}
\numberwithin{equation}{section}
\begin{document}
\title[Volume of line bundles via valuation vectors]{Volume of line bundles via valuation vectors\linebreak(different from Okounkov bodies)}
\author{O. Braunling}
\address{Mathematical Institute, University of Freiburg, Ernst-Zermelo-Strasse 1, 79104
Freiburg im Breisgau, Germany}
\thanks{The author was partially supported by DFG GK1821 \textquotedblleft
Cohomological Methods in Geometry\textquotedblright.}

\begin{abstract}
Up to a factor $1/n!$, the volume of a big line bundle agrees with the
Euclidean volume of its Okounkov body. The latter is the convex hull of top
rank valuation vectors of sections, all with respect to a single flag. In this
text we give a different volume formula, valid in the ample cone, also based
on top rank valuation vectors, but mixing data along several different flags.

\end{abstract}
\maketitle

A recent paper of K\"{u}ronya and Lozovanu uses an interesting viewpoint to
study Newton--Okounkov bodies $\Delta_{\underline{v}}(D)$ of surfaces: The
valuation vectors whose convex hull forms $\Delta_{\underline{v}}(D)$ can be
interpreted as local intersection numbers, \cite[Remark 1.8]{MR3781431}. This
idea is the starting point for this text. A priori unrelated to this, work of
Parshin expresses \textit{global} intersection numbers in terms of rank $n$
valuation vectors \cite{MR697316}. In this paper we combine both ideas. We get
a new formula for the volume of $D$, using valuation vectors belonging to a
number of \textit{different} valuations/flags. As such, it is of a quite
different nature than the construction of Okounkov bodies, which uses only a
single valuation. Pursuing this idea, we are led to a formula whose summands
all have the shape \textquotedblleft$\frac{1}{n!}\det(\ldots)$%
\textquotedblright, which has a very tempting interpretation as the volume of
an $n$-simplex. Although it remains an unresolved mystery, this suggests that
our formula might also have an interpretation in terms of convex polytopes $-$
albeit necessarily of a different nature than Okounkov bodies.\medskip

Let us explain this a little slower and more carefully: If $\underline{x}%
^{1},\ldots,\underline{x}^{n}\in\mathbb{R}^{n}$ are vectors, then by%
\begin{equation}
\left.  \mathrm{simplex}\left\langle \underline{x}^{1},\underline{x}%
^{2},\ldots,\underline{x}^{n}\right\rangle \right.  \label{luma9a}%
\end{equation}
we refer to the oriented simplex formed as the convex hull of $\{0,\underline
{x}^{1},\underline{x}^{2},\ldots,\underline{x}^{n}\}$. We also remember the
orientation, i.e. whether $\underline{x}^{1},\underline{x}^{2},\ldots
,\underline{x}^{n}$ define the standard orientation or not. It has a (signed)
volume,%
\begin{equation}
\operatorname*{Vol}\left.  \mathrm{simplex}\left\langle \underline{x}%
^{1},\underline{x}^{2},\ldots,\underline{x}^{n}\right\rangle \right.
=\frac{1}{n!}\det\left[  \,\left.  \underline{x}^{i}\right.  \,\right]
_{i=1,\ldots,n}\text{.} \label{luma9}%
\end{equation}
Any divisor $D$ on an irreducible $n$-dimensional smooth projective variety
$X/k$ has an invariant called its `\textit{volume}'. This invariant is usually
defined in terms of the growth of global sections under taking powers of the
attached line bundle. Over $\mathbb{C}$, and if $D$ happens to be very ample,
one can alternatively pull back the Fubini--Study metric of $\mathbb{P}%
_{\mathbb{C}}^{N}$ along the associated projective embedding%
\[
X\hookrightarrow\mathbb{P}_{\mathbb{C}}^{N}%
\]
and then the Riemannian metric volume of $X$ can also be taken as the
definition, at least after rescaling it by the factor $\frac{1}{n!}$. However,
the concept of `\textit{volume}' can also be understood as the real volume of
a convex body in Euclidean space. To do this, pick a top rank valuation
$\underline{v}:k\left(  X\right)  ^{\times}\rightarrow\mathbb{Z}%
_{\operatorname*{lex}}^{n}$ on the function field of $X$, and define%
\[
\Delta_{\underline{v}}(D):=\left.  \mathrm{closed}\text{ }\mathrm{convex}%
\text{ }\mathrm{hull}\right.  \left\{  \frac{1}{m}\underline{v}(s)\right\}
_{s,m}\qquad\text{inside }\mathbb{R}^{n}\text{,}%
\]
where $(s,m)$ runs through all pairs $m\geq1$ and $s\in H^{0}(X,\mathcal{O}%
_{X}(mD))\setminus\{0\}$. This is the so-called \emph{Newton--Okounkov body}
of $D$ for the valuation $\underline{v}$. Changing the valuation gives
different convex bodies $\Delta_{(-)}(D)$, but they always have the same
volume. This volume is again $\frac{1}{n!}$-th of the birational
`\textit{volume}'. This characterization of volume is due to Lazarsfeld and
Musta\c{t}\u{a} \cite{MR2571958}.

Let us focus on a rather special situation:\ Assume that $D$ is ample and that
the so-called graded semigroup $\Gamma_{\underline{v}}(D)$ is finitely
generated (we define this carefully later). Then we can phrase the above fact
in a stronger form: For $m\gg1$ big enough to make $mD$ very ample, we get a
local trivialization $(U_{\alpha},h_{\alpha})_{\alpha\in I}$ of $mD$ as a
Cartier divisor on a finite open cover $(U_{\alpha})_{\alpha\in I}$; each
$h_{\alpha}$ comes from a global section of $\mathcal{O}_{X}(mD)$, and
moreover%
\[
\Delta_{\underline{v}}(D)=\mathrm{convex}\text{ }\mathrm{hull}\left\{
\frac{1}{m}\underline{v}(h_{\alpha})\right\}  _{\alpha\in I}\text{.}%
\]
In particular, in this case $\Delta_{\underline{v}}(D)$ is a convex polytope.
By a `local trivialization as a Cartier divisor' we mean an explicit \v{C}ech
$0$-cocycle $\check{H}^{0}(\{U_{\alpha}\},\mathcal{K}_{X}^{\times}%
/\mathcal{O}_{X}^{\times})$. Note that we take the convex hull running over
local equations $h_{\alpha}$, but all with respect to the same valuation. Our
main result gives a different formula for the volume of $D$. It\newline%
$\left.  \qquad\right.  $(A) also uses only the global sections $h_{\alpha}$
as above, but\newline$\left.  \qquad\right.  $(B) with respect to a family of
different top rank valuations $\underline{w}$,\newline depending on the global
geometry of the variety.

\begin{maintheorem}
Let $X/k$ be an irreducible smooth projective variety of dimension $n$.

\begin{enumerate}
\item Let $\underline{v}:k\left(  X\right)  ^{\times}\rightarrow
\mathbb{Z}_{\operatorname*{lex}}^{n}$ be a top rank valuation.

\item Let $D$ be an ample divisor.

\item Suppose the graded semigroup $\Gamma_{\underline{v}}(D)$ is finitely generated.

\item Choose $m\geq1$ such that $mD$ is very ample.
\end{enumerate}

Then there is a local trivialization of $mD$ as a Cartier divisor,
$(U_{\alpha},h_{\alpha})_{\alpha\in I}$ such that the $h_{\alpha}$ are
restrictions of global sections to the opens $U_{\alpha}$. Moreover,%
\begin{align}
&  \operatorname*{Vol}\left(  \underset{\alpha\in I}{\mathrm{convex}\text{
}\mathrm{hull}}\left(  \frac{1}{m}\underline{v}(h_{\alpha})\right)  \right)
=\sum_{\underline{w}}\sum_{c=0}^{n}(-1)^{c}[k\left(  \underline{w}\right)
:k]\label{luma0e}\\
&  \qquad\qquad\qquad\qquad\qquad\operatorname*{Vol}\left.  \mathrm{simplex}%
\left\langle \underline{w}(h_{\alpha_{0}(\underline{w})}),\ldots
\widehat{\underline{w}(h_{\alpha_{c}(\underline{w})})}\ldots,\underline
{w}(h_{\alpha_{n}(\underline{w})})\right\rangle \right.  \text{.}\nonumber
\end{align}
Here $\underline{w}$ runs over a finite set of top rank valuations
$\underline{w}:k\left(  X\right)  ^{\times}\rightarrow\mathbb{Z}%
_{\operatorname*{lex}}^{n}$ and there are well-defined values $\alpha
_{i}(\underline{w})\in I$ attached to each. We write $k\left(  \underline
{w}\right)  $ to denote the residue field of $\underline{w}$. Both sides of
the equation agree with the volume of the Newton--Okounkov body.
\end{maintheorem}

This is a shortened formulation of our Theorem \ref{thm_B}. The full version
of the theorem explains where the valuations $\underline{w}$ and the values
$\alpha_{i}(\underline{w})$ come from and how to determine them. It turns out
that they will be canonically determined once \textit{any} trivialization with
such properties is chosen. However, the full story is somewhat involved (it
involves the set $\mathcal{G}$ of Definition \ref{def_SetG} if you wish to
jump ahead).\medskip

Note that we have the global sections $h_{(-)}$ on both sides of the equation,
but only the chosen fixed valuation $\underline{v}$ on the left, while several
different valuations appear on the right. It is too complicated to describe
here how the $\underline{w}$ arise, but their choice is \textit{independent}
of the choice of $\underline{v}$. In particular, in a general situation,
$\underline{v}$ will not appear among the $\underline{w}$ on the right
side.\medskip

The statement of the main theorem is quite convoluted, so let us stress the
main point informally:

\begin{slogan}
If $D$ is very ample, there is a formula%
\[%
\begin{tabular}
[c]{lll}%
$\mathrm{Volume}$ $\mathrm{of}$ $\mathrm{Okounkov}$ $\mathrm{body}$ & $=$ &
$\mathbb{Z}$\textrm{-}$\mathrm{linear}\text{ }\mathrm{comb.}\text{
}\mathrm{of}\text{ }\mathrm{volumes}\text{ }\mathrm{of}\text{ }n$%
\textrm{-}$\mathrm{simplices}$\\
&  & \\
\multicolumn{1}{c}{$\mathrm{convex}$ $\mathrm{hull}$ $\mathrm{of}$
$\underline{v}(h_{(-)})$} & \multicolumn{1}{c}{} &
\multicolumn{1}{c}{$\mathrm{simplices}$ $\mathrm{with}$ $\mathrm{vertices}$
$\mathrm{at}$ $\underline{w}(h_{(-)})$}\\
\multicolumn{1}{c}{} & \multicolumn{1}{c}{} & \multicolumn{1}{c}{$\mathrm{for}%
$ $\mathrm{a}$ $\mathrm{finite}$ $\mathrm{number}$ $\mathrm{of}$
$\mathrm{valuations}$ $\underline{w}$,}%
\end{tabular}
\ \ \
\]
where the $h_{(-)}$ are global sections which restrict on opens of a finite
open cover to local trivializations of $D$ as a Cartier divisor.\medskip
\end{slogan}

We will explicitly evaluate both sides of our formula in a family of examples
on Hirzebruch surfaces. Just to get into the \textquotedblleft look
\&\ feel\textquotedblright\ how our formula may look in a concrete example
case:%
\begin{align}
&  \operatorname*{Vol}\left(  \mathrm{convex}\text{ }\mathrm{hull}\left(
\underline{v}(h_{0});\underline{v}(h_{2});\underline{v}(h_{4});0\right)
\right)  =\label{lvj}\\
&  \qquad\frac{1}{2}\det%
\begin{pmatrix}
\underline{w}_{1}(h_{4}) & \underline{w}_{1}(h_{2})\\
\underline{w}_{2}(h_{4}) & \underline{w}_{2}(h_{2})
\end{pmatrix}
-\frac{1}{2}\det%
\begin{pmatrix}
\underline{w}_{1}(h_{0}) & \underline{w}_{1}(h_{2})\\
\underline{w}_{2}(h_{0}) & \underline{w}_{2}(h_{2})
\end{pmatrix}
\nonumber\\
&  \qquad+\frac{1}{2}\det%
\begin{pmatrix}
\underline{w}_{1}(h_{0}) & \underline{w}_{1}(h_{4})\\
\underline{w}_{2}(h_{0}) & \underline{w}_{2}(h_{4})
\end{pmatrix}
-\frac{1}{2}\det%
\begin{pmatrix}
\underline{w}_{1}^{\prime}(h_{0}) & \underline{w}_{1}^{\prime}(h_{4})\\
\underline{w}_{2}^{\prime}(h_{0}) & \underline{w}_{2}^{\prime}(h_{4})
\end{pmatrix}
\text{.}\nonumber
\end{align}
We refer to \S \ref{sect_Example} for notation and details. Three top rank
valuations $\underline{v}$, $\underline{w}$ and $\underline{w}^{\prime}$
appear in this formula.\medskip

We have chosen a particularly provocative formulation in Equation
\ref{luma0e}. In principle we are only comparing volumes, so we could also
have spelled out the volume of the simplices on the right-hand side. However,
since the vertices entering the convex geometry on the left and right side
look so similar, I would dream that this equality might just be a
\textquotedblleft shadow\textquotedblright\ of a stronger identity. For
example, writing%
\[%
\begin{array}
[c]{cc}%
\triangle_{\underline{w}}:= &
\raisebox{-0.4295in}{\includegraphics[
height=0.8832in,
width=1.0288in
]%
{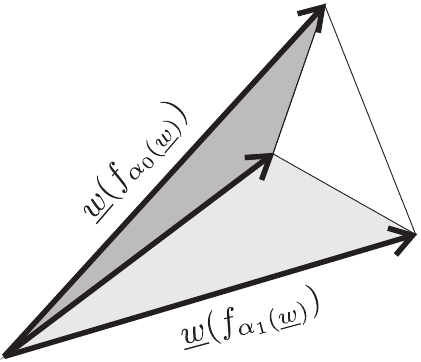}%
}%
\end{array}
\]
for the $n$-simplices on the right-hand side of the formula, one could imagine
that there might be a systematic way to assemble these simplices to a polytope
in their own right. So far all my attempts to properly understand this have
been futile, so this remains a wild dream.\medskip

The right hand side of the formula in the theorem is also valid without the
assumption that $\Gamma_{\underline{v}}(D)$ is finitely generated. However,
then the Newton--Okounkov body does not need to be a polytope and we lose the
left side of the equation. We still get the correct volume though (this will
be Theorem \ref{thm_MainFormula}).\medskip

Our method is based on not very well-known approaches to intersection theory,
notably mixing ideas of Bloch \cite{MR0342514} and Parshin \cite{MR697316},
\cite{ParshinAdeleTheory}. These approaches have a special strength: they do
not have any trouble if the set-theoretic dimension of the intersection is
wrong, e.g. for negative self-intersections, and at the same time they remain
local (unlike Riemann--Roch style ideas, like Snapper, Kleiman's via $\chi$)
and algebraic, unlike for example singular cohomology. Concretely, we use the
framework of Bloch, which uses \v{C}ech cohomology instead of the less
well-known ad\`{e}les of schemes in Parshin's approach. Nonetheless, the ideas
how this relates to top rank valuations are imported from Parshin's method. We
wrote this text in a self-contained way, so no previous familiarity with these
methods is assumed.\medskip

In this text a `variety' is a scheme which is finite type and separated over
some field. Natural numbers begin with zero, $\mathbb{N}:=\mathbb{Z}_{\geq0}$.

\section{Valuation vectors and\ Okounkov body}

Let $k$ be a field. Suppose $X/k$ is an $n$-dimensional irreducible smooth
projective variety. Let $v$ be a top rank valuation on $X$ (i.e. of rank equal
to the dimension $n$). It is necessarily discrete with (the lexicographically
ordered) value group $\mathbb{Z}_{\operatorname*{lex}}^{n}$, see \cite[Ch. VI,
p. 90, Corollary]{MR0389876}; we write $\underline{v}:k\left(  X\right)
^{\times}\rightarrow\mathbb{Z}_{\operatorname*{lex}}^{n}$. We also call
$\underline{v}(f)=(\underline{v}_{1}(f),\ldots,\underline{v}_{n}%
(f))\in\mathbb{R}_{\operatorname*{lex}}^{n}$ a \emph{valuation vector} and the
$\underline{v}_{i}(f)\in\mathbb{Z}$ its \emph{components}. We shall always
tacitly assume that our valuations are trivial on the base field $k$. One way
to produce such a valuation is to pick an admissible flag:

\begin{definition}
\label{def_Flag}A\ \emph{flag} in $X$ is the datum%
\[
Y_{\bullet}:X=Y_{0}\supset Y_{1}\supset Y_{2}\supset\cdots\supset
Y_{0}\text{,}%
\]
where $Y_{i}$ is an integral closed subscheme of codimension $i$. In
particular, $Y_{0}$ is a single closed point. Following \cite{MR2571958}, we
call the flag \emph{admissible} if each $Y_{i}$ is regular at $Y_{0}$.
\end{definition}

While \cite{MR2571958} focusses on admissible flags, the entire theory of
Newton--Okounkov bodies can be formulated with respect to arbitrary top rank
valuations on the rational function field of a variety. This is very nicely
explained in \cite{MR3694973}.

The \emph{graded semigroup} attached to a divisor $D$ is%
\begin{equation}
\Gamma_{\underline{v}}(D):=\left\{  \left.  (\underline{v}(s),m)\right\vert
s\in H^{0}(X,\mathcal{O}_{X}(mD))\setminus\{0\}\text{ for any }m\geq0\right\}
\subseteq\mathbb{Z}^{n}\times\mathbb{N}\text{.} \label{lima5a}%
\end{equation}

\begin{definition}
\label{def_Okounkov}The \emph{(Newton--)Okounkov body} $\Delta_{\underline{v}%
}(D)\subset\mathbb{R}^{n}$ is given by%
\[
\Delta_{\underline{v}}(D):=\overline{\left.  \mathrm{convex}\text{
}\mathrm{hull}\right.  \left(  \Gamma_{\underline{v}}(D)\right)  }%
\cap(\mathbb{R}^{n}\times\{1\})\text{,}%
\]
i.e. we take the closed convex hull of all the points in $\Gamma
_{\underline{v}}(D)$ inside $\mathbb{R}^{n}\times\mathbb{R}$, and then
intersect with the coordinate hyperplane $m=1$. Alternatively,%
\begin{equation}
\Delta_{\underline{v}}(D):=\overline{\left.  \mathrm{convex}\text{
}\mathrm{hull}\right.  \left\{  \left.  \frac{1}{m}\underline{v}(s)\right\vert
s\in H^{0}(X,\mathcal{O}_{X}(mD))\setminus\{0\}\text{ for }m\geq1\right\}  }
\label{lima5}%
\end{equation}
inside $\mathbb{R}^{n}$.
\end{definition}

\begin{history}
The original definition is due to Okounkov \cite{MR1400312}, \cite{MR1995384},
but with a focus on a different kind of question. Both \cite{MR2571958} as
well as \cite{MR2950767} have brought the concept to the midst of birational geometry.
\end{history}

For $m=0$ observe that $\Gamma_{\underline{v}}(D)$ contains the origin
$(0,\ldots,0)$, so as soon as it contains the point $(\underline{v}(s),m)$,
the connecting segment to the origin crosses the $m=1$ hyperplane at
$(\frac{1}{m}\underline{v}(s),1)$. This essentially shows the equivalence of
both definitions.

\begin{example}
If the graded semigroup $\Gamma_{\underline{v}}(D)$ is finitely generated,
$\Delta_{\underline{v}}(D)$ is a rational polytope. Examples with
$\Gamma_{\underline{v}}(D)$ finitely generated are rather rare, but see
\cite[Proposition 14]{MR3207370} for a construction of examples.
\end{example}

We begin with an elementary observation.

\begin{proposition}
\label{prop_TrivPolytope}Let $X/k$ be an irreducible smooth projective
variety, $\underline{v}$ a top rank valuation and $D$ an ample divisor. Pick
$m\geq1$ such that $mD$ is very ample. Then there exists a local
trivialization $(U_{\alpha},h_{\alpha})_{\alpha\in I}$ of $\mathcal{O}%
_{X}(mD)$ as a Cartier divisor with $I$ a finite index set and

\begin{enumerate}
\item each $h_{\alpha}$ on $U_{\alpha}$ is the restriction of a global
section; for simplicity we write $h_{\alpha}\in H^{0}(X,\mathcal{O}_{X}(mD))$;

\item $(h_{\alpha})_{\alpha}$ with $h_{\alpha}\in H^{0}(U_{\alpha}%
,\mathcal{K}_{X}^{\times}/\mathcal{O}_{X}^{\times})$ is a Cartier divisor
representative of $mD$;

\item and moreover the \emph{trivialization polytope}%
\begin{equation}
\left.  \underset{\alpha\in I}{\mathrm{convex}\text{ }\mathrm{hull}}\right.
\left(  \frac{1}{m}\underline{v}(h_{\alpha})\right)  \subseteq\Delta
_{\underline{v}}(D)\text{,} \label{luma2}%
\end{equation}
is contained inside the Newton--Okounkov body.
\end{enumerate}
\end{proposition}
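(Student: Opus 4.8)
The plan is to produce the trivialization by the standard very-ample recipe and then check the containment (3) by a semigroup argument. First I would use that $mD$ is very ample to get a projective embedding $\iota\colon X\hookrightarrow\mathbb{P}^{N}_{k}$ with $\iota^{*}\mathcal{O}(1)\cong\mathcal{O}_{X}(mD)$. Pulling back the coordinate hyperplanes $Z_{0},\ldots,Z_{N}$ gives global sections $s_{0},\ldots,s_{N}\in H^{0}(X,\mathcal{O}_{X}(mD))$ with no common zero, so the opens $U_{j}:=\{s_{j}\neq 0\}$ cover $X$, and on $U_{j}$ the section $s_{j}$ trivializes $\mathcal{O}_{X}(mD)$. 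Taking $I:=\{0,\ldots,N\}$, $U_{\alpha}:=U_{\alpha}$ and $h_{\alpha}:=s_{\alpha}$ (viewed in $H^{0}(U_{\alpha},\mathcal{K}_{X}^{\times}/\mathcal{O}_{X}^{\times})$ via its restriction), we get a \v{C}ech $0$-cocycle with values in $\mathcal{K}_{X}^{\times}/\mathcal{O}_{X}^{\times}$ whose class is $mD$: on overlaps $h_{\alpha}/h_{\beta}=s_{\alpha}/s_{\beta}\in\mathcal{O}_{X}^{\times}(U_{\alpha}\cap U_{\beta})$ since both $s_{\alpha},s_{\beta}$ are nowhere vanishing there. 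This gives (1) and (2) at once. (If one prefers, discard those $\alpha$ for which $U_{\alpha}=\varnothing$; $I$ stays finite.)

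For (3), the point is simply that each $h_{\alpha}$ is a nonzero element of $H^{0}(X,\mathcal{O}_{X}(mD))$. By the definition of the graded semigroup $\Gamma_{\underline{v}}(D)$ in \eqref{lima5a}, the pair $(\underline{v}(h_{\alpha}),m)$ lies in $\Gamma_{\underline{v}}(D)$ for every $\alpha\in I$. Hence, as observed after Definition \ref{def_Okounkov}, the point $\tfrac{1}{m}\underline{v}(h_{\alpha})$ lies in $\Delta_{\underline{v}}(D)$: the segment from the origin $(0,\ldots,0)\in\Gamma_{\underline{v}}(D)$ to $(\underline{v}(h_{\alpha}),m)$ crosses the hyperplane $m=1$ precisely at $(\tfrac{1}{m}\underline{v}(h_{\alpha}),1)$, so this point is in the convex hull used in the first description of $\Delta_{\underline{v}}(D)$. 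Since $\Delta_{\underline{v}}(D)$ is convex and contains each of the finitely many vertices $\tfrac{1}{m}\underline{v}(h_{\alpha})$, it contains their convex hull, which is exactly \eqref{luma2}.

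There is essentially no obstacle here: the statement is, as the paper says, an elementary observation. The only mild point of care is the bookkeeping around the two meanings of $h_{\alpha}$ — as a global section of $\mathcal{O}_{X}(mD)$ and as a local Cartier datum in $\mathcal{K}_{X}^{\times}/\mathcal{O}_{X}^{\times}$ — and checking that the transition functions $s_{\alpha}/s_{\beta}$ really are units, which follows from basepoint-freeness of the $s_{j}$. One should also note that the nonvanishing of the $s_j$ only individually (not the whole family vanishing at a point) is what is needed: it gives that $U_\alpha$ is the nonvanishing locus and hence $h_\alpha$ trivializes there, while joint basepoint-freeness gives that the $U_\alpha$ cover $X$.
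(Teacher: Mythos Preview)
Your argument is correct and is precisely the unwinding of the paper's one-line proof (``Direct. Once $mD$ is very ample, it is globally generated.''): global generation gives the trivializing sections, and the containment (3) follows immediately from the definition of $\Delta_{\underline{v}}(D)$.
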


\begin{proof}
Direct. Once $mD$ is very ample, it is globally generated.
\end{proof}

\begin{example}
In general the polytope of Equation \ref{luma2} will be strictly smaller than
$\Delta_{\underline{v}}(D)$. Necessarily so, because the construction always
yields a polytope, yet there are examples of Newton--Okounkov bodies which are
non-polyhedral, \cite[\S 6.3]{MR2571958}. Such examples can only exist in
dimension $\geq3$, and further such examples can be found in \cite[\S 3]%
{MR2889138}.
\end{example}

\begin{example}
\label{example_ToricF}Quite on the contrary, for an ample divisor on a toric
variety and $\underline{v}$ coming from a $T$-invariant flag, we can achieve
equality in Equation \ref{luma2}. Indeed, the Newton--Okounkov body always
agrees with the divisor polytope, so it is necessarily polyhedral, as follows
from \cite[Proposition 6.1, (i)]{MR2571958}. Let $X/k$ be a smooth proper
toric variety. Let $D$ be any $T$-invariant divisor, i.e.%
\[
D=\sum_{\sigma}n_{\sigma}D_{\sigma}\text{,}%
\]
where $\sigma\in\Sigma(1)$ runs through the rays of the polyhedral fan
$\Sigma\subset N_{\mathbb{R}}$ such that $X=X(\Sigma)$. Following the notation
of Fulton \cite{MR1234037}, let $M:=\operatorname*{Hom}(N,\mathbb{Z})$. Then
if $u_{\alpha}$ runs through the maximal cones of $\Sigma$, on each affine
open $U_{\alpha}:=\operatorname*{Spec}k[u_{\alpha}^{\vee}\cap M]$ the line
bundle $\mathcal{O}(D)$ is trivialized $\left.  \mathcal{O}(D)\right\vert
_{U_{\alpha}}=\left.  \frac{1}{h_{\alpha}}\mathcal{K}_{X}\right\vert
_{U_{\alpha}}$ by the unique monomial $h_{\alpha}\in M$ determined by%
\begin{equation}
\left\langle h_{\alpha},\sigma\right\rangle =-n_{\sigma}\text{,} \label{luma4}%
\end{equation}
where $\sigma$ runs through all rays of the cone $u_{\alpha}$. As $X$ is
smooth, and thus $\sigma$ a smooth cone, this amounts to a system of equations
with a unique solution. We have%
\[
H^{0}(X,\mathcal{O}_{X}(D))=k\text{-}\operatorname*{span}\left\langle h\in
M\mid\left\langle h,\sigma\right\rangle \geq-n_{\sigma}\text{ for all }%
\sigma\in\Sigma(1)\right\rangle \,\text{,}%
\]
the usual description of the sections via the divisor polytope $P_{D}$,
\cite[p. 66]{MR1234037}. As is discussed in the proof of \cite[p. 68,
Proposition]{MR1234037}, $\mathcal{O}(D)$ is generated by global sections if
and only if for each $\alpha$, $\left\langle h,\sigma\right\rangle
\geq-n_{\sigma}$ holds for all rays $\sigma$ (also those not being faces of
the cone $u_{\alpha}$). If $D$ is ample, this is satisfied, \cite[p. 70,
Proposition]{MR1234037}. This puts us in the situation described in
Proposition \ref{prop_TrivPolytope}, but with equality of polytopes in
Equation \ref{luma2}. We depict below on the left the polyhedral fan in
$N_{\mathbb{R}}$, the rays being numbered $\sigma_{1},\sigma_{3},\ldots$ with
odd indices, and the dotted lines represent each of the Equations \ref{luma4}.
The shaded area is the divisor polytope, and equivalently, up to identifying
spaces as in \cite[Proposition 6.1, (i)]{MR2571958}, the Newton--Okounkov
body.
\[%
{\includegraphics[
height=1.2489in,
width=3.7576in
]%
{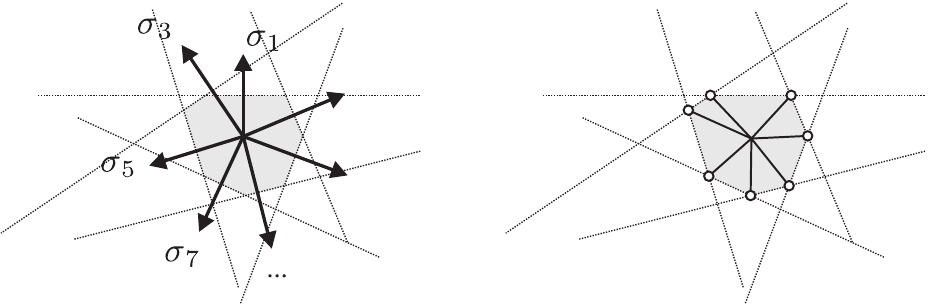}%
}%
\]
The bullet points above on the right correspond to the functions $h_{\alpha}$.
Their convex hull agrees with the divisor polytope. Having the subject of this
paper in mind, these bullet points amount to what we want to have on the left
side of Equation \ref{luma0e} in the introduction.
\end{example}

The idea of the previous example also works for general varieties if the
graded semigroup is finitely generated.

\begin{lemma}
\label{Lemma_Triv}We make the same assumptions as in Proposition
\ref{prop_TrivPolytope}. If the graded semigroup $\Gamma_{\underline{v}}(D)$
is finitely generated, then the claim of Proposition \ref{prop_TrivPolytope}
is true with the additional property that the trivialization polytope is the
entire Newton--Okounkov body.
\end{lemma}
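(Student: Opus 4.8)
The plan is to upgrade Proposition \ref{prop_TrivPolytope} by choosing the local trivializations more cleverly, so that the valuation vectors $\frac{1}{m}\underline v(h_\alpha)$ hit \emph{all} the vertices of $\Delta_{\underline v}(D)$. Since $\Gamma_{\underline v}(D)$ is finitely generated, $\Delta_{\underline v}(D)$ is a rational polytope with finitely many vertices, each of the form $\frac{1}{m_j}\underline v(s_j)$ for suitable $s_j \in H^0(X,\mathcal O_X(m_j D))$ coming from semigroup generators. First I would pass to a single common multiple: replacing $D$ by $mD$ and the sections by appropriate powers/products, one arranges that there is a fixed $m$ (also making $mD$ very ample, by enlarging if necessary, using that $\Gamma_{\underline v}(\ell D)$ stays finitely generated) and sections $s_1,\dots,s_r \in H^0(X,\mathcal O_X(mD))$ with $\Delta_{\underline v}(D) = \mathrm{convex\ hull}\{\frac{1}{m}\underline v(s_j)\}_j$. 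The point is that scaling the semigroup grading does not change the Okounkov body but lets us put everything in a single graded piece.

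Next I would ensure these $s_j$ can serve as a \emph{local trivialization}, i.e. that their non-vanishing loci $U_j := X \setminus \{s_j = 0\}$ cover $X$. This is where global generation of $mD$ enters: the linear system $|mD|$ is base-point free, but the particular sections $s_1,\dots,s_r$ need not be. So I would augment the set: keep $s_1,\dots,s_r$ (which pin down the vertices of the polytope) and throw in finitely many further global sections $h$ of $\mathcal O_X(mD)$ whose non-vanishing loci, together with the $U_j$, cover $X$. Crucially, every such additional section satisfies $\frac{1}{m}\underline v(h) \in \Delta_{\underline v}(D)$ by Proposition \ref{prop_TrivPolytope}(3) (or directly from the definition of the Okounkov body), so adjoining them does not enlarge the convex hull beyond $\Delta_{\underline v}(D)$. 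Thus the full family $(U_\alpha, h_\alpha)_{\alpha \in I}$ — now a genuine local trivialization as a Cartier divisor, each $h_\alpha$ a restriction of a global section — has
\[
\underset{\alpha \in I}{\mathrm{convex\ hull}}\left(\tfrac{1}{m}\underline v(h_\alpha)\right) \supseteq \mathrm{convex\ hull}\{\tfrac{1}{m}\underline v(s_j)\}_j = \Delta_{\underline v}(D),
\]
while the reverse inclusion is Proposition \ref{prop_TrivPolytope}(3). Hence equality.

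The main obstacle is the bookkeeping around the fixed denominator $m$: one must simultaneously (i) have $m$ large enough that $mD$ is very ample, (ii) have the vertices of $\Delta_{\underline v}(D)$ realized in the single degree $m$ by global sections, and (iii) not lose finite generation when passing to $mD$. Points (i) and (iii) are routine — finite generation of $\Gamma_{\underline v}(D)$ implies finite generation of the Veronese subsemigroups, and one takes a large enough $m$ divisible by the relevant vertex denominators and by an integer making $D$ Veronese-very-ample. Point (ii) requires knowing that for a finitely generated graded semigroup a sufficiently divisible Veronese is generated in degree one, equivalently that the vertices of $\Delta_{\underline v}(D)$ are rational with a common denominator; this is exactly the "rational polytope" statement already recorded in the Example after Definition \ref{def_Okounkov}. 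With these in hand the argument is just the inclusion-and-reverse-inclusion above. I would also remark that this Lemma is the mechanism by which the left-hand side of the Main Theorem becomes literally the volume of $\Delta_{\underline v}(D)$, which is why it is stated separately here rather than folded into Proposition \ref{prop_TrivPolytope}.
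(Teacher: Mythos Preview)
Your proposal is correct and follows essentially the same approach as the paper's proof. Both arguments rest on the same two ingredients: (a) finite generation of $\Gamma_{\underline v}(D)$ gives $\Delta_{\underline v}(D)$ as the convex hull of finitely many points $\frac{1}{m_i}\underline v(s_i)$, and after taking powers $s_i^{N/m_i}$ one realizes all of them in a single degree $N$; (b) one then augments the collection $\{s_i'\}$ by further global sections so that the resulting non-vanishing loci cover $X$, noting that the extra valuation vectors automatically lie in $\Delta_{\underline v}(D)$ and so do not enlarge the convex hull. The paper does (b) before (a) and phrases the augmentation as ``add this open to any given finite trivialization,'' but the content is identical. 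Your worry (iii) about preserving finite generation when passing to $mD$ is unnecessary: you never need $\Gamma_{\underline v}(mD)$ to be finitely generated, only that the vertex-realizing sections $s_i' = s_i^{N/m_i}$ live in degree $N$, which is immediate.
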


\begin{proof}
(Step 1) Let $(\underline{v}(s_{i}),m_{i})$ be a finite set of semigroup
generators for $\Gamma_{\underline{v}}(D)$. We claim that we may simplify
Equation \ref{lima5} to%
\begin{equation}
\Delta_{\underline{v}}(D)=\left.  \mathrm{convex}\text{ }\mathrm{hull}\right.
\left\{  \frac{1}{m_{i}}\underline{v}(s_{i})\right\}  \text{.} \label{lima5b}%
\end{equation}
This holds since any $(\underline{v}(s),m)$ in Equation \ref{lima5a} can be
written as%
\[
(\underline{v}(s),m)=\sum_{i}n_{i}(\underline{v}(s_{i}),m_{i})\qquad
\text{for}\qquad n_{i}\in\mathbb{Z}_{\geq1}\text{.}%
\]
The line segment from $(0,\ldots,0)$ to $(\underline{v}(s),m)$ intersects the
$m=1$ hyperplane at%
\[
\frac{1}{m}(\underline{v}(s),m)=\frac{1}{\sum_{j}n_{j}m_{j}}\left(  \sum
_{i}n_{i}\underline{v}(s_{i}),\sum_{i}n_{i}m_{i}\right)  =\left(  \sum
_{i}\left(  \frac{n_{i}\cdot m_{i}}{\sum n_{j}m_{j}}\right)  \frac{1}{m_{i}%
}\underline{v}(s_{i}),1\right)
\]
and we have $\frac{n_{i}\cdot m_{i}}{\sum n_{j}m_{j}}\in\lbrack0,1]$ and the
sum of all these coefficients is $1$, so this is a convex combination of the
vectors in Equation \ref{lima5b}. Being the convex hull of finitely many
points, we do not need to take the topological closure in Equation
\ref{lima5b}. We have $\operatorname*{div}(s_{i})\geq-m_{i}D$ and since the
divisor is supported in a closed subscheme of codimension $\geq1$, we may pick
a dense open $U$ such that we have equality $\left.  \operatorname*{div}%
(s_{i})\right\vert _{U}=\left.  -mD\right\vert _{U}$. We may add this open to
any given finite trivialization; repeat this for all $i$.\newline(Step
2)\ Since $\frac{1}{m_{i}}\underline{v}(s_{i})=\frac{1}{m_{i}N}\underline
{v}(s_{i}^{N})$ for any $N\geq1$, we may pick a common multiple of the
denominators, and then Equation \ref{lima5b} can be rewritten as
$\Delta_{\underline{v}}(D)=\left.  \mathrm{convex}\text{ }\mathrm{hull}%
\right.  \left\{  \frac{1}{N}\underline{v}(s_{i}^{\prime})\right\}  $ for
suitable $s_{i}^{\prime}:=s_{i}^{N/m_{i}}\in H^{0}(X,\mathcal{O}_{X}(N\cdot
D))$ and $N$ independent of $i$. We have arrived at the claim of Proposition
\ref{prop_TrivPolytope}, but now the trivialization polytope agrees with
$\Delta_{\underline{v}}(D)$ instead of merely being contained in it.
\end{proof}

\begin{example}
Under similar assumptions as in Lemma \ref{Lemma_Triv}, Anderson constructs a
family $\tilde{X}\rightarrow\mathbb{A}^{1}$ such that the central fiber
$\tilde{X}_{0}$ (up to normalization) agrees with the toric variety of the
polytope $\Delta_{Y_{\bullet}}(D)$, and all other fibers $\tilde{X}_{t}$,
$t\neq0$, are isomorphic to $X$, \cite{MR3063911}. Along this degeneration,
Lemma \ref{Lemma_Triv} transforms into the situation discussed in Example
\ref{example_ToricF}.
\end{example}

\section{The intersection form}

\subsection{Intersection form via topology\label{sect_IntersectionForm}}

Let us first recall some basics around intersection numbers. In his book
\cite{MR2095471}, Lazarsfeld uses the following elementary approach: Suppose
$X/\mathbb{C}$ is an $n$-dimensional integral proper variety. Then we may
attach to any line bundle $L$ its first Chern class $c_{1}(L)\in
H^{2}(X,\mathbb{Z})$, which can in turn be defined as the image of the
connecting map of the exponential sequence of sheaves%
\begin{equation}
0\longrightarrow\mathbb{Z}\overset{\cdot2\pi i}{\longrightarrow}%
\mathcal{O}_{X}\overset{\exp}{\longrightarrow}\mathcal{O}_{X}^{\times
}\longrightarrow0\text{,} \label{lta1}%
\end{equation}
namely%
\[
H^{1}(X,\mathcal{O}_{X}^{\times})\longrightarrow H^{2}(X,\mathbb{Z})\text{.}%
\]
Now, just using the product in the singular cohomology ring, one obtains the
intersection form%
\[
\int_{X}L_{1}\cdots L_{n}:=\int_{X}c_{1}(L_{1})\smile\cdots\smile c_{1}%
(L_{n})\in\mathbb{Z}\text{,}%
\]
where the integral on the right denotes the evaluation against the fundamental
class $[X]\in H_{2n}(X,\mathbb{Z)}$. In other words, the intersection form
arises as the composition of%
\begin{align*}
&  H^{1}(X,\mathcal{O}_{X}^{\times})\otimes\cdots\otimes H^{1}(X,\mathcal{O}%
_{X}^{\times})\\
&  \qquad\qquad\longrightarrow H^{2}(X,\mathbb{Z})\otimes\cdots\otimes
H^{2}(X,\mathbb{Z})\overset{\smile}{\longrightarrow}H^{2n}(X,\mathbb{Z}%
)\overset{\cdot\lbrack X]}{\longrightarrow}\mathbb{Z}\text{.}%
\end{align*}
In this text, we shall use a different approach based on Milnor $K$-theory. It
is based on writing Chow groups as sheaf cohomology groups:

\begin{theorem}
[Bloch--Quillen, Grayson]\label{thm_BQ}Suppose $X/k$ is a smooth variety. Then
there is a canonical ring isomorphism%
\[
\operatorname*{CH}\nolimits^{\ast}(X)\cong H_{\operatorname*{Zar}}^{\ast
}(X,\mathcal{K}_{\ast}^{\operatorname*{M}})\text{,}%
\]
where $\mathcal{K}_{\ast}^{\operatorname*{M}}$ is the Milnor $K$-theory sheaf
(whose definition we recall below). On the right-hand side we use sheaf
cohomology with respect to the Zariski topology on $X$.
\end{theorem}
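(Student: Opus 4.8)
The plan is to derive this from the Gersten resolution of the Milnor $K$-theory sheaf, which at once computes the left-hand cohomology groups as Chow groups and transports the cup product to the intersection product; this is the route of Bloch, Quillen and Grayson. The argument breaks into three steps.

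First I would recall the \emph{Gersten resolution}. For $X/k$ smooth, writing $X^{(q)}$ for the set of codimension-$q$ points, $i_{x}$ for the inclusion of the closure of $x$, and $k(x)$ for the residue field of $x$, the sheaf $\mathcal{K}_{r}^{\operatorname{M}}$ sits in a canonical exact sequence of Zariski sheaves
\[
0 \longrightarrow \mathcal{K}_{r}^{\operatorname{M}} \longrightarrow \bigoplus_{x \in X^{(0)}} (i_{x})_{\ast} K_{r}^{\operatorname{M}}(k(x)) \longrightarrow \bigoplus_{x \in X^{(1)}} (i_{x})_{\ast} K_{r-1}^{\operatorname{M}}(k(x)) \longrightarrow \cdots \longrightarrow \bigoplus_{x \in X^{(r)}} (i_{x})_{\ast}\mathbb{Z} \longrightarrow 0 \text{,}
\]
where the differentials are assembled from the tame symbols and residue maps of Milnor $K$-theory (they square to zero by Kato's residue computations) and each term, being a direct sum of constant sheaves on irreducible closed subschemes pushed forward to $X$, is flasque. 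Exactness $-$ Gersten's conjecture for Milnor $K$-theory $-$ is the one nontrivial input; I would cite Kerz, building on Quillen's geometric presentation lemma, Gabber, and Elbaz-Vincent--M\"uller-Stach--Kerz. Granting this, the resolution is flasque, hence acyclic for $\Gamma(X,-)$.

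Second, I would read off the cohomology. A flasque resolution computes sheaf cohomology, so $H^{p}_{\operatorname{Zar}}(X,\mathcal{K}_{r}^{\operatorname{M}})$ is the $p$-th cohomology of the complex of global sections $\bigoplus_{x \in X^{(0)}} K_{r}^{\operatorname{M}}(k(x)) \to \bigoplus_{x \in X^{(1)}} K_{r-1}^{\operatorname{M}}(k(x)) \to \cdots$, the \emph{Gersten complex}. Taking $r=p$, there is nothing in degree $p+1$ (the coefficients would be $K_{-1}^{\operatorname{M}}=0$), so $H^{p}_{\operatorname{Zar}}(X,\mathcal{K}_{p}^{\operatorname{M}})$ is the cokernel of $\bigoplus_{x \in X^{(p-1)}} k(x)^{\times} \to \bigoplus_{x \in X^{(p)}}\mathbb{Z}$, using $K_{1}^{\operatorname{M}}(k(x))=k(x)^{\times}$ and $K_{0}^{\operatorname{M}}(k(x))=\mathbb{Z}$. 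The point is that this last differential, restricted to the summand of a codimension-$(p-1)$ point $x$ with closure $W=\overline{\{x\}}$, is exactly $f \mapsto \operatorname{div}_{W}(f)$, the order-of-vanishing map. Hence $H^{p}_{\operatorname{Zar}}(X,\mathcal{K}_{p}^{\operatorname{M}}) = Z^{p}(X)/\mathrm{Rat}^{p}(X) = \operatorname{CH}^{p}(X)$, and summing over $p$ gives an isomorphism of graded abelian groups; it is canonical because every ingredient (the resolution, the residue maps, the divisor map) is natural in $X$.

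Third $-$ and this will be the main obstacle $-$ I must promote this to a ring isomorphism, i.e.\ match the cup product on $\bigoplus_{p} H^{p}_{\operatorname{Zar}}(X,\mathcal{K}_{p}^{\operatorname{M}})$ induced by the sheaf pairing $\mathcal{K}_{p}^{\operatorname{M}}\otimes\mathcal{K}_{q}^{\operatorname{M}}\to\mathcal{K}_{p+q}^{\operatorname{M}}$ with the intersection product on $\operatorname{CH}^{\ast}(X)$. My plan is reduction to the diagonal: both products are bilinear, both are compatible with flat pullback, proper pushforward, and the external product $H^{p}(X)\otimes H^{q}(Y)\to H^{p+q}(X\times Y)$, and both intersection-type products are recovered as the Gysin pullback $\Delta^{\ast}$ of the external product along the diagonal $\Delta\colon X\hookrightarrow X\times X$, which is a regular immersion since $X$ is smooth; so it suffices to check that the two Gysin maps agree. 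The cleanest bookkeeping device for all of this is Rost's formalism of cycle modules: $\mathcal{K}^{\operatorname{M}}_{\ast}$ is a cycle module, its Chow groups with coefficients simultaneously compute $H^{p}_{\operatorname{Zar}}(X,\mathcal{K}_{p}^{\operatorname{M}})$ and $\operatorname{CH}^{p}(X)$, and Rost already builds the graded-ring structure and identifies it with Fulton's intersection product; alternatively one invokes Grayson's original comparison of products in $K$-theory with intersections of cycles. I expect the delicate point to be precisely the verification that the sheaf-theoretic Gysin map along a regular immersion coincides with the specialization and deformation-to-the-normal-cone construction underlying the definition of the intersection product $-$ everything else is formal.
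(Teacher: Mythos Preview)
The paper does not supply its own proof of this theorem: it is stated as a named result (Bloch--Quillen, Grayson) and accompanied only by a \emph{History} remark tracing the attribution to Bloch, Quillen, Kato, Grayson, and finally Rost's cycle-module formalism \cite{MR1418952}, which is the version the paper actually invokes. So there is no in-paper proof to compare against.

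That said, your outline is exactly the standard route the paper is pointing to. Your Step~1 (Gersten resolution, exactness via Kerz) and Step~2 (identifying the tail of the Gersten complex with divisors modulo rational equivalence) are the additive half of the Bloch--Quillen formula, and your Step~3 --- reducing the product comparison to a Gysin map along the diagonal and packaging everything in Rost's cycle modules --- is precisely the ``more modern formulation due to Rost'' the paper cites. Your identification of the delicate point (that the sheaf-theoretic Gysin along a regular immersion agrees with the deformation-to-the-normal-cone construction) is accurate; in Rost's paper this is the content of the sections on the pullback for regular embeddings and its compatibility with the ring structure. Nothing in your sketch is wrong or missing a key idea; it is simply more detail than the paper itself provides, since the paper treats the theorem as a black box and only needs the explicit \v{C}ech-level consequence recorded later as Theorem~\ref{thm_IntersectFormula}.
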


\begin{history}
This method is not very well-known, so let us provide some background: In
degree one it boils down to $\operatorname*{CH}\nolimits^{1}(X)\cong
H_{\operatorname*{Zar}}^{\ast}(X,\mathcal{O}_{X}^{\times})$, i.e. the
classical isomorphism between Weil divisor classes and Cartier divisor
classes. The idea to generalize this to higher codimension is due to Bloch,
who discovered that codimension two algebraic cycles are related to $K_{2}%
$-groups; he even speaks of \textquotedblleft codimension two Cartier
divisors\textquotedblright\ in \cite{MR0342514}. Quillen proved the general
version. These papers used full algebraic $K$-theory, but Kato later
discovered that Milnor $K$-theory also works. Grayson proved that the
intersection product also matches the sheaf cup product structure on the right
side in the context of algebraic $K$-theory \cite{MR0491685}. In this paper we
use a more modern formulation due to Rost \cite{MR1418952} using Milnor
$K$-theory which has nice signs. See also Gillet \cite{MR2181825}.
\end{history}

Let us carefully explain how to use the above theorem for the computation of
intersection numbers. But first we should recall the definition of the sheaves
$\mathcal{K}_{\ast}^{\operatorname*{M}}$, which we will do in the next section.

\subsection{Milnor $K$-groups}

Let $F$ be any field. Then $F^{\times}$ is an abelian group, and thus a
$\mathbb{Z}$-module. Write%
\[
TF^{\times}:=\bigoplus_{p\geq0}\underset{p\text{ factors}}{\underbrace
{F^{\times}\otimes_{\mathbb{Z}}\cdots\otimes_{\mathbb{Z}}F^{\times}}}%
\]
for the free tensor algebra of $F^{\times}$, as a $\mathbb{Z}$-module. The
unusal r\^{o}le of using powers to define the $\mathbb{Z}$-module structure
can be confusing at first. For example, for any integer $n\in\mathbb{Z}$, it
means that%
\begin{equation}
n\cdot(f_{1}\otimes f_{2})=f_{1}^{n}\otimes f_{2}=f_{1}\otimes f_{2}^{n}%
\qquad\qquad\qquad f_{1}\otimes\frac{1}{f_{2}}=\frac{1}{f_{1}}\otimes
f_{2}\text{.} \label{lt1}%
\end{equation}
Now define the \emph{Milnor }$K$\emph{-theory ring} as%
\begin{equation}
K_{\ast}^{\operatorname*{M}}(F):=\frac{TF^{\times}}{\left\langle
x\otimes(1-x)\mid\text{for all }x\in F\setminus\{0,1\}\right\rangle }\text{,}
\label{lt1_z}%
\end{equation}
i.e. we quotient out the two-sided ideal generated by all pure tensors
$x\otimes(1-x)$ for $x\in F\setminus\{0,1\}$. This is called the
\emph{Steinberg relation}. It is customary to write%
\[
\{f_{1},\ldots,f_{p}\}:=f_{1}\otimes\cdots\otimes f_{p}%
\]
for the elements in this quotient ring. We obtain that $K_{\ast}%
^{\operatorname*{M}}(F)$ is a $\mathbb{Z}$-graded ring. In degrees $0$ and $1$
it is easy to understand%
\[
K_{\ast}^{\operatorname*{M}}(F)=\mathbb{Z}\oplus F^{\times}\oplus
K_{2}^{\operatorname*{M}}(F)\oplus K_{3}^{\operatorname*{M}}(F)\oplus
\ldots\text{.}%
\]
In general $K_{p}^{\operatorname*{M}}(F)$ for $p\geq2$ is rather uncomputable
except for very special cases. However, we will never really need to know what
these groups are, so this is harmless.

\begin{lemma}
[{\cite[Lemma 1.1 and 1.2]{MR0260844}}]\label{lemma_GradedCommutative}%
$K_{\ast}^{\operatorname*{M}}(F)$ is graded-commutative, and moreover
$\{x,x\}=\{x,-1\}$ holds for any $x\in F^{\times}$.
\end{lemma}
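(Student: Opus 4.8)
The plan is to establish the two claims in the natural order, deducing graded-commutativity from the identity $\{x,-x\}=0$, which in turn follows from the Steinberg relation. First I would prove the basic identity $\{x,-x\}=0$ for all $x\in F^{\times}$. For $x=1$ this is trivial since $\{1,y\}=0$ in degree-one tensors (because $1$ is the zero element of $F^{\times}$ viewed as a $\mathbb{Z}$-module). For $x\neq 1$, I would write $-x = (1-x)/(1-x^{-1})$, a standard algebraic identity valid whenever $x\neq 0,1$. Then bilinearity gives $\{x,-x\}=\{x,1-x\}-\{x,1-x^{-1}\}$; the first term vanishes by the Steinberg relation, and for the second term I would use $\{x,1-x^{-1}\}=\{x^{-1},1-x^{-1}\}^{-1}\cdot(\text{correction})$—more precisely, using the relation \ref{lt1} that $\{x,f\}=-\{x^{-1},f\}$, I rewrite $\{x,1-x^{-1}\} = -\{x^{-1},1-x^{-1}\}=0$, again by Steinberg. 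Hence $\{x,-x\}=0$ in all cases.

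Next I would deduce graded-commutativity. It suffices to show $\{x,y\}=-\{y,x\}$ for $x,y\in F^{\times}$, since the full graded-commutativity of the ring follows by multiplicativity and induction on degrees (any product of symbols can be reordered by transposing adjacent factors). Starting from $\{xy,-(xy)\}=0$ and expanding bilinearly in both slots, one gets $\{x,-x\}+\{x,-y\}+\{y,-x\}+\{y,-y\}=0$; using $\{x,-x\}=\{y,-y\}=0$ this collapses to $\{x,-y\}+\{y,-x\}=0$. On the other hand, applying the same expansion trick or directly using $\{x,-x\}=0$ one writes $\{x,-y\} = \{x,-x\}+\{x,y\cdot x^{-1}\}$—rather, cleaner: $\{x,-y\}=\{x,(-x)\cdot(y/x)\}=\{x,-x\}+\{x,y/x\}=\{x,y\}-\{x,x\}$, and similarly $\{y,-x\}=\{y,x\}-\{y,y\}$. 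Substituting into $\{x,-y\}+\{y,-x\}=0$ yields $\{x,y\}+\{y,x\}=\{x,x\}+\{y,y\}$. Setting $y=x$ here gives $2\{x,x\}=2\{x,x\}$, which is unhelpful directly, so instead I would combine this with the separately obtained relation $\{x,x\}=\{x,-1\}$ (proved below) to conclude; the cleanest route is actually to observe that $\{x,y\}+\{y,x\}=\{xy,xy\}$ follows from bilinearly expanding $\{xy,xy\}$, and then show $\{x,x\}+\{y,y\}=\{xy,xy\}$ forces the cross terms to satisfy $\{x,y\}=-\{y,x\}$ once we know $\{x,x\}=\{x,-1\}$ and that $\{-,-1\}$ is additive in the first slot.

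Finally, for the identity $\{x,x\}=\{x,-1\}$: from $\{x,-x\}=0$ and bilinearity in the second slot, $\{x,-x\}=\{x,-1\}+\{x,x\}=0$ would give $\{x,x\}=-\{x,-1\}$; but since $\{-1,-1\}$ has order dividing $2$ and more to the point $\{x,-1\}=-\{-1,x\}$ by the anti-commutativity just established (or directly $\{x,-1\}+\{-1,x\}=\{-x,-x\}\cdot(\dots)$ type manipulation), one reconciles the sign to get exactly $\{x,x\}=\{x,-1\}$. The main obstacle I anticipate is bookkeeping the signs consistently: the interplay between the multiplicative notation $\{x,1/y\}=-\{x,y\}$ coming from \ref{lt1} and the additive notation for the group law in $K_{*}^{\operatorname{M}}$ is exactly the kind of place where an off-by-a-sign error creeps in, so I would fix conventions at the outset and verify the base cases $x=1$, $y=1$, and $x=y$ explicitly. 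Everything else is routine expansion using bilinearity and the single input of the Steinberg relation.
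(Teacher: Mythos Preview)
The paper does not prove this lemma; it simply cites Milnor. Your plan follows exactly Milnor's standard argument: derive $\{x,-x\}=0$ from Steinberg, deduce anti-commutativity in degree two, extend to all degrees, and read off $\{x,x\}=\{x,-1\}$.

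There is, however, a genuine slip in your expansion step. You write that expanding $\{xy,-(xy)\}=0$ bilinearly gives $\{x,-x\}+\{x,-y\}+\{y,-x\}+\{y,-y\}=0$. That sum is actually $\{xy,xy\}$, not $\{xy,-xy\}$: you have implicitly used $(-x)(-y)=-xy$, which is false. The correct (and much shorter) expansion is to write $-xy=(-x)\cdot y$ when splitting off $x$ and $-xy=x\cdot(-y)$ when splitting off $y$:
\[
0=\{xy,-xy\}=\{x,-xy\}+\{y,-xy\}=\bigl(\{x,-x\}+\{x,y\}\bigr)+\bigl(\{y,x\}+\{y,-y\}\bigr)=\{x,y\}+\{y,x\}.
\]
This gives anti-commutativity in one line, and your subsequent detour through $\{x,x\}+\{y,y\}=\{xy,xy\}$ becomes unnecessary. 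For the last identity, once anti-commutativity is in hand you have $2\{x,x\}=0$, so $\{x,x\}=-\{x,x\}$; combined with $\{x,-1\}+\{x,x\}=\{x,-x\}=0$ this yields $\{x,x\}=-\{x,-1\}=\{x,-1\}$ directly, without the sign-reconciliation contortions you sketch. Your anticipated obstacle (sign bookkeeping) is exactly where the plan went wrong; with the corrected expansion everything is clean.
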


\begin{lemma}
\label{lemma_StdShapeForValuation}Let $F$ be a field and $v$ a discrete rank
one valuation. Then every element in $K_{p}^{\operatorname*{M}}(F)$ is a
finite $\mathbb{Z}$-linear combination of elements of the shapes%
\[
\{\pi,u_{2},\ldots,u_{p}\}\qquad\text{and}\qquad\{u_{1},u_{2},\ldots
,u_{p}\}\text{,}%
\]
where $\pi$ is a uniformizer, i.e. $v(\pi)=1$, and $u_{1},\ldots,u_{p}%
\in\mathcal{O}_{v}^{\times}$ have valuation zero.
\end{lemma}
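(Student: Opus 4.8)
The plan is to prove Lemma~\ref{lemma_StdShapeForValuation} by a direct reduction using only the bilinearity of the symbol and the fact that, once a uniformizer $\pi$ is fixed, every nonzero element of $F$ can be written as $u\pi^{k}$ with $u\in\mathcal{O}_{v}^{\times}$ and $k\in\mathbb{Z}$.

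First I would recall that $K_{p}^{\operatorname{M}}(F)$ is additively generated by symbols $\{f_{1},\ldots,f_{p}\}$ with $f_{i}\in F^{\times}$, so it suffices to rewrite a single such symbol. Fix a uniformizer $\pi$. For each index $i$ write $f_{i}=u_{i}\pi^{k_{i}}$ with $u_{i}\in\mathcal{O}_{v}^{\times}$ and $k_{i}=v(f_{i})\in\mathbb{Z}$. Using the multilinearity of the Milnor symbol in each slot (which holds because $K_{\ast}^{\operatorname{M}}(F)$ is a quotient of the tensor algebra $TF^{\times}$ and the relation $\{ab,c,\ldots\}=\{a,c,\ldots\}+\{b,c,\ldots\}$ descends), expand
\[
\{f_{1},\ldots,f_{p}\}=\{u_{1}\pi^{k_{1}},\ldots,u_{p}\pi^{k_{p}}\}
=\sum_{S\subseteq\{1,\ldots,p\}}\Big(\prod_{i\in S}k_{i}\Big)\,\big\{\,\varepsilon_{1}^{S},\ldots,\varepsilon_{p}^{S}\,\big\},
\]
where $\varepsilon_{i}^{S}=\pi$ if $i\in S$ and $\varepsilon_{i}^{S}=u_{i}$ otherwise; here I have absorbed the integer exponents $k_{i}$ into $\mathbb{Z}$-coefficients using $n\cdot\{a,b,\ldots\}=\{a^{n},b,\ldots\}$, which is exactly the $\mathbb{Z}$-module structure recalled in Equation~\ref{lt1}.

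Next I would deal with the terms in which $\pi$ appears in more than one slot, i.e.\ $|S|\geq 2$. By graded-commutativity (Lemma~\ref{lemma_GradedCommutative}) I may permute the entries of each symbol, bringing the two $\pi$'s adjacent, and then apply the identity $\{\pi,\pi\}=\{\pi,-1\}$ from the same lemma. Since $-1\in\mathcal{O}_{v}^{\times}$ (the valuation is trivial on $\{\pm1\}$, indeed on all of the prime subring's units, and $v(-1)=0$ because $(-1)^{2}=1$), this replaces one occurrence of $\pi$ by a unit. Iterating, every symbol with $|S|\geq 2$ is rewritten as a $\mathbb{Z}$-linear combination of symbols with at most one $\pi$, the remaining entries all lying in $\mathcal{O}_{v}^{\times}$. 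After again using graded-commutativity to move the single $\pi$ (if present) into the first slot, each resulting symbol has one of the two advertised shapes $\{\pi,u_{2},\ldots,u_{p}\}$ or $\{u_{1},\ldots,u_{p}\}$. Collecting terms over all $S$ finishes the argument.

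The only genuine subtlety — the step I expect to need the most care — is the handling of the repeated-$\pi$ terms, because naively one might hope $\{\pi,\pi,\ldots\}=0$, which is false in general; the correct statement is $\{\pi,\pi\}=\{\pi,-1\}$, and the sign bookkeeping introduced when permuting entries via graded-commutativity must be tracked honestly. Everything else is routine multilinear algebra in the tensor algebra modulo the Steinberg relation, and in fact the Steinberg relation itself is never used beyond what is already packaged into Lemma~\ref{lemma_GradedCommutative}.
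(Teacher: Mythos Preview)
Your proof is correct and follows essentially the same approach as the paper's: reduce to a single pure symbol, write each entry as $u_i\pi^{k_i}$, expand by multilinearity, and then use the relation $\{\pi,\pi\}=\{\pi,-1\}$ from Lemma~\ref{lemma_GradedCommutative} together with graded-commutativity to eliminate all but at most one occurrence of $\pi$. The paper's proof is a one-sentence sketch of exactly this, and your version spells out the expansion over subsets $S$ and the sign bookkeeping more explicitly than the paper does.
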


\begin{proof}
It suffices to prove this for a pure tensor $\{f_{1},\ldots,f_{p}\}$. Every
element $f\in F^{\times}$ can be written as $u\pi^{k}$ for $u\in
\mathcal{O}_{v}^{\times}$, $k\in\mathbb{Z}$. Do this in every slot. Then
multi-linearity and the relation $\{\pi,\pi\}=\{\pi,-1\}$ (Lemma
\ref{lemma_GradedCommutative}) yield the claim.
\end{proof}

We need a few more constructions.

\textbf{C1} \textit{(Restriction)} If $L/F$ is an arbitrary field extension,
there is a natural graded ring homomorphism%
\[
\operatorname*{res}\nolimits_{L/F}:K_{\ast}^{\operatorname*{M}}%
(F)\longrightarrow K_{\ast}^{\operatorname*{M}}(L)
\]
simply by using the inclusion $F^{\times}\hookrightarrow L^{\times}$ slot by slot.

\textbf{C2 }\textit{(Norms)\footnote{This is called `corestriction' or `norm'
in the literature, depending on the taste of authors.}} If $L/F$ is a
\textit{finite} field extension, there is a natural map%
\begin{equation}
N_{L/K}:K_{p}^{\operatorname*{M}}(L)\longrightarrow K_{p}^{\operatorname*{M}%
}(F) \label{lt2a}%
\end{equation}
in each degree $p$. It does not preserve the ring structure. In degree $0$ it
is $\mathbb{Z}\overset{\cdot\lbrack L:F]}{\longrightarrow}\mathbb{Z}$, and in
degree $1$ it is the usual norm map $L^{\times}\longrightarrow F^{\times}$.
The construction of this map in higher degrees is complicated. We refer to
\cite[\S 7.3]{MR2266528}. The norm is functorial in towers, i.e. if%
\[
F\subseteq L\subseteq L^{\prime}%
\]
are two finite field extensions,%
\begin{equation}
N_{L^{\prime}/F}=N_{L/F}\circ N_{L^{\prime}/L}\text{.} \label{l_NormInTowers}%
\end{equation}

\textbf{C3 }\textit{(Boundary map) }If $v$ is a discrete rank one valuation on
$F$, let $\mathcal{O}_{v}\subset F$ denote its valuation ring with maximal
ideal $\mathfrak{m}_{v}$. Write $\kappa(v):=\mathcal{O}_{v}/\mathfrak{m}_{v}$
for its residue field. There is a natural map%
\begin{equation}
\partial_{v}:K_{p}^{\operatorname*{M}}(F)\longrightarrow K_{p-1}%
^{\operatorname*{M}}(\kappa(v))\text{.} \label{lt3a}%
\end{equation}
In degree $1$ it is $F^{\times}\longrightarrow\mathbb{Z}$, $f\mapsto v(f)$,
the valuation itself. For $p\geq2$, it can fully be described as follows: If
$\pi$ is a uniformizer for $v$, i.e. $v(\pi)=1$, and $u_{1},\ldots,u_{p}%
\in\mathcal{O}_{v}^{\times}$ then define%
\begin{align}
\partial_{v}\{\pi,u_{2},\ldots,u_{p}\}  &  :=\{\overline{u_{2}},\ldots
,\overline{u_{p}}\}\label{lt3}\\
\partial_{v}\{u_{1},u_{2},\ldots,u_{p}\}  &  :=0\text{.} \label{lt3b}%
\end{align}
and by Lemma \ref{lemma_StdShapeForValuation} this suffices to uniquely pin
down the map. We neglect proving here that this amounts to a well-defined map.
The map is independent of the choice of the uniformizer $\pi$.

There are various relations between the maps of \textbf{C1}-\textbf{C3}. We
shall only need very few. Most importantly, we will use the compatibility
between boundary maps and norms.

\begin{proposition}
[{\cite[Ch. 7, Corollary 7.4.3]{MR2266528}}]\label{prop_NormCompat}Suppose $L$
is a discrete valuation field with valuation $v$ and residue field $\kappa
(v)$. Write $\mathcal{O}_{v}$ for its valuation ring. Let $L^{\prime}/L$ be a
finite field extension. Write $\mathcal{O}_{v}^{\prime}$ for the integral
closure of the valuation ring $\mathcal{O}_{v}$ inside $L^{\prime}$. Suppose
$\mathcal{O}_{v}^{\prime}$ is a finite $\mathcal{O}_{v}$-module (this will
hold whenever we have finiteness of integral closures, e.g. if $\mathcal{O}%
_{v}$ is a valuation ring coming from a variety). Then $\mathcal{O}%
_{v}^{\prime}$ is a semi-local ring and if $w$ runs through the discrete
valuations extending $v$, the diagram%
\begin{equation}
\xymatrix{ K^{\operatorname{M}}_{\ast}(L^{\prime}) \ar[r]^-{\oplus{\partial}_w} \ar[d]_{N_{L^{\prime} /L}} & \bigoplus_{w} K^{\operatorname{M}}_{\ast- 1}(\kappa(w)) \ar[d]^{\Sigma N_{\kappa(w) / \kappa (v)}} \\ K^{\operatorname{M}}_{\ast}(L) \ar[r]_-{{\partial}_v} & K^{\operatorname{M}}_{\ast- 1}(\kappa(v)) }
\label{lNormSquare}%
\end{equation}
commutes.
\end{proposition}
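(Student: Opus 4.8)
The plan is to split \ref{prop_NormCompat} into a ring-theoretic part and a $K$-theoretic part. For the ring-theoretic part: $\mathcal{O}_v$ is a discrete valuation ring, hence a principal ideal domain, and by hypothesis $\mathcal{O}_v'$ is a finite $\mathcal{O}_v$-module; therefore $\mathcal{O}_v'$ is a Noetherian, integrally closed domain of Krull dimension one, i.e. a Dedekind domain. Going-up shows every maximal ideal of $\mathcal{O}_v'$ lies over $\mathfrak{m}_v$, and module-finiteness forces there to be only finitely many, so $\mathcal{O}_v'$ is semi-local, hence a PID. Localizing at a maximal ideal $\mathfrak{m}_w$ recovers a discrete valuation ring $\mathcal{O}_w\subset L'$ with $\kappa(w)=\mathcal{O}_v'/\mathfrak{m}_w$, and these are exactly the discrete valuations $w$ extending $v$; moreover $\mathcal{O}_v'=\bigcap_w\mathcal{O}_w$. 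This settles the first assertion and pins down every object in the square.

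For the commutativity of \ref{lNormSquare} I would first dispose of the low degrees. In degree $0$ both composites vanish, since $\partial$ lands in $K^{\operatorname{M}}_{-1}=0$. In degree $1$ the square unwinds to the classical valuation-theoretic identity $v\!\left(N_{L'/L}(f)\right)=\sum_{w\mid v}[\kappa(w):\kappa(v)]\,w(f)$ for $f\in (L')^{\times}$, which one obtains by passing to completions — so that $N_{L'/L}$ becomes the product of the local norms $N_{L'_w/L_v}$ — and using $\sum_w e_w f_w=[L':L]$ there. These cases will also feed into the general degree.

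Next I would reduce to a \emph{simple} extension $L'=L(a)$. Writing a finite extension as a tower $L=L_0\subseteq L_1\subseteq\cdots\subseteq L_m=L'$ with each $L_{i+1}=L_i(a_i)$ simple, and noting that a valuation $w$ of $L'$ extends $v$ iff its restrictions descend step by step — so the sum over $\{w\mid v\}$ refines into iterated sums over $\{w_{i+1}\mid w_i\}$ — functoriality of the norm in towers (Equation \ref{l_NormInTowers}), together with the analogous tower-functoriality of the residue-field norms, reduces the square for $L'/L$ to the squares for the simple steps $L_{i+1}/L_i$. Hence it suffices to treat $L'=L(a)$.

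The simple case is the technical heart, and I expect it to be the main obstacle. Here I would follow the Bass--Tate realization of the norm through Milnor's split exact sequence
\[
0\longrightarrow K^{\operatorname{M}}_{\ast}(L)\longrightarrow K^{\operatorname{M}}_{\ast}(L(t))\xrightarrow{\ \oplus_{P}\partial_{v_{P}}\ }\bigoplus_{P}K^{\operatorname{M}}_{\ast-1}\!\left(L[t]/(P)\right)\longrightarrow 0\text{,}
\]
with $P$ ranging over the monic irreducibles of $L[t]$, in which $N_{L(t)/L}$, and hence $N_{L'/L}$ for $L'=L[t]/(P_a)$ with $P_a$ the minimal polynomial of $a$, is expressed via $-\partial_{v_{\infty}}$ and a section at $P_a$. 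Extending $v$ to the Gauss valuation $\tilde v$ on $L(t)$ (coefficientwise valuation; residue field $\kappa(v)(t)$), I would compare $\partial_{\tilde v}$ with the residues at the closed points of $\mathbb{P}^{1}_{\kappa(v)}$ lying over the place $P_a$ of $\mathbb{P}^{1}_{L}$ — i.e. the monic irreducible factors $g_w$ in $\overline{P_a}=\prod_{w}g_w^{\,e_w}\in\kappa(v)[t]$, which correspond to the valuations $w\mid v$ via $\kappa(w)=\kappa(v)[t]/(g_w)$ — and then invoke the reciprocity law that the sum of all residues on the projective line vanishes (itself a formal consequence of Milnor's sequence and the degree-$1$ case). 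Tracking this bookkeeping should produce the stated identity. The delicate points I anticipate: matching the ramification indices $e_w$ in the factorization of $\overline{P_a}$ against the multiplicities in the residue formula; the inseparable cases, both when $\kappa(w)/\kappa(v)$ is inseparable and when $P_a$ is; and the fact that well-definedness of the norm (independence of the generator $a$) is itself a nontrivial theorem of Kato. For these last points I would rely on \cite[\S 7.3--7.4]{MR2266528} rather than reprove them.
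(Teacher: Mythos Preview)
The paper does not give its own proof of this proposition: it is stated with the attribution \cite[Ch.~7, Corollary 7.4.3]{MR2266528} in the header and is used as a black box thereafter. Your proposal is a reasonable outline of the standard argument one finds in that reference (Bass--Tate/Kato): the ring-theoretic setup, the trivial degree~$0$ case, the classical degree~$1$ identity, reduction to simple extensions via tower functoriality of the norm, and finally the Milnor exact sequence for $L(t)$ combined with Weil reciprocity on $\mathbb{P}^1_{\kappa(v)}$. You even flag the genuinely delicate points (inseparability, well-definedness of the norm) and defer them to \cite{MR2266528}, which is exactly what the paper itself does for the entire statement. So there is nothing to compare: your sketch \emph{is} essentially the proof behind the citation, and the paper offers no alternative.
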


\subsection{The boundary map between points\label{sect_BdyMapsBetweenPoints}}

We need to recall a refinement of the boundary map $\partial_{v}$ for schemes.
The main point is that we wish to attach a boundary map to any codimension one
subscheme, but outside the normal locus, a codimension one scheme does not
uniquely pin down a discrete rank one valuation. The construction is a little
involved. The reader may prefer to skip it initially, keeping in mind that
`generically' Remark \ref{rmk_SimplerSituation} can be used instead.

Let $Y/k$ be an integral variety. Suppose $Z\hookrightarrow Y$ is an integral
closed subscheme of codimension one. The underlying closed subset of $Z$ is
irreducible and thus has a unique generic point, call it $z$. Being of
codimension one, the local ring $\mathcal{O}_{Y,z}$ is a one-dimensional local
domain. It is contained in the rational function field $k\left(  Y\right)  $
(the local ring at the generic point of $Y$ itself). Its residue field
$\kappa(z)$ agrees with the rational function field $k\left(  Z\right)  $ of
$Z$.

Let $\mathcal{O}_{Y,z}^{\prime}$ denote the integral closure of $\mathcal{O}%
_{Y,z}$ inside $k\left(  Y\right)  $, i.e.%
\begin{equation}
\operatorname*{Spec}\mathcal{O}_{Y,z}^{\prime}\longrightarrow
\operatorname*{Spec}\mathcal{O}_{Y,z} \label{lt8}%
\end{equation}
is the normalization morphism. This is a finite morphism of schemes since $Y$
was assumed to be of finite type over a field (finiteness of integral
closure). Thus, the ring $\mathcal{O}_{Y,z}^{\prime}$ is semi-local. Let
$\mathfrak{m}_{1},\ldots,\mathfrak{m}_{r}$ denote its maximal ideals. Then
each localization $(\mathcal{O}_{Y,z}^{\prime})_{\mathfrak{m}_{i}}\subset
k\left(  Y\right)  $ is a one-dimensional normal domain and thus a discrete
valuation ring. Let $v_{i}$ be the discrete rank one valuation of $k\left(
Y\right)  $ coming from $(\mathcal{O}_{Y,z}^{\prime})_{\mathfrak{m}_{i}}$. The
residue field of the valuation $v_{i}$ agrees with the residue field of
$(\mathcal{O}_{Y,z}^{\prime})_{\mathfrak{m}_{i}}$, so we call it $\kappa
(v_{i})$. Since each $\mathfrak{m}_{i}$ under the finite morphism of Equation
\ref{lt8} maps to $\mathfrak{m}_{z}$, we get attached residue field extensions
$\kappa(v_{i})/\kappa(z)$ and these are themselves finite.

\begin{definition}
\label{def_BdyBetweenPoints}Let $Y/k$ be an integral variety. Suppose
$Z\hookrightarrow Y$ is an integral closed subscheme of codimension one.
Define%
\[
\partial_{Z}^{Y}:K_{p}^{\operatorname*{M}}(k\left(  Y\right)  )\overset
{\oplus\partial_{v_{i}}}{\longrightarrow}\bigoplus_{i=1}^{r}K_{p-1}%
^{\operatorname*{M}}(\kappa(v_{i}))\overset{\oplus N_{\kappa(v_{i})/k\left(
Z\right)  }}{\longrightarrow}K_{p-1}^{\operatorname*{M}}(k\left(  Z\right)
)\text{,}%
\]
where the first arrow stems from the boundary maps $\partial_{v_{i}}$ of
Equation \ref{lt3a}, and each $N_{(-)/(-)}$ is a norm map as in Equation
\ref{lt2a}.
\end{definition}

\begin{remark}
\label{rmk_SimplerSituation}If $\mathcal{O}_{Y,z}$ is normal, it is itself a
discrete valuation ring. Write $v$ for its valuation. In this special case the
above definition simplifies to $\partial_{Z}^{Y}=\partial_{v}$, i.e. we are
back in the situation of Equation \ref{lt3a}.
\end{remark}

\subsection{The Milnor $K$-sheaf}

Let $X/k$ be a variety.

\begin{definition}
For any Zariski open $U\subseteq X$ define%
\begin{equation}
\mathcal{K}_{p}^{\operatorname*{M}}(U):=\ker\left(  \bigoplus_{Y_{0}}%
K_{p}^{\operatorname*{M}}(k\left(  Y_{0}\right)  )\overset{\partial_{Y_{1}%
}^{Y_{0}}}{\longrightarrow}\bigoplus_{Y_{1}}K_{p-1}^{\operatorname*{M}%
}(k\left(  Y_{1}\right)  )\right)  \text{,} \label{lta4}%
\end{equation}
where $Y_{i}\subseteq U$ runs through all integral closed subschemes of
codimension $i$ (for $i=0,1$). The map $\partial_{Y_{1}}^{Y_{0}}$ was set up
in Definition \ref{def_BdyBetweenPoints}.
\end{definition}

Note that if $X$ is integral, this simplifies to%
\[
\ker\left(  K_{p}^{\operatorname*{M}}(k\left(  X\right)  )\overset
{\partial_{Y_{1}}^{X}}{\longrightarrow}\bigoplus_{Y_{1}}K_{p-1}%
^{\operatorname*{M}}(k\left(  Y_{1}\right)  )\right)  \subseteq K_{p}%
^{\operatorname*{M}}(k\left(  X\right)  )\text{,}%
\]
so if we go to a smaller open $U^{\prime}\subseteq U$, we at worst remove
conditions when forming the kernel, so it is clear that $\mathcal{K}%
_{p}^{\operatorname*{M}}(-)$ defines a presheaf. Argueing individually for
each irreducible component, this works in general without requiring $X$ to be
integral. One can further check that $\mathcal{K}_{p}^{\operatorname*{M}}$
defines a sheaf in the Zariski topology. There is a product structure%
\begin{equation}
\smile\colon\mathcal{K}_{p}^{\operatorname*{M}}\otimes\mathcal{K}%
_{q}^{\operatorname*{M}}\longrightarrow\mathcal{K}_{p+q}^{\operatorname*{M}}
\label{lta4b}%
\end{equation}
making $\mathcal{K}_{\ast}^{\operatorname*{M}}:=\bigoplus_{p\geq0}%
\mathcal{K}_{p}^{\operatorname*{M}}$ a Zariski sheaf of graded commutative
algebras. The product is defined simply by concatenation (and thus compatible
to the one induced from the tensor algebra in the definition for fields,
Equation \ref{lt1_z}),%
\[
\{x_{1},\ldots,x_{p}\}\smile\{y_{1},\ldots,y_{q}\}:=\{x_{1},\ldots,x_{p}%
,y_{1},\ldots,y_{q}\}\text{,}%
\]
but one must check that this still lies in the kernel as in Equation
\ref{lta4}. This works as follows.

If $v$ is a discrete rank one valuation on a field $F$ and $\pi\in
\mathcal{O}_{v}$ a chosen uniformizer, one can define the \emph{specialization
map}%
\begin{equation}
s_{v}^{\pi}:K_{p}^{\operatorname*{M}}(F)\longrightarrow K_{p}%
^{\operatorname*{M}}(\kappa(v))\text{,}\qquad\qquad\{x_{1},\ldots
,x_{p}\}\longmapsto\partial_{v}\{-\pi,x_{1},\ldots,x_{p}\}\text{.}
\label{ltya2}%
\end{equation}
Unlike $\partial_{v}$, these maps depend on the uniformizer $\pi$. Based on
this, there is an extended \textquotedblleft non-linear\textquotedblright%
\ Leibniz formula for the maps $\partial_{v}$: For $x\in K_{p}%
^{\operatorname*{M}}(F)$ and $y\in K_{q}^{\operatorname*{M}}(F)$, we have%
\[
\partial_{v}(x\cdot y)=\partial_{v}(x)\cdot s_{v}^{\pi}(y)+(-1)^{p}s_{v}^{\pi
}(x)\cdot\partial_{v}(y)+\{-1\}\cdot\partial_{v}(x)\cdot\partial
_{v}(y)\text{,}%
\]
valid irrespective of the choice of the uniformizer. Thus, if $\partial
_{v}(x)=0$ and $\partial_{v}(y)=0$, it follows that $\partial_{v}(x\cdot
y)=0$, giving Equation \ref{lta4b}.

\begin{lemma}
\label{lemma_IsoOfSheaves}Suppose $X$ is normal. There is an isomorphism of sheaves

\begin{enumerate}
\item $\mathcal{K}_{0}^{\operatorname*{M}}\cong\mathbb{Z}$,

\item $\mathcal{K}_{1}^{\operatorname*{M}}\cong\mathcal{O}_{X}^{\times}$.
\end{enumerate}
\end{lemma}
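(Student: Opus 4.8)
The plan is to unwind the definition of $\mathcal{K}_p^{\operatorname{M}}$ in \eqref{lta4} for $p=0,1$ and identify the kernel directly. First I would recall that for $X$ normal and integral, the local rings $\mathcal{O}_{Y,z}$ at generic points of codimension-one subschemes are discrete valuation rings, so by Remark \ref{rmk_SimplerSituation} the boundary map $\partial_{Y_1}^{Y_0}$ is just the ordinary $\partial_{v_z}$ with $v_z$ the associated divisorial valuation. This eliminates the norm maps from the picture and reduces everything to the classical degree-one boundary $\partial_v\colon K_p^{\operatorname{M}}(k(X))\to K_{p-1}^{\operatorname{M}}(\kappa(v))$. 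Since the statement is about sheaves and normality is stable under passing to opens, it suffices to compute sections over each integral open $U\subseteq X$ (and then argue componentwise if $X$ is merely normal, not integral), so I may as well assume $X$ integral throughout.

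For $p=0$: in degree zero $K_0^{\operatorname{M}}(k(Y_0))=\mathbb{Z}$ and $K_{-1}^{\operatorname{M}}=0$ by convention, so the kernel in \eqref{lta4} is all of $\bigoplus_{Y_0}\mathbb{Z}$, with $Y_0$ ranging over the irreducible components of $U$; for $U$ integral this is a single copy of $\mathbb{Z}$. Checking that the restriction maps are the identity and that this identification is compatible with the obvious constant sheaf $\mathbb{Z}$ is immediate. For $p=1$: the kernel is $\ker\bigl(k(X)^\times \xrightarrow{\oplus_Z v_Z} \bigoplus_{Z} \mathbb{Z}\bigr)$, where $Z$ runs over prime divisors in $U$ and $v_Z$ is the valuation, i.e. $f\mapsto \operatorname{div}(f)$. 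So a rational function lies in $\mathcal{K}_1^{\operatorname{M}}(U)$ precisely when it has no zeros or poles along any codimension-one subscheme of $U$. Because $X$ is normal, Serre's criterion (or Hartogs-type extension: a normal domain equals the intersection of its localizations at height-one primes, $A=\bigcap_{\operatorname{ht}\mathfrak p=1}A_{\mathfrak p}$) shows this forces $f\in \mathcal{O}_X^\times(U)$: both $f$ and $f^{-1}$ are regular on all of $U$. Conversely a global unit clearly has trivial divisor. This gives the bijection $\mathcal{K}_1^{\operatorname{M}}(U)\cong\mathcal{O}_X^\times(U)$, and it is visibly compatible with restriction, hence an isomorphism of sheaves.

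The main obstacle — and it is a mild one — is the normality input in the $p=1$ case: one must justify that "no divisorial zeros or poles on a normal variety" implies "regular and invertible", which is exactly the algebraic Hartogs lemma $A=\bigcap_{\operatorname{ht}\mathfrak p=1}A_{\mathfrak p}$ for a Noetherian normal domain $A$, applied to $\mathcal{O}_X(U)$ and to $f$ and $f^{-1}$ separately. I would cite this rather than reprove it. A secondary bookkeeping point is handling the case where $X$ is normal but not irreducible: then \eqref{lta4} is a direct sum over irreducible components, and one runs the above argument on each component, so no new idea is needed. Everything else — that the maps are well-defined, that the product and the degree-zero conventions behave as claimed — has already been set up in the preceding subsections, so the proof is essentially a matter of spelling out the definitions.
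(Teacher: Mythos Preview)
Your proposal is correct and follows essentially the same route as the paper: unwind the definition \eqref{lta4} in degrees $0$ and $1$, use normality to reduce $\partial_{Y_1}^{Y_0}$ to the plain divisorial valuation, and for $p=1$ invoke the identity $\mathcal{O}_X(U)=\bigcap_{v}\mathcal{O}_v$ (your ``algebraic Hartogs'') to identify the kernel with $\mathcal{O}_X^\times(U)$. The paper's proof is terser but makes exactly the same moves.
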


\begin{proof}
(1) follows since $K_{0}^{\operatorname*{M}}(F)=\mathbb{Z}$ and $K_{-1}%
^{\operatorname*{M}}(F)=0$ for any field $F$. The direct sum over $Y_{0}$ in
Equation \ref{lta4} is exactly the same which would arise from Zariski
sheafification of the constant presheaf $\mathbb{Z}$. (2) We find%
\[
\mathcal{K}_{1}^{\operatorname*{M}}(U):=\ker\left(  \bigoplus_{Y_{0}}k\left(
Y_{0}\right)  ^{\times}\overset{\partial_{v}}{\longrightarrow}\bigoplus
_{Y_{1}}\mathbb{Z}\right)  \text{,}%
\]
where $v$ runs through all discrete rank one valuations of the function field
$k\left(  Y_{0}\right)  $. For a normal integral scheme, we have equality of
rings%
\[
\mathcal{O}_{X}(U)=\bigcap_{v}\mathcal{O}_{v}\subseteq k\left(  Y_{0}\right)
\text{,}%
\]
where $v$ runs through all rank one valuations. In particular, units in
$\mathcal{O}_{X}(U)^{\times}$ are units in all $\mathcal{O}_{v}$, and this
amounts to $\partial_{v}(x)=0$ for all $v$.
\end{proof}

One can widely generalize Lemma \ref{lemma_IsoOfSheaves} to all $\mathcal{K}%
_{n}^{\operatorname*{M}}$, \cite{MR2461425}. While aesthetically very
pleasant, this is not of any use for our immediate purposes.

\subsection{Intersection form -- reprise}

We may now explain how to rephrase the intersection form. Recall from
\S \ref{sect_IntersectionForm} that Lazarsfeld uses the intersection form%
\begin{align}
&  H^{1}(X,\mathcal{O}_{X}^{\times})\otimes\cdots\otimes H^{1}(X,\mathcal{O}%
_{X}^{\times})\nonumber\\
&  \qquad\qquad\longrightarrow H^{2}(X,\mathbb{Z})\otimes\cdots\otimes
H^{2}(X,\mathbb{Z})\overset{\smile}{\longrightarrow}H^{2n}(X,\mathbb{Z}%
)\overset{\cdot\lbrack X]}{\longrightarrow}\mathbb{Z}\text{.} \label{lta2}%
\end{align}
on a complex variety. Here the first arrow stems from the first Chern class
map $c_{1}:H^{1}(X,\mathcal{O}_{X}^{\times})\rightarrow H^{2}(X,\mathbb{Z})$
from the exponential sequence, Equation \ref{lta1}, applied to each tensor
slot individually. On the derived level, we can write the latter as a map%
\begin{equation}
\mathcal{O}_{X}^{\times}\overset{c_{1}}{\longrightarrow}\mathbb{Z}[1]\text{,}
\label{lta2a}%
\end{equation}
where $\mathbb{Z}[1]$ is a complex concentrated in degree one.

\begin{remark}
More precisely, the exact sequence in Equation \ref{lta1} defines an extension
class in the group $\operatorname*{Ext}^{1}(\mathcal{O}_{X}^{\times
},\mathbb{Z})$ in the category of sheaves with values in abelian groups. In
the derived category this $\operatorname*{Ext}$-group gets interpreted as
$\operatorname*{Ext}^{m}(\mathcal{O}_{X}^{\times},\mathbb{Z})=\mathbf{R}%
\operatorname*{Hom}(\mathcal{O}_{X}^{\times},\mathbb{Z}[m])$, giving Equation
\ref{lta2a}.
\end{remark}

Note that under the tensor product of complexes, this means that
$\mathbb{Z}[p]\otimes\mathbb{Z}[q]\cong\mathbb{Z}[p+q]$ (which one sees by
taking the total complex of the tensor product complex). Thus, from this
angle, the first arrow in Equation \ref{lta2} really stems from%
\begin{equation}
\mathcal{O}_{X}^{\times}\otimes\cdots\otimes\mathcal{O}_{X}^{\times
}\longrightarrow\mathbb{Z}[1]\otimes\cdots\otimes\mathbb{Z}[1]\overset{\sim
}{\longrightarrow}\mathbb{Z}[n] \label{lta2b}%
\end{equation}
and in cohomology this induces a map%
\[
H^{i}(X,\mathcal{O}_{X}^{\times}\otimes\cdots\otimes\mathcal{O}_{X}^{\times
})\longrightarrow H^{i}(X,\mathbb{Z}[n])\overset{\sim}{\longrightarrow}%
H^{i+n}(X,\mathbb{Z})\text{.}%
\]
Thus, we observe that the construction used in Lazarsfeld's book
\cite{MR2095471} factors over the top row in the diagram%
\begin{equation}%
\xymatrix{
H^{1}(X,\mathcal{O}_{X}^{\times})\otimes\cdots\otimes H^{1}(X,\mathcal{O}%
_{X}^{\times}) \ar[r] &
H^{n}(X,\mathcal{O}_{X}^{\times} \otimes\cdots\otimes\mathcal{O}_{X}^{\times
}) \ar[r] \ar[d]_{\gamma} &
H^{2n}(X,\mathbb{Z}) \ar[r] &
\mathbb{Z} \\
& H^{n}(X,\mathcal{K}_n^{\operatorname{M}}). \ar[urr]_{\tau}
}
\label{lfe1}%
\end{equation}
The map $\gamma$ is induced from the isomorphism of sheaves $\mathcal{K}%
_{1}^{\operatorname*{M}}\cong\mathcal{O}_{X}^{\times}$ (see Lemma
\ref{lemma_IsoOfSheaves}) and then uses the graded-commutative product
structure of the sheaf $\mathcal{K}_{p}^{\operatorname*{M}}\otimes
\mathcal{K}_{q}^{\operatorname*{M}}\longrightarrow\mathcal{K}_{p+q}%
^{\operatorname*{M}}$ to get to $\mathcal{K}_{n}^{\operatorname*{M}}$. Note
that this map $\gamma$ replaces the use of the first Chern class in Equation
\ref{lta2b}.

What is not at all obvious, is that the factorization $\tau$ exists, and this
is where Theorem \ref{thm_BQ} enters. However, for our purposes we only really
need an explicit formula for $\tau$:

\begin{theorem}
\label{thm_IntersectFormula}Let $X/k$ be an $n$-dimensional irreducible smooth
proper variety. Let $\mathfrak{U}=(U_{i})_{i\in I}$ be an open cover where $I$
is some index set. Suppose%
\begin{equation}
X=\left.  \overset{\cdot}{\bigcup_{\alpha\in I}}\Sigma_{\alpha}\right.
\qquad\text{with}\qquad\Sigma_{\alpha}\subseteq U_{\alpha} \label{ltaps7}%
\end{equation}
is a disjoint decomposition as a set. For $p\in\{1,\ldots,n\}$ let $L_{p}$ be
a line bundle on $X$, and suppose it can locally be trivialized on the open
cover $\mathfrak{U}$, so that its $1$-cocycle takes the form%
\[
(f_{\alpha,\beta}^{p})_{\alpha,\beta\in I}\in\check{H}^{1}(\mathfrak{U}%
,\mathcal{O}_{X}^{\times})
\]
in \v{C}ech cohomology. Then%
\[
\int_{X}c_{1}(L_{1})\cdots c_{1}(L_{n})=\sum_{Y_{\bullet}}[k\left(
Y_{n}\right)  :k]\partial_{Y_{n}}^{Y_{n-1}}\cdots\partial_{Y_{1}}^{Y_{0}%
}\{f_{\alpha(Y_{0})\alpha(Y_{1})}^{1},f_{\alpha(Y_{1})\alpha(Y_{2})}%
^{2},\ldots,f_{\alpha(Y_{n-1})\alpha(Y_{n})}^{n}\}\text{,}%
\]
where

\begin{enumerate}
\item the sum runs over all flags%
\[
Y_{\bullet}:Y_{n}\subset\cdots\subset Y_{1}\subset Y_{0}%
\]
in the sense of Definition \ref{def_Flag} (i.e. they need not be admissible!);

\item for any integral closed subscheme $Y\hookrightarrow X$ let $\alpha(Y)$
denote the unique index $\alpha$ such that the generic point of $Y$ lies in
$\Sigma_{\alpha}$ (see\ Equation \ref{ltaps7});

\item $k\left(  Y_{n}\right)  $ denotes the function field of the closed point
$Y_{n}$ (i.e. it is the residue field of this scheme point).
\end{enumerate}

On the right-hand side there exist only finitely many flags $Y_{\bullet}$ such
that the summand is non-zero.
\end{theorem}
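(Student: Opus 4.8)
The plan is to reduce the formula to the factorization $\tau\colon H^{n}(X,\mathcal{K}_{n}^{\operatorname*{M}})\to H^{2n}(X,\mathbb{Z})=\mathbb{Z}$ supplied by Theorem \ref{thm_BQ}, and then to compute $\tau$ completely explicitly in \v{C}ech cohomology. First I would fix the open cover $\mathfrak{U}=(U_{\alpha})_{\alpha\in I}$ together with the set-theoretic decomposition $X=\overset{\cdot}{\bigcup}\Sigma_{\alpha}$. Given the local $1$-cocycles $(f^{p}_{\alpha,\beta})$ representing $c_{1}(L_{p})\in\check{H}^{1}(\mathfrak{U},\mathcal{O}_{X}^{\times})=\check{H}^{1}(\mathfrak{U},\mathcal{K}_{1}^{\operatorname*{M}})$, I would form the \v{C}ech cup product in $\check{C}^{\bullet}(\mathfrak{U},\mathcal{K}_{\ast}^{\operatorname*{M}})$. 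On an $(n{+}1)$-fold intersection $U_{\alpha_{0}}\cap\cdots\cap U_{\alpha_{n}}$ the cup product of the alternating \v{C}ech cochains is, up to the standard sign bookkeeping, the Milnor symbol $\{f^{1}_{\alpha_{0}\alpha_{1}},f^{2}_{\alpha_{1}\alpha_{2}},\ldots,f^{n}_{\alpha_{n-1}\alpha_{n}}\}$; this is exactly the symbol appearing on the right-hand side, so the combinatorics of the indices $\alpha(Y_{i})$ will come precisely from selecting, for each flag, the unique chart whose distinguished piece $\Sigma_{\alpha}$ contains the relevant generic point.

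The heart of the argument is then the description of $\tau$. The point is that for a smooth proper $X$ the Bloch--Quillen isomorphism identifies $H^{n}(X,\mathcal{K}_{n}^{\operatorname*{M}})$ with $\operatorname*{CH}^{n}(X)=\operatorname*{CH}_{0}(X)$, and the degree map $\operatorname*{CH}_{0}(X)\to\mathbb{Z}$ sending a closed point $P$ to $[k(P):k]$ is exactly $\tau$ (this is the compatibility with the fundamental class in diagram \ref{lfe1}, i.e. the statement that $\tau$ composed with the other route around equals evaluation against $[X]$; one may also take this as the normalization characterizing $\tau$). So I would compute the image of the cup-product class under the Gersten/Cousin-type flasque resolution of $\mathcal{K}_{n}^{\operatorname*{M}}$, whose $H^{n}$ is the cokernel at the last spot $\bigoplus_{Y_{n}}K_{0}^{\operatorname*{M}}(k(Y_{n}))=\bigoplus_{Y_{n}}\mathbb{Z}$. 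Passing from a \v{C}ech class to this Gersten complex is implemented by iterated boundary maps: one peels off one codimension at a time, at each stage choosing the unique chart index via the $\Sigma_{\alpha}$-decomposition, which is exactly the operator $\partial^{Y_{n-1}}_{Y_{n}}\cdots\partial^{Y_{0}}_{Y_{1}}$ of Definition \ref{def_BdyBetweenPoints}. Summing over all flags $Y_{\bullet}$ and applying the degree map (the factor $[k(Y_{n}):k]$) gives the asserted formula. Finiteness of the support — only finitely many flags contribute — follows because the initial symbol $\{f^{1}_{\cdots},\ldots,f^{n}_{\cdots}\}$ lives in $\mathcal{K}_{n}^{\operatorname*{M}}$ over the finitely many charts, and each boundary map $\partial^{Y}_{Z}$ is zero unless $Z$ lies in the (finite) union of components of the divisor of one of the finitely many $f^{p}_{\alpha,\beta}$ appearing; an inductive divisor-support argument bounds the flags.

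The main obstacle I anticipate is the bookkeeping in comparing the \v{C}ech cup-product class with the Gersten-complex representative: one must verify that the naive concatenation $\{f^{1}_{\alpha_{0}\alpha_{1}},\ldots,f^{n}_{\alpha_{n-1}\alpha_{n}}\}$, with the chart indices dictated by the $\Sigma_{\alpha}$, really is a cocycle-level representative of the correct class, including getting all the signs $(-1)^{c}$ and orientation conventions right (graded-commutativity of $\mathcal{K}_{\ast}^{\operatorname*{M}}$ and the alternating-cochain sign rule interact here). A secondary technical point is well-definedness of $\partial^{Y}_{Z}$ outside the normal locus — but this is handled by Definition \ref{def_BdyBetweenPoints} and the norm-compatibility of Proposition \ref{prop_NormCompat}, which guarantees the iterated boundary is independent of the intermediate choices. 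Once the explicit form of $\tau$ is pinned down, the remainder is a formal unwinding of \ref{lfe1}, and the $[k(\underline{w}):k]$-weights and the simplex volumes in the main theorem will then be read off by specializing $L_{1}=\cdots=L_{n}=\mathcal{O}_{X}(mD)$ with the $f_{\alpha,\beta}=h_{\alpha}/h_{\beta}$.
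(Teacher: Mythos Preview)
The paper does not prove this theorem at all: immediately after the statement it simply records ``This is \cite[Proposition 3]{MR3104562}'' and refers the reader there. So there is no in-paper proof to compare against; what one can compare against are the hints in the appendix about how the cited reference proceeds.

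Your strategy is the correct one and matches the method of the cited source: form the \v{C}ech cup product of the $1$-cocycles to obtain a class in $\check{H}^{n}(\mathfrak{U},\mathcal{K}_{n}^{\operatorname*{M}})$, then transport it to the last term of the Gersten/Cousin resolution and apply the degree map. The appendix of the present paper makes explicit that the mechanism in \cite{MR3104562} is a \emph{double complex} comparing \v{C}ech cochains with the Cousin/Gersten resolution, together with what that paper calls ``algebraic partitions of unity'' (\cite[\S 2.1, Lemma 2]{MR3104562}); the disjoint decomposition $X=\dot{\bigcup}\Sigma_{\alpha}$ is precisely the datum that produces these partitions and thereby an explicit chain-level quasi-inverse from the \v{C}ech complex to the Gersten complex. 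This is exactly the step you flag as ``the main obstacle'': your phrase ``peels off one codimension at a time, at each stage choosing the unique chart index via the $\Sigma_{\alpha}$-decomposition'' is the right picture, but to make it a proof you need the double-complex zig-zag (or equivalently the explicit homotopy furnished by the algebraic partition of unity) rather than an ad hoc choice of representative. Once that comparison map is written down, the iterated boundary $\partial_{Y_{n}}^{Y_{n-1}}\cdots\partial_{Y_{1}}^{Y_{0}}$ and the degree weight $[k(Y_{n}):k]$ fall out exactly as you describe, and your finiteness argument is fine.

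One minor caution: your last paragraph drifts into the application to the main theorem (the $h_{\alpha}$, simplex volumes, etc.), which is downstream of this statement and handled separately in the paper (Theorem \ref{thm_MainFormula} and Elaboration \ref{elab1}); it is not part of proving Theorem \ref{thm_IntersectFormula} itself.
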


This is \cite[Proposition 3]{MR3104562}. See the rest of the cited paper for
more background on how this formula for intersection multiplicities works.

\begin{remark}
\label{rmk_L4}A few remarks on the intersection theory being used: If
$D_{1},\ldots,D_{n}$ are divisors such that $\bigcap D_{i}=pt$ is a single
closed point, the local intersection index can be computed via%
\begin{equation}
\operatorname*{res}\nolimits_{pt}\frac{\mathrm{d}f_{1}}{f_{1}}\wedge
\cdots\wedge\frac{\mathrm{d}f_{n}}{f_{n}}\text{,}\label{lhi1}%
\end{equation}
where $f_{i}$ is a local equation for $D_{i}$ and $\operatorname*{res}%
\nolimits_{pt}$ is a somewhat shortened abbreviation for Grothendieck's
residue symbol, see Griffiths--Harris \cite[p. 663, and end of page
669]{MR1288523}. Any such expression can be rephrased in terms of rank $n$
valuation vectors, by a variant of Proposition \ref{prop_DetFormula} below for
residues. However, for ample divisors, the volume is related to the top
self-intersection number, so all $D_{i}$ agree, and the condition $\bigcap
D_{i}=pt$ fails \textit{miserably} (and even if it did not, the $n$-form in
Equation \ref{lhi1} is zero). One could fix this issue by moving each $D_{i}$
within its linear equivalence class, rather non-canonically. However, the
formulation of intersection numbers as above does not run into this problem at
all. It would turn out to express intersection numbers in terms of rank $n$
valuation vectors, irrespective of the dimension of the set-theoretic
intersection, in fact this would even work if the self-intersection was highly negative.
\end{remark}

\section{The main theorem}

\subsection{Top rank valuations and $n$-DVFs}

\begin{definition}
We define an $n$\emph{-DVF with last residue field }$\kappa$ (short for
\textquotedblleft$n$-discrete valuation field\textquotedblright) as follows:

\begin{itemize}
\item A $0$-DVF with last residue field $\kappa$ is the field $\kappa$ itself.

\item An $n$-DVF\ with last residue field $\kappa$ is a field $(F,v)$ with a
discrete rank one valuation $v$ such that $\kappa(v)$ is an $(n-1)$-DVF with
last residue field $\kappa$.
\end{itemize}
\end{definition}

Although it is cleanest to use this inductive definition, one may visualize an
$n$-DVF $F$ as the following structure%
\begin{equation}%
\bfig\node x(0,1200)[F]
\node y(0,900)[\mathcal{O}_{v_1}]
\node z(300,900)[k_1]
\node w(300,600)[\mathcal{O}_{v_2}]
\node u(600,600)[k_2,]
\node v(600,300)[\vdots]
\arrow/{^{(}->}/[y`x;]
\arrow/{->>}/[y`z;]
\arrow/{^{(}->}/[w`z;]
\arrow/{->>}/[w`u;]
\arrow[v`u;]
\efig
\label{lt4b}%
\end{equation}
where each $\mathcal{O}_{v_{i}}$ is the valuation ring for the discrete rank
one valuation $v_{i}$ defined on the field displayed above $\mathcal{O}%
_{v_{i}}$ (so that the upward arrows are the inclusions of the valuation
rings), while each surjection to the right is the quotient map $\mathcal{O}%
_{v_{i}}\twoheadrightarrow\mathcal{O}_{v_{i}}/\mathfrak{m}_{v_{i}}=:k_{i}$ to
the respective residue field.

Suppose $X/k$ is an irreducible $n$-dimensional smooth projective variety. Let
$k\left(  X\right)  $ be its function field. If $\underline{v}:k\left(
X\right)  ^{\times}\rightarrow\mathbb{Z}_{\operatorname*{lex}}^{n}$ is a top
rank valuation, it takes values in $\mathbb{Z}_{\operatorname*{lex}}^{n}$,
lexicographically ordered, and one can write it as $\underline{v}%
(f)=(\underline{v}_{1}(f),\ldots,\underline{v}_{n}(f))$, where $\underline
{v}_{i}(f):k\left(  X\right)  ^{\times}\rightarrow\mathbb{Z}$ is our notation
for the components of this valuation vector.

\begin{remark}
This construction would make sense for any field instead of \textquotedblleft%
$k\left(  X\right)  $\textquotedblright, but we will only use it in the said context.
\end{remark}

It turns out that $\underline{v}_{1}$ is a rank one $\mathbb{Z}$-valued
valuation on $k\left(  X\right)  ^{\times}$, defining a valuation ring
$\mathcal{O}_{v_{1}}$; and this can be continued interatively along zig-zag as
in Figure \ref{lt4b}. So, every top rank valuation $\underline{v}:k\left(
X\right)  ^{\times}\longrightarrow\mathbb{Z}_{\operatorname*{lex}}^{n}$
determines the structure of an $n$-DVF with last residue field being the
residue field of $\underline{v}$.

\begin{example}
Every admissible flag in the sense of Definition \ref{def_Flag} defines a top
rank valuation. If we allow ourselves to switch to a different birational
model, any finite number of top rank valuations can be made to come from
admissible flags, \cite[Theorem 2.9]{MR3694973}.
\end{example}

Conversely, giving the structure of an $n$-DVF on $k\left(  X\right)  $, then
if we addtionally pick uniformizers, it pins down a unique valuation
$\underline{v}:k\left(  X\right)  ^{\times}\longrightarrow\mathbb{Z}%
_{\operatorname*{lex}}^{n}$:

\begin{definition}
\label{def_HRankVal}Let $F$ be an $n$-DVF. Fix uniformizers $\pi_{i}%
\in\mathcal{O}_{v_{i}}$. Define a rank $n$ valuation%
\begin{align}
\underline{v}:F^{\times}  &  \longrightarrow\mathbb{Z}_{\operatorname*{lex}%
}^{n}\label{lt4}\\
f  &  \longmapsto\left(  v_{1}(f),v_{2}\left(  \overline{f\pi_{1}^{-v_{1}(f)}%
}\right)  ,v_{3}\left(  \overline{\overline{f\pi_{1}^{-v_{1}(f)}}\pi
_{2}^{-v_{2}\left(  \overline{f\pi_{1}^{-v_{1}(f)}}\right)  }}\right)
,\ldots\right)  \text{.}\nonumber
\end{align}
For $n\geq2$ this valuation depends on the choice of the uniformizers.
\end{definition}

We have intentionally mimicked the definitions in \cite[\S 1.1]{MR2571958} to
stress the analogy.

\begin{remark}
Let us explain the idea and also give a more precise definition: The first
entry is just the valuation $v_{1}$ itself. Next, note that $f\pi_{1}%
^{-v_{1}(f)}$ by construction has valuation zero, so it lies in $\mathcal{O}%
_{v_{1}}^{\times}$ and thus its image under the quotient map to the residue
field is also invertible, i.e.%
\[
\overline{f\pi_{1}^{-v_{1}(f)}}\in k_{1}^{\times}\text{.}%
\]
Hence, it makes sense to consider its valuation with respect to $v_{2}$. Now
repeat this. That is, we multiply with an appropriate power of $\pi_{2}$ to
ensure we get an element in $\mathcal{O}_{v_{2}}^{\times}$. It is more elegant
to define the higher rank valuation by inductively defining%
\begin{equation}
f_{1}:=f\qquad\text{and}\qquad f_{p+1}:=\overline{f_{p}\pi_{p}^{-v_{p}(f_{p}%
)}} \label{lt6}%
\end{equation}
for $p\geq1$ and $f_{p+1}\in k_{p}$. Then%
\begin{equation}
f\mapsto(v_{1}(f_{1}),v_{2}(f_{2}),\ldots,v_{n}(f_{n}))\text{.} \label{lt6y}%
\end{equation}
Unravelling the inductive nature of this definition, we find Equation
\ref{lt4}.
\end{remark}

\begin{example}
Consider $X:=\mathbb{A}_{k}^{2}=\operatorname*{Spec}k[s,t]$. It has a $2$-DVF
structure coming from the flag of ideals $(s,t)\subset(t)\subset(0)$,%
\[%
\bfig\node x(0,1200)[k(s,t)]
\node y(0,900)[k(s){[t]}_{(t)}]
\node z(600,900)[k(s)]
\node w(600,600)[k{[s]}_{(s)}]
\node u(1100,600)[k.]
\arrow/{^{(}->}/[y`x;]
\arrow/{->>}/[y`z;]
\arrow/{^{(}->}/[w`z;]
\arrow/{->>}/[w`u;]
\efig
\]
Then for any $m\in\mathbb{Z}$, $\pi_{1}:=s^{-m}t$ is a uniformizer for
$k(s)[t]_{(t)}$. We obtain the valuation vector $\underline{v}(t)=(1,m)$,
depending on $m$. This cannot happen for the valuation vectors in the
definition of Newton--Okounkov bodies. The whole point is that $\pi
_{1}:=s^{-m}t$ only lies in the local ring $k[s,t]_{(s,t)}$ of the variety if
$m\leq0$, and then is only a local equation for $(t)$ if $m=0$, for otherwise
it is a local equation for a two component subvariety, with components cut out
by $(t)$ and a nil-thickening of $(s)$. Thus, only $m=0$ is possible in the
setting of Okounkov bodies. For a general $n$-DVF, there is no analogue of the
local ring which allows us to single out such problematic choices of uniformizers.
\end{example}

Let us point out that one can define the valuation vectors which appear in the
construction of Newton--Okounkov bodies entirely in the language of Milnor $K$-groups:

\begin{lemma}
\label{lemma_RewriteForMilnor}Let $X/k$ be an irreducible smooth proper
variety of dimension $n$ and $\underline{v}$ the top rank valuation coming
from an admissible flag $Y_{\bullet}$ in $X$. Let $f\in k\left(  X\right)
^{\times}$ be given. Let $\pi_{i}$ be the local equations of the individual
flag members. Define a vector $\underline{x}\in\mathbb{Z}^{n}$ with components%
\[
\underline{x}_{p}:=\partial_{v_{p}}\circ\cdots\circ\partial_{v_{1}}\{\pi
_{1},\ldots,\pi_{p-1},f\}\text{.}%
\]
Then $\underline{v}(f):=(\underline{x}_{1},\ldots,\underline{x}_{n})$.
\end{lemma}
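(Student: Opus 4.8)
The plan is to prove Lemma \ref{lemma_RewriteForMilnor} by unwinding the definition of $\underline{v}(f)$ from Definition \ref{def_HRankVal} (in the form of Equations \ref{lt6}--\ref{lt6y}) and matching it term by term with the iterated boundary maps $\partial_{v_p}\circ\cdots\circ\partial_{v_1}$. Since the flag is admissible, each $\mathcal{O}_{Y_i,\,\text{pt}}$ is regular, hence a localization of a regular ring, so by Remark \ref{rmk_SimplerSituation} the boundary maps $\partial_{Y_i}^{Y_{i-1}}$ are just the honest boundary maps $\partial_{v_i}$ of Equation \ref{lt3a} with no norm corrections to worry about; this removes the only genuinely subtle feature of $\partial_{(-)}^{(-)}$. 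I would state this reduction first.

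Next I would run an induction on $p$, but the cleanest bookkeeping is to track the intermediate elements. With $f_1 := f$ and $f_{p+1} := \overline{f_p \pi_p^{-v_p(f_p)}} \in k_p^\times$ as in Equation \ref{lt6}, the claim is that for each $p$,
\[
\partial_{v_p}\circ\cdots\circ\partial_{v_1}\{\pi_1,\ldots,\pi_{p-1},f\} = v_p(f_p)\in\mathbb{Z}=K_0^{\operatorname{M}}(k_p)
\]
together with the auxiliary statement that, in $K_1^{\operatorname{M}}(k_p)=k_p^\times$,
\[
\partial_{v_{p-1}}\circ\cdots\circ\partial_{v_1}\{\pi_1,\ldots,\pi_{p-1},f\} \;=\; \overline{u}_p
\]
for a suitable unit representative, so that applying one more $\partial_{v_p}$ gives $v_p$ of that unit, which is exactly $v_p(f_p)$. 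The key computational input at each stage is the defining formula for $\partial_{v}$ on symbols, Equations \ref{lt3}--\ref{lt3b}: writing $f = u\pi_1^{v_1(f)}$ with $u$ a unit, multilinearity splits $\{\pi_1,\ldots,\pi_{p-1},f\}$ into a part $\{\pi_1,\ldots,\pi_{p-1},\pi_1\}$ — which by the relation $\{\pi_1,\pi_1\}=\{\pi_1,-1\}$ of Lemma \ref{lemma_GradedCommutative} and graded-commutativity becomes a symbol all of whose entries are units (namely $\pi_2,\ldots,\pi_{p-1}$ are $v_1$-units and the $\pi_1$ has been converted to $-1$), hence killed by $\partial_{v_1}$ — and a part $\{\pi_1,\ldots,\pi_{p-1},u\}$ on which $\partial_{v_1}$ produces $\{\overline{\pi_2},\ldots,\overline{\pi_{p-1}},\overline{u}\}$ in $K_{p-1}^{\operatorname{M}}(k_1)$. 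Crucially $\overline{\pi_i}$ for $i\geq 2$ is exactly a uniformizer for $v_i$ on $k_1$ (this is what it means for $\pi_i$ to be the local equation of $Y_i$ and for the flag to be admissible — the reduction of a later flag equation stays a local equation on the preceding flag member), and $\overline{u} = f_2$ up to a unit, so we are in exactly the same shape one level down and the induction closes.

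The main obstacle is the careful handling of the split-off ``$\{\ldots,\pi_1\}$'' term and the bookkeeping of what the reductions $\overline{\pi_i}$ and $\overline{u}$ actually are: one must check that after conversion via $\{\pi_1,\pi_1\}=\{\pi_1,-1\}$ the resulting symbol really has all entries in $\mathcal{O}_{v_1}^\times$ (so that $\partial_{v_1}$ annihilates it by Equation \ref{lt3b}) — this uses that $\pi_2,\ldots,\pi_{p-1}$, being local equations of $Y_2,\ldots,Y_{p-1}$ which are properly contained in $Y_1$, are units at the generic point of $Y_1$ — and that $\overline{\pi_i}$ remains a legitimate uniformizer for the next valuation rather than merely a nonzero element, which again is exactly the admissibility hypothesis. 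Everything else is a routine application of multilinearity and the explicit formulas \ref{lt3}--\ref{lt3b}, and the independence of $\partial_{v}$ from the choice of uniformizer (noted after Equation \ref{lt3b}) means the ambiguous unit factors introduced along the way never affect the final integer.
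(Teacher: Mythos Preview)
Your approach is essentially the paper's: split $f=u\,\pi_1^{v_1(f)}$ via multilinearity, dispose of the term with a repeated $\pi_1$, apply $\partial_{v_1}$ to get $\{\overline{\pi_2},\ldots,\overline{\pi_{p-1}},\overline{u}\}$ with $\overline{u}=f_2$, and iterate. (The opening remark about $\partial_{Y_i}^{Y_{i-1}}=\partial_{v_i}$ is superfluous, since the lemma is already stated for the $\partial_{v_i}$.)

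There is, however, a real error in how you dispose of the split-off term. You assert that $\{\pi_1,\ldots,\pi_{p-1},\pi_1\}$, after using $\{\pi_1,\pi_1\}=\{\pi_1,-1\}$ and graded-commutativity, ``becomes a symbol all of whose entries are units'' and is therefore killed by $\partial_{v_1}$. This is false: the relation $\{\pi_1,\pi_1\}=\{\pi_1,-1\}$ replaces only \emph{one} copy of $\pi_1$ by $-1$, so up to sign you obtain $\{\pi_1,-1,\pi_2,\ldots,\pi_{p-1}\}$, which still contains the uniformizer $\pi_1$; applying $\partial_{v_1}$ yields $\pm\{-1,\overline{\pi_2},\ldots,\overline{\pi_{p-1}}\}$, generally nonzero. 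The correct argument---and the one the paper uses---is that $\{\pi_1,-1\}$ is $2$-torsion (since $2\{-1\}=\{1\}=0$), hence the entire split-off term is $2$-torsion; because the target after all boundary maps is $K_0^{\operatorname{M}}=\mathbb{Z}$, which is torsion-free, this contribution vanishes. The paper sets this up at the very start (``we can throughout the proof neglect elements which are $2$-torsion'') and carries a ``$+$\,($2$-torsion)'' correction through the induction. With this fix your proof goes through and coincides with the paper's.
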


Note that our choice of the $\pi_{i}$ is more strict that just requiring them
to be arbitrary uniformizers in $\mathcal{O}_{v_{i}}$; they all lie in the
local ring $\mathcal{O}_{X,Y_{n}}$. One could rewrite the vector as%
\[
\underline{x}_{1}=\partial_{v_{1}}\{f\}\qquad\underline{x}_{p}:=\partial
_{v_{p}}s_{v_{p-1}}^{-\pi_{p-1}}\cdots s_{v_{1}}^{-\pi_{1}}\{f\}
\]
for $p\geq2$ using the specialization maps of Equation \ref{ltya2}.

\begin{proof}
Since all valuation vector components lie in $\mathbb{Z}$, we can throughout
the proof neglect elements which are $2$-torsion, for once we map our
constructs to $\mathbb{Z}$, $2$-torsion elements must necessarily go to zero.
Write $f=u_{m}\pi_{m}^{v_{m}(f)}$ and then for any $m$ such that $1\leq m\leq
p$, we compute%
\begin{align}
&  \partial_{v_{p}}\circ\cdots\circ\partial_{v_{m}}\{\pi_{m},\ldots,\pi
_{p-1},f\}\nonumber\\
&  \qquad=\partial_{v_{p}}\circ\cdots\circ\partial_{v_{m}}\{\pi_{m},\ldots
,\pi_{p-1},u_{m}\}+v_{m}(f)\partial_{v_{p}}\circ\cdots\circ\partial_{v_{m}%
}\{\pi_{m},\ldots,\pi_{p-1},\pi_{m}\}\nonumber\\
&  \qquad=\partial_{v_{p}}\circ\cdots\circ\partial_{v_{m+1}}\{\overline
{\pi_{m+1}},\ldots,\overline{\pi_{p-1}},\overline{f\pi_{m}^{-v_{m}(f)}%
}\}+\text{(}2\text{-torsion)} \label{lta5y}%
\end{align}
by using multi-linearity, Lemma \ref{lemma_GradedCommutative} and Equation
\ref{lt3}. For using the latter, we have used that since the $\pi_{i}$ are
elements in the local equations for the flag in the local ring of $X$ at
$Y_{0}$, their image is a uniformizer in $\mathcal{O}_{v_{i}}$, but
necessarily a unit in all $\mathcal{O}_{v_{j}}$ with $j\neq i$. Now we observe
that the above equation admits an induction along the variable $m$. Starting
with $m=1$, the last slot \textquotedblleft$\overline{f\pi_{m}^{-v_{m}(f)}}%
$\textquotedblright\ in Equation \ref{lta5y} follows the same inductive
description along $m$ as in Equation \ref{lt6}. Since both the induction loc.
cit. as well as here begin with $f$, it follows that once $m$ equals $p$, we
will have arrived at%
\[
\underline{x}_{p}=_{\operatorname*{def}}\partial_{v_{p}}\circ\cdots
\circ\partial_{v_{1}}\{\pi_{1},\ldots,\pi_{p-1},f\}=\partial_{v_{p}}%
\{f_{p}\}+\text{(}2\text{-torsion),}%
\]
with the same meaning of $f_{p}$ as in Equation \ref{lt6}. This is just
$v_{p}(f_{p})$, so we obtain that the vector $\underline{x}$ agrees with the
one in Equation \ref{lt6y}. This proves our claim.
\end{proof}

Note that this gives a reformulation of the definition of Okounkov bodies
which does not involve rank $\geq2$ valuations anymore. Of course, apart from
that, it is the same as the usual one.

\begin{definition}
[Milnor-$K$-style Okounkov bodies]\label{def_DetOkounkovBodies}Let $X/k$ be an
irreducible smooth proper variety of dimension $n$ and $\underline{v}$ the top
rank valuation coming from an admissible flag $Y_{\bullet}$ in $X$. Then the
Newton--Okounkov body can be rephrased \textquotedblleft for Milnor $K$-theory
aficionados\textquotedblright\ as the closed convex hull of the vectors%
\[
\left\{  \frac{1}{m}\left(  \partial_{v_{1}}\{f\},\ldots,\partial_{v_{n}}%
\circ\cdots\circ\partial_{v_{1}}\{\pi_{1},\ldots,\pi_{n-1},f\}\right)
\right\}  _{f,m}%
\]
where $(f,m)$ runs through all pairs $m\geq1$ and $f\in H^{0}(X,\mathcal{O}%
_{X}(mD))\setminus\{0\}$. The $\pi_{i}$ are arbitrary local equations of the
flag entries and $v_{i}$ the rank one valuations arising from $\underline{v}$.
\end{definition}

\subsection{Local computations}

The following is surely well-known to experts, but since it is quite important
for our results, we provide full details. The same type of determinant
evaluation is used in \cite[\S 2.2]{MR697316}.

\begin{proposition}
\label{prop_DetFormula}Let $F$ be an $n$-DVF. Fix uniformizers $\pi_{i}%
\in\mathcal{O}_{v_{i}}$. Suppose $f_{1},\ldots,f_{n}\in F^{\times}$. Then%
\begin{equation}
(\partial_{v_{n}}\cdots\partial_{v_{1}})\{f_{1},\ldots,f_{n}\}=\det%
\begin{pmatrix}
\underline{v}_{1}(f_{1}) & \underline{v}_{2}(f_{1}) & \cdots & \underline
{v}_{n}(f_{1})\\
\underline{v}_{1}(f_{2}) & \underline{v}_{2}(f_{2}) &  & \vdots\\
\vdots &  & \ddots & \\
\underline{v}_{1}(f_{n}) & \cdots &  & \underline{v}_{n}(f_{n})
\end{pmatrix}
\text{,} \label{lt5}%
\end{equation}
where $\underline{v}(f)=(\underline{v}_{1}(f),\ldots,\underline{v}_{n}(f))$
encodes the components of the rank $n$ valuation attached to $F$ via
Definition \ref{def_HRankVal}. On the left-hand side the $v_{1},\ldots,v_{n}$
denote the discrete rank one valuations of the $n$-DVF\ structure attached to
$\underline{v}$, i.e. the valuations which appear in Figure \ref{lt4b}.
\end{proposition}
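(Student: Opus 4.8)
The plan is to prove this by induction on $n$, following the same strategy that the author used in Lemma~\ref{lemma_RewriteForMilnor}, and exploiting the ``non-linear Leibniz formula'' for $\partial_v$ together with the specialization maps $s_v^\pi$. The base case $n=1$ is the content of \textbf{C3}: $\partial_{v_1}\{f_1\}=v_1(f_1)=\underline{v}_1(f_1)$, which is a $1\times 1$ determinant. For the inductive step, the key observation is that applying $\partial_{v_1}$ to a symbol $\{f_1,\ldots,f_n\}$ should ``strip off'' the first row and first column of the claimed matrix and reduce to an $(n-1)$-DVF computation over the residue field $k_1=\kappa(v_1)$, which is itself an $(n-1)$-DVF with the valuations $v_2,\ldots,v_n$.

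First I would reduce to understanding $\partial_{v_1}\{f_1,\ldots,f_n\}$ as an element of $K_{n-1}^{\operatorname{M}}(k_1)$. Using the non-linear Leibniz formula inductively (or directly Lemma~\ref{lemma_StdShapeForValuation} to write each $f_i=u_i\pi_1^{v_1(f_i)}$), and discarding $2$-torsion since everything eventually lands in $\mathbb{Z}$ (as in the proof of Lemma~\ref{lemma_RewriteForMilnor}), I expect to obtain
\[
\partial_{v_1}\{f_1,\ldots,f_n\}=\sum_{j=1}^{n}(-1)^{j-1}\underline{v}_1(f_j)\,\{\overline{u_1},\ldots,\widehat{\overline{u_j}},\ldots,\overline{u_n}\}+(2\text{-torsion}),
\]
where $\overline{u_i}=\overline{f_i\pi_1^{-v_1(f_i)}}\in k_1^{\times}$ is precisely the element ``$f_{i,2}$'' from Equation~\ref{lt6} applied to each $f_i$. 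The signs come from the graded-commutativity of Lemma~\ref{lemma_GradedCommutative} when moving $\pi_1$ past earlier slots, and the relation $\{\pi_1,\pi_1\}=\{\pi_1,-1\}$ handles the diagonal terms, which contribute only $2$-torsion.

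Then I would apply the remaining boundary maps $\partial_{v_n}\cdots\partial_{v_2}$ to each surviving term, and invoke the inductive hypothesis over the $(n-1)$-DVF $k_1$: the term with index $j$ removed gives the $(n-1)\times(n-1)$ minor of the matrix $[\underline{v}_i(f_k)]$ obtained by deleting row $j$ and column $1$ (noting that for $i\geq 2$ one has $\underline{v}_i(f_k)=\underline{v}'_{i-1}(\overline{u_k})$ where $\underline{v}'$ is the rank $n-1$ valuation of $k_1$, by the very definition in \ref{lt6y}). Summing $\sum_j(-1)^{j-1}\underline{v}_1(f_j)M_{j,1}$ is exactly the cofactor (Laplace) expansion of $\det[\underline{v}_i(f_k)]$ along the first column, which closes the induction.

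The main obstacle will be the careful bookkeeping of signs and the systematic disposal of $2$-torsion at every stage: one must be sure that the $2$-torsion generated by diagonal terms $\{\pi_1,\ldots,\pi_1,\ldots\}$ and by graded-commutativity reorderings genuinely vanishes after all the $\partial_{v_i}$ have been applied and everything has landed in $\mathbb{Z}$ — this is legitimate precisely because $\mathbb{Z}$ is torsion-free, exactly the trick used in Lemma~\ref{lemma_RewriteForMilnor}. A secondary point requiring care is the compatibility of the residue-field valuation components: one must check that the $n$-DVF structure that $\underline{v}$ induces on $F$ restricts, after one step, to the $(n-1)$-DVF structure on $k_1$ whose associated rank $n-1$ valuation has components $\underline{v}_2,\ldots,\underline{v}_n$ (with the same uniformizers $\pi_2,\ldots,\pi_n$), so that the inductive hypothesis genuinely applies. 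This is immediate from Definition~\ref{def_HRankVal} and the recursion in Equation~\ref{lt6}, but it should be stated explicitly before running the induction.
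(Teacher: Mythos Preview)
Your proposal is correct. It uses the same inductive scaffolding as the paper (induction on $n$, the torsion-free target $\mathbb{Z}$ to kill $2$-torsion, and the identification of the $(n-1)$-DVF structure on $k_1$ so that $\underline{v}_{i+1}(f_k)=\underline{v}'_{i}(\overline{u_k})$), but the inductive step is organized differently.

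The paper does not compute $\partial_{v_1}\{f_1,\dots,f_n\}$ head-on. Instead it first observes that both sides of the claimed identity are multilinear and alternating in the $f_i$ (modulo $2$-torsion), hence it suffices to verify equality on the two canonical shapes $\{u_1,\dots,u_n\}$ and $\{\pi_1,u_2,\dots,u_n\}$ produced by Lemma~\ref{lemma_StdShapeForValuation}. Shape~(A) is trivially zero on both sides (first column of the matrix vanishes), and shape~(B) reduces to a single $(n-1)\times(n-1)$ determinant because the first row and first column of the matrix are $(1,0,\dots,0)$; one then invokes the inductive hypothesis once. Your route instead expands $\{f_1,\dots,f_n\}$ fully via $f_i=u_i\pi_1^{a_i}$, observes that the $|S|\ge 2$ pieces are $2$-torsion, keeps the $n$ surviving $|S|=1$ terms, applies the inductive hypothesis to each, and recognizes the sum $\sum_j(-1)^{j-1}\underline{v}_1(f_j)\det M_{j,1}$ as the Laplace expansion of the full determinant along the first column. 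So the paper buys a shorter case check (two shapes instead of an $n$-term sum), while your argument buys an explicit structural explanation of \emph{why} a determinant appears: the cofactor expansion is visible rather than implicit. Both are sound; the bookkeeping you flag (signs from graded-commutativity, disposal of $2$-torsion, matching of $\underline{v}_{\ge 2}$ with $\underline{v}'$ on $k_1$) is exactly what needs to be tracked, and you have identified it correctly.
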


\begin{proof}
We prove this by induction on $n$. For $n=1$ the claim just reads%
\[
\partial_{v_{1}}\{f_{1}\}=\underline{v}_{1}(f_{1})=v_{1}(f_{1})
\]
and thus reduces to the explanation below Equation \ref{lt3a}. Suppose the
proposition is proven for all $\ell$-DVFs with $\ell<n$. Then we prove the
case $n$: (Step 1) Both the left-hand side as well as the right-hand side in
Equation \ref{lt5} change sign under swapping $f_{i}$ with $f_{j}$: On the
left side, this is the graded-commutativity of Milnor $K$-theory, i.e. Lemma
\ref{lemma_GradedCommutative}. On the right side it amounts to swapping two
rows, so it is a standard property of the determinant. Furthermore, both the
left and right side are multiplicative in each slot: On the left side, this is
already true for the tensor algebra $TF^{\times}$. On the right side, this
follows from the multiplicativity of valuations. Next, we claim that both
sides vanish if two slots agree, i.e. $f_{i}=f_{j}$ for some $i\neq j$. This
is clear: Under anti-commutativity under swapping the two, it follows that
both sides must be $2$-torsion. However, both sides take values in
$\mathbb{Z}$. Next, we claim that it suffices to show that left and right side
agree on all elements of the shape%
\[
\text{(A) }\{u_{1},u_{2},\ldots,u_{n}\}\qquad\qquad\text{and}\qquad
\qquad\text{(B) }\{\pi,u_{2},\ldots,u_{n}\}\text{,}%
\]
where $\pi$ is a uniformizer for $\mathcal{O}_{v_{1}}$, i.e. $v_{1}(\pi)=1$,
and $u_{1},\ldots,u_{n}\in\mathcal{O}_{v_{1}}^{\times}$. To see this, use
exactly the same argument as in the proof of Lemma
\ref{lemma_StdShapeForValuation}, just replace the relation $\{\pi,\pi
\}=\{\pi,-1\}$ by using that once two slots agree, both sides are already
zero.\newline(Step 2) For the elements of shape (A) we find $\partial_{v_{1}%
}\{u_{1},\ldots,u_{n}\}=0$ by Equation \ref{lt3b}. On the other hand, in the
matrix on the right side in Equation \ref{lt5} the first colum reads%
\[%
\begin{pmatrix}
\underline{v}_{1}(u_{1})\\
\underline{v}_{1}(u_{2})\\
\vdots\\
\underline{v}_{1}(u_{n})
\end{pmatrix}
=%
\begin{pmatrix}
0\\
0\\
\vdots\\
0
\end{pmatrix}
\text{,}%
\]
so the determinant also vanishes. We find $\partial_{v_{1}}\{\pi,u_{2}%
,\ldots,u_{n}\}=\{\overline{u_{2}},\ldots,\overline{u_{n}}\}$ for elements of
shape (B) by Equation \ref{lt3}. On the other hand, we claim that%
\begin{equation}%
\begin{pmatrix}
\underline{v}_{1}(\pi) & \underline{v}_{2}(\pi) & \cdots & \underline{v}%
_{n}(\pi)\\
\underline{v}_{1}(u_{2}) & \underline{v}_{2}(u_{2}) &  & \underline{v}%
_{n}(u_{2})\\
\vdots &  & \ddots & \vdots\\
\underline{v}_{1}(u_{n}) & \cdots &  & \underline{v}_{n}(u_{n})
\end{pmatrix}
=%
\begin{pmatrix}
1 & 0 & \cdots & 0\\
0 & \underline{v}_{2}(u_{2}) &  & \underline{v}_{n}(u_{2})\\
\vdots &  & \ddots & \vdots\\
0 & \underline{v}_{2}(u_{n}) & \cdots & \underline{v}_{n}(u_{n})
\end{pmatrix}
\label{lt7}%
\end{equation}
We check the top row: Of course we have $\underline{v}_{1}(\pi)=v_{1}(\pi)=1$.
This holds for any uniformizer, so we may use $\pi:=\pi_{1}$. Thus, in terms
of Equation \ref{lt6} for $f:=\pi$ we have $f_{1}=\pi$ and then $f_{2}%
:=\overline{\pi\pi^{-v_{1}(\pi)}}=1$. But if $f_{p}=1$ for any $p\geq2$, we
inductively find $f_{p+1}:=\overline{1\cdot\pi_{p}^{-v_{p}(1)}}=1$. Checking
the left column in Equation \ref{lt7} just amounts to observing that
$\underline{v}_{1}(u_{i})=v_{1}(u_{i})=1$ since $u_{i}\in\mathcal{O}_{v_{1}%
}^{\times}$. Thus,%
\begin{equation}
\det%
\begin{pmatrix}
\underline{v}_{1}(\pi) & \underline{v}_{2}(\pi) & \cdots & \underline{v}%
_{n}(\pi)\\
\underline{v}_{1}(u_{2}) & \underline{v}_{2}(u_{2}) &  & \underline{v}%
_{n}(u_{2})\\
\vdots &  & \ddots & \vdots\\
\underline{v}_{1}(u_{n}) & \cdots &  & \underline{v}_{n}(u_{n})
\end{pmatrix}
=\det%
\begin{pmatrix}
\underline{v}_{2}(u_{2}) &  & \underline{v}_{n}(u_{2})\\
& \ddots & \vdots\\
\underline{v}_{2}(u_{n}) & \cdots & \underline{v}_{n}(u_{n})
\end{pmatrix}
\text{.} \label{lt7a}%
\end{equation}
We may now use that the residue field $k_{1}$ (as in Diagram \ref{lt4b}) is an
$(n-1)$-DVF. Since $u_{2},\ldots,u_{n}\in\mathcal{O}_{v_{1}}^{\times}$ the
rank $(n-1)$ valuation $\underline{v}_{[k_{1}]}$ of $\overline{u_{i}}$ with
respect to $k_{1}$ agrees with the last $n-1$ entries of the rank $n$
valuation $\underline{v}$ which appears on the right in Equation \ref{lt7a}.
To see this, note that if we start the inductive definition of the valuation
vector in Equation \ref{lt6}, its second term is%
\[
f_{1}:=u_{i}\qquad\qquad\qquad f_{2}:=\overline{u_{i}\pi_{p}^{-v_{1}(u_{i})}%
}=\overline{u_{i}}%
\]
and then the next inductive steps towards $f_{3},f_{4},\ldots$ agree with
whether we perform them with respect to $F$ or with respect to $k_{1}$, just
differing by an indexing shift. By our inductive hypothesis, the proposition
is already proven for all $(n-1)$-DVFs, so we obtain.%
\[
(\partial_{v_{n}}\cdots\partial_{v_{2}})\{\overline{u_{2}},\ldots
,\overline{u_{n}}\}=\det%
\begin{pmatrix}
\underline{v}_{2}(u_{2}) &  & \underline{v}_{n}(u_{2})\\
& \ddots & \vdots\\
\underline{v}_{2}(u_{n}) & \cdots & \underline{v}_{n}(u_{n})
\end{pmatrix}
\text{.}%
\]
Thus, $(\partial_{v_{n}}\cdots\partial_{v_{1}})\{\pi,u_{2},\ldots
,u_{n}\}=(\partial_{v_{n}}\cdots\partial_{v_{2}})\{\overline{u_{2}}%
,\ldots,\overline{u_{n}}\}$ agrees with the right hand side. This finishes the proof.
\end{proof}

\begin{corollary}
\label{cor_IndepUnif}The determinant on the right-hand side is independent of
the choice of uniformizers $\pi_{i}\in\mathcal{O}_{v_{i}}$.
\end{corollary}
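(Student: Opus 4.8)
The plan is to observe that the left-hand side of Equation \ref{lt5} is manifestly independent of the choice of uniformizers, so the equality already established in Proposition \ref{prop_DetFormula} forces the right-hand side to be independent of them as well. No new computation is needed.

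In more detail: the discrete rank one valuations $v_{1},\ldots,v_{n}$ appearing in the $n$-DVF structure (Figure \ref{lt4b}) are part of the datum of the $n$-DVF itself; the uniformizers $\pi_{i}\in\mathcal{O}_{v_{i}}$ enter only when one passes from this structure to the rank $n$ valuation $\underline{v}$ of Definition \ref{def_HRankVal}. Now the symbol $\{f_{1},\ldots,f_{n}\}\in K_{n}^{\operatorname*{M}}(F)$ involves no choice, and each boundary map $\partial_{v_{i}}$ is independent of the choice of a uniformizer for $v_{i}$, as recorded in the paragraph \textbf{C3} just below Equation \ref{lt3b}. Hence the composite $(\partial_{v_{n}}\cdots\partial_{v_{1}})\{f_{1},\ldots,f_{n}\}\in K_{0}^{\operatorname*{M}}(\kappa(v_{n}))=\mathbb{Z}$ does not depend on any of the $\pi_{i}$. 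By Proposition \ref{prop_DetFormula} this integer equals the determinant on the right-hand side of Equation \ref{lt5}, which is therefore also independent of the choice of uniformizers $\pi_{i}\in\mathcal{O}_{v_{i}}$.

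Since the statement follows in a single step from the preceding proposition together with the already-noted uniformizer-independence of $\partial_{v}$, there is no real obstacle. The only point worth checking carefully is the clean separation between the intrinsic data of the $n$-DVF (the valuations $v_{i}$, and hence the maps $\partial_{v_{i}}$) and the auxiliary choice of uniformizers (which enters only the definition of $\underline{v}$, hence only the right-hand side of Equation \ref{lt5}). Once this is granted, the corollary is immediate.
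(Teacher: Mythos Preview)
Your proof is correct and is exactly the argument the paper gives: the left-hand side of Equation \ref{lt5} involves only the boundary maps $\partial_{v_i}$, which are uniformizer-independent, so by Proposition \ref{prop_DetFormula} the determinant on the right must be as well.
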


Corollary \ref{cor_IndepUnif} follows immediately from Proposition
\ref{prop_DetFormula} since the left-hand side is independent of the choice of uniformizers.

\begin{definition}
\label{def_SetG}Let $X/k$ be an irreducible smooth proper variety of dimension
$n$. Let $Y_{\bullet}$ be a flag in $X$ in the sense of Definition
\ref{def_Flag}, not necessarily admissible. Let%
\[
\mathcal{G}(Y_{\bullet})
\]
be the set of all $n$-DVF structures on the function field $k\left(  X\right)
$%
\begin{equation}%
\bfig\node x(0,1200)[{k\left( X\right)}]
\node y(0,900)[\mathcal{O}_{w_1}]
\node z(300,900)[k_1]
\node w(300,600)[\mathcal{O}_{w_2}]
\node u(600,600)[k_2,]
\node v(600,300)[\vdots]
\arrow/{^{(}->}/[y`x;]
\arrow/{->>}/[y`z;]
\arrow/{^{(}->}/[w`z;]
\arrow/{->>}/[w`u;]
\arrow[v`u;]
\efig
\label{lz2}%
\end{equation}
with the following properties:

\begin{enumerate}
\item The first valuation ring $\mathcal{O}_{w_{1}}\subseteq k\left(
X\right)  $ satisfies%
\[
\mathcal{O}_{X,Y_{1}}\subseteq\mathcal{O}_{w_{1}}\subseteq k\left(  X\right)
\]
and is of the following form: The ring $\mathcal{O}_{w_{1}}$ is the
localization of the integral closure of the left ring inside $k\left(
X\right)  $ at any of its maximal ideals.

\item For $i\geq2$, the $i$-th valuation ring in\ Figure \ref{lz2} satisfies%
\[
\mathcal{O}_{Y_{i-1},Y_{i}}\subseteq\mathcal{O}_{w_{i}}\subseteq\kappa
(w_{i-1})
\]
and is of the following form: The ring $\mathcal{O}_{w_{i}}$ is the
localization of the integral closure of the left ring inside $\kappa(w_{i-1})$
at any of its maximal ideals.
\end{enumerate}
\end{definition}

\begin{remark}
\label{rmk_GSetTrivialForAdmissibleFlag}If $Y_{\bullet}$ happens to be an
admissible flag, all the local rings $\mathcal{O}_{Y_{i-1},Y_{i}}$ are
localizations of $\mathcal{O}_{Y_{i-1},Y_{0}}$ and therefore regular. Hence,
in this case the set $\mathcal{G}(Y_{\bullet})$ contains only a single element.
\end{remark}

\begin{remark}
By the finiteness of integral closure under these assumptions (essentially
finite type over a field), it follows that $\mathcal{G}(Y_{\bullet})$ is
always a finite set.
\end{remark}

\begin{example}
Let $C$ be the nodal cubic with equation $y^{2}-x^{2}(x+1)=0$ in
$\mathbb{A}_{\mathbb{C}}^{2}$. Then the flag $Y_{\bullet}:\mathbb{A}%
_{\mathbb{C}}^{2}\supset C\supset(0,0)$ in $\mathbb{A}_{\mathbb{C}}^{2}$ is
not admissible. The set $\mathcal{G}(Y_{\bullet})$ will consist of two
elements. In the usual embedded resolution $R$ of $C\hookrightarrow
\mathbb{A}_{\mathbb{C}}^{2}$, let $\tilde{C}$ be the resolved curve (to get
this resolution, just blow up the origin). Here $\tilde{C}\rightarrow C$ is
the normalization. In the resolution the point $(0,0)$ will have two preimages
$p_{1},p_{2}$. The admissible flags $R\supset\tilde{C}\supset p_{i}$ with
$i=1,2$ in the resolution $R$ give rise to the $2$-DVF structures on $k\left(
X\right)  $ in $\mathcal{G}(Y_{\bullet})$. This is probably the simplest
example with cardinality $\#\mathcal{G}(Y_{\bullet})>1$.
\end{example}

\subsection{Global computations}

\begin{theorem}
\label{thm_MainFormula}Let $X/k$ be an irreducible smooth projective variety
of dimension $n$. Let $\mathfrak{U}=(U_{\alpha})_{\alpha\in I}$ be a finite
open cover in the Zariski topology. Suppose%
\begin{equation}
X=\overset{\cdot}{\bigcup_{\alpha\in I}}\Sigma_{\alpha}\qquad\text{with}%
\qquad\Sigma_{\alpha}\subseteq U_{\alpha} \label{ltt0b}%
\end{equation}
is a disjoint decomposition as a set. Let $D$ be an ample divisor and suppose
that $\mathcal{O}_{X}(D)$ is locally given by the \v{C}ech cocycle
$(h_{\alpha})_{\alpha\in I}$ in $\check{H}^{1}(\mathfrak{U},\mathcal{K}%
_{X}^{\times}/\mathcal{O}_{X}^{\times})$. Let $\underline{v}:k\left(
X\right)  ^{\times}\rightarrow\mathbb{Z}_{\operatorname*{lex}}^{n}$ be a top
rank valuation. Then the volume of the Newton--Okounkov body is given by the
formula%
\begin{align*}
\operatorname*{Vol}\Delta_{\underline{v}}(D)  &  =\sum_{Y_{\bullet}}%
\sum_{(w_{n},\ldots,w_{1})\in\mathcal{G}(Y_{\bullet})}\sum_{c=0}^{n}%
(-1)^{c}[k\left(  w_{n}\right)  :k]\\
&  \qquad\frac{1}{n!}\det%
\begin{pmatrix}
\underline{w}_{1}(h_{\alpha(Y_{0})}) & \cdots & \widehat{\underline{w}%
_{1}(h_{\alpha(Y_{c})})} & \cdots & \underline{w}_{1}(h_{\alpha(Y_{n})})\\
\underline{w}_{2}(h_{\alpha(Y_{0})}) & \ddots &  &  & \underline{w}%
_{2}(h_{\alpha(Y_{n})})\\
\vdots &  &  &  & \vdots\\
\underline{w}_{n}(h_{\alpha(Y_{0})}) & \cdots & \widehat{\underline{w}%
_{n}(h_{\alpha(Y_{c})})} & \cdots & \underline{w}_{n}(h_{\alpha(Y_{n})})
\end{pmatrix}
\text{,}%
\end{align*}
where

\begin{enumerate}
\item $Y_{\bullet}$ runs through all flags in $X$, but only finitely many will
contribute a non-zero summand,

\item the finite set $\mathcal{G}(Y_{\bullet})$ is the one of Definition
\ref{def_SetG},

\item $\alpha(Y)\in I$ denotes the unique index such that the generic point of
$Y$ is contained in $\Sigma_{\alpha}$ of Equation \ref{ltt0b},
\end{enumerate}
\end{theorem}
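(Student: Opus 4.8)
The plan is to express $\operatorname{Vol}\Delta_{\underline v}(D)$ as the top self-intersection number $\tfrac{1}{n!}(D^n)$ and then feed this into the intersection formula of Theorem \ref{thm_IntersectFormula}. First I would recall that for an ample divisor $D$ the volume of the Newton--Okounkov body satisfies $\operatorname{Vol}\Delta_{\underline v}(D)=\tfrac{1}{n!}(D^n)=\tfrac{1}{n!}\int_X c_1(\mathcal O_X(D))^n$; this is the Lazarsfeld--Musta\c t\u a characterization cited in the introduction, and in the ample case $(D^n)>0$ so there is no subtlety. Thus the left-hand side equals $\tfrac{1}{n!}$ times $\int_X c_1(L)\cdots c_1(L)$ with all $L_i=L:=\mathcal O_X(D)$.

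Next I would apply Theorem \ref{thm_IntersectFormula} with all $L_p=L$. The \v{C}ech $1$-cocycle for $L$ with respect to $\mathfrak U$ is $(h_{\alpha}/h_{\beta})_{\alpha,\beta}$, since $(h_\alpha)$ is the given $0$-cochain in $\mathcal K_X^\times/\mathcal O_X^\times$ trivializing $L$ (its coboundary in $\mathcal O_X^\times$ is $f_{\alpha\beta}=h_\alpha h_\beta^{-1}$). So the intersection number becomes
\[
\int_X c_1(L)^n=\sum_{Y_\bullet}[k(Y_n):k]\,\partial^{Y_{n-1}}_{Y_n}\cdots\partial^{Y_0}_{Y_1}\bigl\{\tfrac{h_{\alpha(Y_0)}}{h_{\alpha(Y_1)}},\tfrac{h_{\alpha(Y_1)}}{h_{\alpha(Y_2)}},\ldots,\tfrac{h_{\alpha(Y_{n-1})}}{h_{\alpha(Y_n)}}\bigr\}.
\]
Now I would unpack each iterated boundary symbol $\partial^{Y_{n-1}}_{Y_n}\cdots\partial^{Y_0}_{Y_1}$. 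Each $\partial^{Y_{i-1}}_{Y_i}$ is, by Definition \ref{def_BdyBetweenPoints}, a sum over the discrete valuations $w$ extending along the normalization of $\mathcal O_{Y_{i-1},Y_i}$, composed with norm maps $N_{\kappa(w)/k(Y_i)}$. Iterating this down the flag and tracking the compositions of norms (using functoriality of norms in towers, Equation \ref{l_NormInTowers}, and compatibility of norms with boundary maps, Proposition \ref{prop_NormCompat}), the composite of all the $\partial$'s rewrites as a sum over exactly the $n$-DVF structures in $\mathcal G(Y_\bullet)$: each choice of maximal ideal at each normalization step picks out one element $(w_n,\ldots,w_1)\in\mathcal G(Y_\bullet)$, with an overall degree factor $[k(w_n):k]$ absorbing $[k(Y_n):k]$ together with all the residue-degree contributions of the norm maps. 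For a fixed such $n$-DVF, what remains is the plain iterated rank-one boundary $\partial_{w_n}\cdots\partial_{w_1}$ applied to a Milnor symbol whose $p$-th slot is the ratio $h_{\alpha(Y_{p-1})}/h_{\alpha(Y_p)}$.

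At that point Proposition \ref{prop_DetFormula} applies directly: $\partial_{w_n}\cdots\partial_{w_1}\{g_1,\ldots,g_n\}=\det[\underline w_j(g_i)]$. Expanding $\underline w_j(h_{\alpha(Y_{p-1})}/h_{\alpha(Y_p)})=\underline w_j(h_{\alpha(Y_{p-1})})-\underline w_j(h_{\alpha(Y_p)})$ by additivity of the valuation components, the matrix has $p$-th row equal to the difference of two of the vectors $\underline w(h_{\alpha(Y_0)}),\ldots,\underline w(h_{\alpha(Y_n)})$. A standard multilinear expansion of such a "difference matrix" (row $p$ is $v_{p-1}-v_p$ where $v_i:=\underline w(h_{\alpha(Y_i)})$) collapses, by the vanishing of determinants with a repeated row, to the alternating sum $\sum_{c=0}^n(-1)^c\det[\,v_0,\ldots,\widehat{v_c},\ldots,v_n\,]$ of the $n\times n$ minors obtained by deleting one of the $n+1$ vectors — which is precisely the $\sum_{c=0}^n(-1)^c$ appearing in the statement, each minor being $n!$ times the signed volume of $\operatorname{simplex}\langle v_0,\ldots,\widehat{v_c},\ldots,v_n\rangle$. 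Dividing by $n!$ matches the claimed formula. The finiteness of the contributing flags is inherited from the last clause of Theorem \ref{thm_IntersectFormula}, and finiteness of each $\mathcal G(Y_\bullet)$ from the remark after Definition \ref{def_SetG}.

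The main obstacle is the bookkeeping in the third step: showing that the iterated composite of the "boundary-between-points" maps $\partial^{Y_{i-1}}_{Y_i}$ — each of which is itself a sum of (rank-one boundary)$\,\circ\,$(norm) over branches of a normalization — reorganizes cleanly into $\sum_{(w_n,\ldots,w_1)\in\mathcal G(Y_\bullet)}[k(w_n):k]\,\partial_{w_n}\cdots\partial_{w_1}$ with the $[k(Y_n):k]$ prefactor correctly reabsorbed. This requires a careful induction down the flag, at each stage applying Proposition \ref{prop_NormCompat} to commute the accumulated norm past the next boundary map and the tower law \ref{l_NormInTowers} to compose norms, and checking that the degree $[k(w_n):k]$ is exactly the product $[k(Y_n):k]$ with all the intermediate residue-degree factors $[\kappa(w_i):\kappa(w_{i-1})]$ that the norm maps contribute in degree zero. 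Once this identification of indexing sets and degree factors is in place, the determinant evaluation and the "difference matrix" expansion are purely formal.
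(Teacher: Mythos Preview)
Your proposal is correct and follows essentially the same route as the paper's proof: reduce $\operatorname{Vol}\Delta_{\underline v}(D)$ to $\tfrac{1}{n!}(D^n)$ via Lazarsfeld--Musta\c{t}\u{a} and asymptotic Riemann--Roch, apply Theorem~\ref{thm_IntersectFormula}, reorganize the iterated $\partial^{Y_{i-1}}_{Y_i}$ maps into $\sum_{\mathcal G(Y_\bullet)}[k(w_n):k]\,\partial_{w_n}\cdots\partial_{w_1}$ using Proposition~\ref{prop_NormCompat} and the tower law, then invoke Proposition~\ref{prop_DetFormula} and expand the ratios $h_{\alpha(Y_{p-1})}/h_{\alpha(Y_p)}$ into the alternating sum over~$c$. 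The only cosmetic difference is that the paper performs the alternating-sum expansion in Milnor $K$-theory (Elaboration~\ref{elab1}) \emph{before} applying Proposition~\ref{prop_DetFormula}, whereas you do it \emph{after} as a linear-algebra identity on the difference matrix; the two orderings are equivalent, and you have correctly flagged the norm/boundary bookkeeping (the paper's Steps~3 and~5) as the one place requiring genuine care.
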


It may or may not happen that the flag $Y_{\bullet}^{\prime}$ appears among
those $Y_{\bullet}$ with a non-zero contribution to the sum. Since we can
change $Y_{\bullet}^{\prime}$ without changing the \v{C}ech cocycle, one can
easily produce an example for either situation by adapting $Y_{\bullet
}^{\prime}$ as needed.

\begin{proof}
(Step 1) We begin with Theorem A of \cite{MR2571958}, i.e.%
\begin{equation}
\operatorname*{Vol}\Delta_{\underline{v}}(\mathcal{O}_{X}(D))=\lim
_{m\rightarrow\infty}\frac{\dim_{k}H^{0}(X,\mathcal{O}_{X}(mD))}{m^{n}%
}\text{.} \label{ltt5a}%
\end{equation}
In fact, the cited theorem only shows this in the special case where
$\underline{v}$ comes from an admissible flag in the sense of Definition
\ref{def_Flag}. However, it is true for general $\underline{v}$. This is
explained in \cite[Remark 2.12]{MR3694973}: It suffices to work with a
birational modification%
\[
f:\tilde{X}\longrightarrow X\text{,}%
\]
where $\underline{v}$ comes from an admissible flag. Such an $\tilde{X}$
exists by a type of embedded resolution along the centers of the valuation,
see \cite[Theorem 2.9]{MR3694973} for the precise result. Finally, the term on
the right in Equation \ref{ltt5a} is a birational invariant, i.e.
$H^{0}(\tilde{X},f^{\ast}\mathcal{O}_{X}(mD))$ has the same growth,
\cite[Proposition 2.2.43]{MR2095471}, and here Theorem A of \cite{MR2571958}
applies verbatim. Note that the definition of $\Delta_{\underline{v}}$ only
depends on the valuation on the function field, so it does not see the change
of the birational model. As $D$ is ample, the asymptotic Riemann--Roch theorem
implies that%
\[
\chi(\mathcal{O}_{X}(mD))=\frac{m^{n}}{n!}\int_{X}c_{1}(\mathcal{O}%
_{X}(D))^{n}+\mathsf{O}(m^{n-1})\text{,}%
\]
see \cite[Theorem 1.1.24]{MR2095471}. By Serre vanishing for ample line
bundles for $m\gg0$ sufficiently big, the higher cohomology groups in the
Euler characteristic vanish, so in the limit in Equation \ref{ltt5a}
simplifies to%
\[
\operatorname*{Vol}\Delta_{\underline{v}}(\mathcal{O}_{X}(D))=\lim
_{m\rightarrow\infty}\frac{\chi(\mathcal{O}_{X}(mD))}{m^{n}}=\frac{1}{n!}%
\int_{X}c_{1}(\mathcal{O}_{X}(D))^{n}\text{.}%
\]
By the way, instead of the above argument for birational invariance of the
volume, we could alternatively compute the intersection number on the right in
the modification $\tilde{X}$; this circumvents the admissible flag problem in
a different way. The reader may now forget about $\underline{v}$ and
$\tilde{X}$; they will not play a r\^{o}le in the rest of the proof.\newline%
(Step 2) Next, under the connecting map $\delta$ in%
\begin{equation}
H^{0}(X,\mathcal{K}_{X}^{\times})\longrightarrow\underset{(h_{\alpha}%
)_{\alpha}}{H^{0}(X,\mathcal{K}_{X}^{\times}/\mathcal{O}_{X}^{\times}%
)}\overset{\delta}{\longrightarrow}\underset{(f_{\alpha,\beta})_{\alpha,\beta
}}{H^{1}(X,\mathcal{O}_{X})}\longrightarrow0 \label{ltt0a}%
\end{equation}
we get a \v{C}ech $1$-cocycle representative for the isomorphism class of the
invertible sheaf $\mathcal{O}_{X}(D)$ attached to the Cartier divisor; we
write $(f_{\alpha,\beta})_{\alpha,\beta\in I}$. We use Theorem
\ref{thm_IntersectFormula} to compute the top self-intersection number of $D$.
We can use the same open cover $\mathfrak{U}$ and simply let $f_{\alpha,\beta
}^{p}:=f_{\alpha,\beta}$ for $p=1,\ldots,n$, indifferently the value of $p$.
We get $\frac{1}{n!}\int_{X}c_{1}(\mathcal{O}_{X}(D))^{n}=$%
\begin{equation}
=\frac{1}{n!}\sum_{Y_{\bullet}}[k\left(  Y_{n}\right)  :k]\left(
\partial_{Y_{n}}^{Y_{n-1}}\cdots\partial_{Y_{1}}^{Y_{0}}\right)
\{f_{\alpha(Y_{0})\alpha(Y_{1})},f_{\alpha(Y_{1})\alpha(Y_{2})},\ldots
,f_{\alpha(Y_{n-1})\alpha(Y_{n})}\}\text{,} \label{ltt1}%
\end{equation}
where the meaning of $Y_{\bullet}$, $\alpha(-)$ is as in the cited theorem.
Each $\partial_{Y_{p+1}}^{Y_{p}}$ by its construction is a sum%
\[
\partial_{Y_{p+1}}^{Y_{p}}=\sum_{i}N_{\kappa(v_{i})/k\left(  Y_{p+1}\right)
}\circ\partial_{v_{i}}%
\]
for a finite number of discrete valuations $v_{i}$ on $k(Y_{p})$; we refer to
\S \ref{sect_BdyMapsBetweenPoints} where we had recalled this in detail. Since
the relevant valuations and the indexing set for $i$ both depend on $Y_{p}$,
let us store this extra data in the notation and rewrite the above expression
as%
\[
\partial_{Y_{p+1}}^{Y_{p}}=\sum_{i_{p}\in J^{(p)}}N_{\kappa(v_{i_{p}}%
^{(p)})/k\left(  Y_{p+1}\right)  }\circ\partial_{v_{i_{p}}^{(p)}}\text{,}%
\]
where $J^{(p)}$ is a finite index set. We apologize for the heavy notation, we
shall not employ it for long. The term $\partial_{Y_{n}}^{Y_{n-1}}%
\cdots\partial_{Y_{1}}^{Y_{0}}$ in Equation \ref{ltt1} unravels as
\begin{align}
&  \partial_{Y_{n}}^{Y_{n-1}}\cdots\partial_{Y_{1}}^{Y_{0}}=\sum_{i_{n-1}\in
J^{(n-1)}}\cdots\sum_{i_{1}\in J^{(1)}}\sum_{i_{0}\in J^{(0)}}\nonumber\\
&  \qquad\qquad N_{\kappa(v_{i_{n-1}}^{(n-1)})/k\left(  Y_{n}\right)
}\partial_{v_{i_{n-1}}^{(n-1)}}\circ\cdots\circ N_{\kappa(v_{i_{1}}%
^{(1)})/k\left(  Y_{2}\right)  }\partial_{v_{i_{1}}^{(1)}}\circ N_{\kappa
(v_{i_{0}}^{(0)})/k\left(  Y_{1}\right)  }\partial_{v_{i_{0}}^{(0)}}
\label{ltt4}%
\end{align}
or perhaps more briefly%
\[
=\sum_{i_{n-1}\in J^{(n-1)}}\cdots\sum_{i_{1}\in J^{(1)}}\sum_{i_{0}\in
J^{(0)}}\prod_{p=0}^{n-1}N_{\kappa(v_{i_{p}}^{(p)})/k\left(  Y_{p+1}\right)
}\circ\partial_{v_{i_{p}}^{(p)}}%
\]
if we agree to read the product as the (non-commutative) concatenation of
morphisms and unravel it from right to left as $p$ increases. In fact, this is
a little better than being plainly non-commutative since the maps originate
and end in different objects, so in a certain sense no actual ambiguity is
possible.\newline(Step 3)\ The expression in Equation \ref{ltt4} may,
structurally, be summarized as%
\begin{equation}
\underset{n\text{ pairs}}{\underbrace{N\partial N\partial\cdots N\partial}%
}=N\underset{(n-1)\text{ pairs}}{\underbrace{(\partial N)\cdots(\partial
N)(\partial N)}}\partial\text{,} \label{ltt5}%
\end{equation}
i.e. an alternating composition of boundary maps and norm maps. Now, perform
inductively starting from the right the following operation: For each pair
$\partial N$ we use Proposition \ref{prop_NormCompat} and write it as%
\begin{equation}
\partial_{v_{i_{p-1}}^{(p-1)}}N_{\kappa(v_{i_{p}}^{(p)})/k\left(
Y_{p+1}\right)  }=\sum_{w}N_{\kappa(w)/\kappa(v_{i_{p-1}})}\partial
_{w}\text{,} \label{luma8b}%
\end{equation}
where the sum on the right runs through the finitely many extensions of the
valuation $v_{i_{p-1}}^{(p-1)}$ to the finite field extension $\kappa
(v_{i_{p}}^{(p)})$. We neglect that $w$ itself of course again depends on what
$p$ we use on the left side. We arrive at an expression of the shape%
\[
N\underset{(n-1)\text{ pairs}}{\underbrace{(\partial N)\cdots(\partial
N)\left.  \underline{(\partial N)}\right.  }}\partial=\sum N(\partial
N)\cdots(\partial N)(\partial N)\left.  \underline{(N\partial)}\right.
\partial
\]
and let us not make precise what the sum on the right side is summing over (it
would be the $w$), and we have underlined the part of the expression where
something has changed. The two consecutive norm maps can be combined to one,
see Equation \ref{l_NormInTowers}. We get%
\[
=\sum N\underset{(n-2)\text{ pairs}}{\underbrace{(\partial N)\cdots(\partial
N)(\partial N)}}\left.  \underline{(\partial)}\right.  \partial\text{,}%
\]
where the underlined expression is what remains from our modifications. The
underbraced term has exactly the same shape as what we had started from in
Equation \ref{ltt5}. Inductively repeat the procedure, working from right to
left. We end up with an expression of the type%
\[
=\left(  \sum\cdots\sum\right)  N\underset{n}{\underbrace{\partial
\partial\cdots\partial}}%
\]
with a single norm map and from each reduction step some finite sum remains.
It corresponds in each application of Proposition \ref{prop_NormCompat} to a
choice of the finitely many extensions of the valuation in question. It is
messy to spell this out since in each of our reduction steps the valuations
change, as we repeatedly switch to a finite field extension. Let us simply
write%
\[
=\sum_{\mathcal{F}(Y_{\bullet})}N\underset{n}{\underbrace{\partial
\partial\cdots\partial}}\text{,}%
\]
where%
\begin{equation}
\mathcal{F}(Y_{\bullet})=\left\{  (w_{n},\ldots,w_{1})\right\}  \label{luma8a}%
\end{equation}
is the finite set of all choices of valuations arising from running the above
procedure for the flag $Y_{\bullet}$. It is a little messy to unravel what the
elements of $\mathcal{F}(Y_{\bullet})$ are in a more explicit fashion (e.g.,
each $w_{i}$ is a rank one discrete valuation, but describing the field it is
defined on is already a little tricky). We will postpone describing
$\mathcal{F}(Y_{\bullet})$ in more detail.\newline(Step 4)\ Return to Equation
\ref{ltt1}. We obtain $\frac{1}{n!}\int_{X}c_{1}(\mathcal{O}_{X}(D))^{n}=$%
\begin{align*}
&  \frac{1}{n!}\sum_{Y_{\bullet}}[k\left(  Y_{n}\right)  :k]\\
&  \qquad\sum_{(w_{n},\ldots,w_{1})\in\mathcal{F}(Y_{\bullet})}N_{\kappa
(w_{n})/k\left(  Y_{n}\right)  }\partial_{w_{n}}\cdots\partial_{w_{2}}%
\partial_{w_{1}}\{f_{\alpha(Y_{0})\alpha(Y_{1})},f_{\alpha(Y_{1})\alpha
(Y_{2})},\ldots,f_{\alpha(Y_{n-1})\alpha(Y_{n})}\}
\end{align*}
and using Equation \ref{l_NormInTowers} as well as using that on
$K_{0}^{\operatorname*{M}}$ the norm is multiplication with the field
extension degree (see Equation \ref{lt2a}) once more, this simplifies to%
\begin{equation}
\frac{1}{n!}\sum_{Y_{\bullet}}\sum_{(w_{n},\ldots,w_{1})\in\mathcal{F}%
(Y_{\bullet})}[k\left(  w_{n}\right)  :k]\partial_{w_{n}}\cdots\partial
_{w_{1}}\{f_{\alpha(Y_{0})\alpha(Y_{1})},f_{\alpha(Y_{1})\alpha(Y_{2})}%
,\ldots,f_{\alpha(Y_{n-1})\alpha(Y_{n})}\}\text{.} \label{luma3aa}%
\end{equation}
We can rewrite this in terms of the Cartier divisor \v{C}ech representatives
$(h_{\alpha})_{\alpha}$ of Equation \ref{ltt0a}, giving%
\begin{equation}
\{f_{\alpha(Y_{0})\alpha(Y_{1})},\ldots,f_{\alpha(Y_{n-1})\alpha(Y_{n}%
)}\}=\sum_{c=0}^{n}(-1)^{c}\{h_{\alpha(Y_{0})},\ldots,\widehat{h_{\alpha
(Y_{c})}},\ldots,h_{\alpha(Y_{n})}\} \label{luma3a}%
\end{equation}
(we refer to Elaboration \ref{elab1} after the proof for details if this was
too quick). Thus,%
\[
=\frac{1}{n!}\sum_{Y_{\bullet}}\sum_{(w_{n},\ldots,w_{1})\in\mathcal{F}%
(Y_{\bullet})}\sum_{c=0}^{n}(-1)^{c}[k\left(  w_{n}\right)  :k]\partial
_{w_{n}}\cdots\partial_{w_{1}}\{h_{\alpha(Y_{0})},\ldots,\widehat
{h_{\alpha(Y_{c})}},\ldots,h_{\alpha(Y_{n})}\}
\]
Finally, by Proposition \ref{prop_DetFormula} and switching to the transpose
matrix (which does not affect the determinant), we obtain%
\begin{align*}
&  =\sum_{Y_{\bullet}}\sum_{(w_{n},\ldots,w_{1})\in\mathcal{F}(Y_{\bullet}%
)}\sum_{c=0}^{n}(-1)^{c}[k\left(  w_{n}\right)  :k]\\
&  \qquad\frac{1}{n!}\det%
\begin{pmatrix}
\underline{w}_{1}(h_{\alpha(Y_{0})}) & \cdots & \widehat{\underline{w}%
_{1}(h_{\alpha(Y_{c})})} & \cdots & \underline{w}_{1}(h_{\alpha(Y_{n})})\\
\underline{w}_{2}(h_{\alpha(Y_{0})}) & \ddots &  &  & \underline{w}%
_{2}(h_{\alpha(Y_{n})})\\
\vdots &  &  &  & \vdots\\
\underline{w}_{n}(h_{\alpha(Y_{0})}) & \cdots & \widehat{\underline{w}%
_{n}(h_{\alpha(Y_{c})})} & \cdots & \underline{w}_{n}(h_{\alpha(Y_{n})})
\end{pmatrix}
\text{,}%
\end{align*}
where $\underline{w}$ is the rank $n$ valuation coming from the $n$-DVF
structure induced from the valuations $w_{n},w_{n-1},\ldots,w_{1}$%
.\newline(Step 5) It remains to identify the mysterious set $\mathcal{F}%
(Y_{\bullet})$ from Equation \ref{luma8a}. We claim that $\mathcal{F}%
(Y_{\bullet})=\mathcal{G}(Y_{\bullet})$. Definition \ref{def_SetG} of
$\mathcal{G}(Y_{\bullet})$ is inductive in nature: It begins with choosing
$w_{1}$ and then going down to pick $w_{i}$ on the basis of $w_{i-1}$.
However, the choice of the valuations in $\mathcal{F}(Y_{\bullet})$ in the
above proof is also inductive in the same way. Hence, it suffices to show that
the inductive steps match (it is easy to see that the choice of $w_{1}$ is
done in the same way). To this end, we return to Equation \ref{luma8b}, where
we had defined that the next valuation $w_{i}$ \textquotedblleft runs through
the finitely many extensions of the valuation $v_{i_{p-1}}^{(p-1)}$ to the
finite field extension $\kappa(v_{i_{p}}^{(p)})$\textquotedblright. In this
step of the proof this is followed by one more map $\partial$ and one more map
$N$. Check that in the definition of $\partial$ we pick a valuation from the
maximal ideals of the integral closure\footnote{integral closure inside the
field of fractions of the domain} (see \S \ref{sect_BdyMapsBetweenPoints}
where this is carefully explained), and in the norm map, we also pick a
valuation from a maximal ideal of the integral closure\footnote{here it is the
integral closure in a finite extension of the field of fractions} (see
Proposition \ref{prop_NormCompat}, where this originates from). However,
instead of taking these two consecutive integral closures, we may right away
only do the second, giving the same outcome. We arrive at the same conditions
as in Definition \ref{def_SetG}.
\end{proof}

\begin{elab}
\label{elab1}We provide additional details for the computation in Equation
\ref{luma3a}. The exact sequence of Zariski sheaves $0\rightarrow
\mathcal{O}_{X}^{\times}\rightarrow\mathcal{K}_{X}^{\times}\rightarrow
\mathcal{K}_{X}^{\times}/\mathcal{O}_{X}^{\times}\rightarrow0$ induces the
connecting homomorphism $\delta$ in
\[
H^{0}(X,\mathcal{K}_{X}^{\times}/\mathcal{O}_{X}^{\times})\overset{\delta
}{\longrightarrow}H^{1}(X,\mathcal{O}_{X}^{\times})\text{.}%
\]
It sends $(h_{\alpha})_{\alpha}$ to $(f_{\alpha\beta})_{\alpha,\beta}$ with
$f_{\alpha\beta}:=h_{\beta}/h_{\alpha}^{-1}$, and if we use \v{C}ech
representatives on some open cover $\mathfrak{U}$ for $h$, we can still use
the same cover for $f$. We compute%
\begin{align*}
\{f_{\alpha(Y_{0})\alpha(Y_{1})},\ldots,f_{\alpha(Y_{n-1})\alpha(Y_{n})}\}  &
=\left\{  \frac{h_{\alpha(Y_{1})}}{h_{\alpha(Y_{0})}},\ldots,\frac
{h_{\alpha(Y_{n})}}{h_{\alpha(Y_{n-1})}}\right\} \\
&  =-\sum_{s_{1}=0,1}(-1)^{d_{1}}h_{\alpha(Y_{s_{1}})}\left\{  \frac
{h_{\alpha(Y_{2})}}{h_{\alpha(Y_{1})}},\ldots,\frac{h_{\alpha(Y_{n})}%
}{h_{\alpha(Y_{n-1})}}\right\}
\end{align*}
and inductively repeating the idea of the last term manipulation,%
\[
=(-1)^{n}\sum_{s_{1},\ldots,s_{n}\in\{0,1\}}(-1)^{d_{1}+\cdots+d_{n}%
}\{h_{\alpha(Y_{s_{1}})},h_{\alpha(Y_{s_{2}+1})},\ldots,h_{\alpha
(Y_{s_{n}+n-1})}\}\text{.}%
\]
As soon as two indices agree, the term is $2$-torsion by Lemma
\ref{lemma_GradedCommutative}, so cannot map to a non-trivial element in the
integers. Thus, only the selections of $n$ distinct consecutive elements
$s_{1},s_{2}+1,\ldots,s_{n}+n-1$ of $0<1<\cdots<n$ among these indices is
possibly non-zero. So it suffices if we only consider sequences of the shape
$(0,\ldots,0,1,1,\ldots,1)$ among the $s_{1},\ldots,s_{n}$. For $(1,1,\ldots
,1)$ we get the total sign $(-1)^{n}\cdot(-1)^{n}=+1$ and each time we
increase the number of leading zeros, the sign changes. Hence, we obtain%
\[
=\sum_{c=0}^{n}(-1)^{c}\{h_{\alpha(Y_{0})},\ldots,\widehat{h_{\alpha(Y_{c})}%
},\ldots,h_{\alpha(Y_{n})}\}
\]
as required.
\end{elab}

\begin{problem}
In our example in \S \ref{sect_Example} only two flags contribute. Under what
circumstances does only \emph{one} single flag $Y_{\bullet}$ contribute a
non-zero summand in the right side of the equation of Theorem
\ref{thm_MainFormula}? Can one give a general criterion? If this flag is
admissible, $\mathcal{G}(Y_{\bullet})$ contains only one element (Remark
\ref{rmk_GSetTrivialForAdmissibleFlag}) and one can choose $\underline{v}$ to
be this top rank valuation, too. Then the theorem gives an equation only
involving one single valuation on both sides of the equation. Can one
understand the equality in this case in terms of convex geometry?
\end{problem}

\subsection{Proof of the main result}

\begin{theorem}
\label{thm_B}Let $X/k$ be an irreducible smooth projective variety of
dimension $n$.

\begin{enumerate}
\item Let $\underline{v}:k\left(  X\right)  ^{\times}\rightarrow
\mathbb{Z}_{\operatorname*{lex}}^{n}$ be a top rank valuation.

\item Let $D$ be an ample divisor.

\item Suppose the graded semigroup $\Gamma_{\underline{v}}(D)$ is finitely generated.

\item Choose some $m\geq1$ sufficiently big so that $mD$ is very ample. Let
$(U_{\alpha},f_{\alpha})_{\alpha\in I}$ be the local trivialization of $mD$ as
a Cartier divisor on a finite open cover $(U_{\alpha})_{\alpha\in I}$ produced
by Lemma \ref{Lemma_Triv}.

\item Suppose%
\begin{equation}
X=\overset{\cdot}{\bigcup_{\alpha\in I}}\Sigma_{\alpha}\qquad\text{with}%
\qquad\Sigma_{\alpha}\subseteq U_{\alpha} \label{l_ThmB_DisjDecomp}%
\end{equation}
is a disjoint decomposition as a set.
\end{enumerate}

Then%
\begin{align*}
&  \operatorname*{Vol}\left(  \underset{\alpha\in I}{\mathrm{convex}\text{
}\mathrm{hull}}\left(  \frac{1}{m}\underline{v}(h_{\alpha})\right)  \right)
=\\
&  \qquad\qquad\qquad\sum_{Y_{\bullet}}\sum_{(w_{n},\ldots,w_{1}%
)\in\mathcal{G}(Y_{\bullet})}\sum_{c=0}^{n}(-1)^{c}[k\left(  w_{n}\right)
:k]\\
&  \qquad\qquad\qquad\qquad\qquad\operatorname*{Vol}\left.  \mathrm{simplex}%
\left\langle \underline{w}(h_{\alpha(Y_{0})}),\ldots\widehat{\underline
{w}(h_{\alpha(Y_{c})})}\ldots,\underline{w}(h_{\alpha(Y_{n})})\right\rangle
\right.
\end{align*}
and both values are the volume of the Newton--Okounkov body
$\operatorname*{Vol}\Delta_{Y_{\bullet}}(D)$. The first sum runs over all
flags $Y_{\bullet}$ in the sense of Definition \ref{def_Flag} and for each
flag $\mathcal{G}(Y_{\bullet})$ is as in Definition \ref{def_SetG}.
\end{theorem}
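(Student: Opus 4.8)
The plan is to deduce the theorem by combining two results that are already available: Lemma \ref{Lemma_Triv}, which pins down the left-hand side, and Theorem \ref{thm_MainFormula}, which evaluates the volume in question. First I would apply Lemma \ref{Lemma_Triv} (which strengthens Proposition \ref{prop_TrivPolytope}) to the given top rank valuation $\underline{v}$ and the ample divisor $D$. Because $\Gamma_{\underline{v}}(D)$ is finitely generated, it yields a finite Zariski cover $\mathfrak{U}=(U_{\alpha})_{\alpha\in I}$ together with sections $h_{\alpha}\in H^{0}(X,\mathcal{O}_{X}(mD))$ whose restrictions $(h_{\alpha})_{\alpha}$ constitute a Cartier divisor representative of $mD$ as in Proposition \ref{prop_TrivPolytope}(2), and --- this is the point of passing to Lemma \ref{Lemma_Triv} rather than Proposition \ref{prop_TrivPolytope} --- the convex hull of the vectors $\frac{1}{m}\underline{v}(h_{\alpha})$ is \emph{all} of $\Delta_{\underline{v}}(D)$. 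Thus the left-hand side of the asserted formula equals $\operatorname*{Vol}\Delta_{\underline{v}}(D)$.

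Next I would invoke Theorem \ref{thm_MainFormula} with the divisor $mD$ (which is very ample, hence ample), the cover $\mathfrak{U}$, the disjoint decomposition $X=\overset{\cdot}{\bigcup_{\alpha\in I}}\Sigma_{\alpha}$ of hypothesis (5), the valuation $\underline{v}$, and the Cartier representative $(h_{\alpha})_{\alpha}$. This expresses $\operatorname*{Vol}\Delta_{\underline{v}}(mD)$ as exactly the triple sum $\sum_{Y_{\bullet}}\sum_{(w_{n},\ldots,w_{1})\in\mathcal{G}(Y_{\bullet})}\sum_{c=0}^{n}(-1)^{c}[k(w_{n}):k]\cdot\frac{1}{n!}\det(\cdots)$ occurring on the right; the finite set $\mathcal{G}(Y_{\bullet})$ of Definition \ref{def_SetG} and the assignment $\alpha(Y)$ depend only on $X$, on $Y_{\bullet}$, and on the decomposition $(\Sigma_{\alpha})$, so they coincide verbatim with those in the claim. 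Using Equation \ref{luma9} I would then rewrite each term $\frac{1}{n!}\det(\cdots)$ as the signed volume of an $n$-simplex whose vertices are the columns of the matrix, converting the right-hand side into the stated $\mathbb{Z}$-linear combination of simplex volumes.

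It then remains to pass from $mD$ back to $D$. I would use that the Newton--Okounkov body is homogeneous of degree one, $\Delta_{\underline{v}}(mD)=m\cdot\Delta_{\underline{v}}(D)$ (immediate from Definition \ref{def_Okounkov}), so that $\operatorname*{Vol}\Delta_{\underline{v}}(mD)=m^{n}\operatorname*{Vol}\Delta_{\underline{v}}(D)$. Dividing the identity of Theorem \ref{thm_MainFormula} by $m^{n}$ and distributing the scalar $\frac{1}{m}$ across the $n$ columns of each $n\times n$ determinant sends every vertex $\underline{w}(h_{\alpha(Y_{j})})$ to $\frac{1}{m}\underline{w}(h_{\alpha(Y_{j})})$, which is exactly the normalization already present on the left-hand side. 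Combined with the first step this proves the equality of the two displayed quantities, and their common value is $\operatorname*{Vol}\Delta_{\underline{v}}(D)$. Finally, by Lazarsfeld--Musta\c{t}\u{a} \cite{MR2571958} the volume of the Newton--Okounkov body is independent of the top rank valuation used to define it, so this common value equals $\operatorname*{Vol}\Delta_{Y_{\bullet}}(D)$ for every flag $Y_{\bullet}$.

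I do not anticipate a real obstacle: all of the substance --- Bloch's \v{C}ech description of intersection numbers (Theorem \ref{thm_IntersectFormula}), the determinant evaluation (Proposition \ref{prop_DetFormula}), and the identification $\mathcal{F}(Y_{\bullet})=\mathcal{G}(Y_{\bullet})$ of the set of valuations that intervenes --- is already carried out inside the proof of Theorem \ref{thm_MainFormula}. The single point that requires genuine attention, and which I would handle explicitly rather than fold silently into the computation, is the scaling bookkeeping just described: one must track the degree $m$ for which the $h_{\alpha}$ are honest sections of $\mathcal{O}_{X}(mD)$ against the factor $\frac{1}{m}$ normalizing the valuation vectors, so that the homogeneity factor $m^{-n}$ is absorbed correctly, one power of $\frac{1}{m}$ per column of each determinant.
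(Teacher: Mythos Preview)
Your approach is essentially the paper's: invoke Lemma \ref{Lemma_Triv} to identify the left-hand side with $\operatorname*{Vol}\Delta_{\underline{v}}(D)$, apply Theorem \ref{thm_MainFormula}, and reinterpret each $\tfrac{1}{n!}\det(\cdots)$ as the signed volume of an $n$-simplex via Equation \ref{luma9}. The paper's proof does exactly this, only more tersely and without spelling out the $m$-bookkeeping.

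One small point on that bookkeeping, since you single it out as the place needing care. Your scaling step divides by $m^{n}$ and distributes $\tfrac{1}{m}$ into each column, producing simplex vertices $\tfrac{1}{m}\underline{w}(h_{\alpha(Y_{j})})$. But the displayed right-hand side in Theorem \ref{thm_B} has the vertices $\underline{w}(h_{\alpha(Y_{j})})$ \emph{unscaled}. In other words, the right-hand side as written is literally $\operatorname*{Vol}\Delta_{\underline{v}}(mD)$, not $\operatorname*{Vol}\Delta_{\underline{v}}(D)$; the paper's own proof glosses over this (and the worked example in \S\ref{sect_Example} has $m=1$, so the issue is invisible there). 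So either drop your division by $m^{n}$ and record that both sides equal $\operatorname*{Vol}\Delta_{\underline{v}}(mD)=m^{n}\operatorname*{Vol}\Delta_{\underline{v}}(D)$, or keep your scaling and note that the formula should carry a $\tfrac{1}{m}$ on the right as well. Either way your argument is correct; it is the statement, not your proof, that is ambiguous about the factor $m^{n}$.
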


\begin{remark}
The right-hand side is independent of the choice of $\underline{v}$.
\end{remark}

We point out that by \textquotedblleft volume of the simplex\textquotedblright%
\ we mean the \textit{signed} volume, i.e. if the vectors are in the opposite
orientation to the standard basis of $\mathbb{R}^{n}$, the volume is
accordingly a negative value $-$ see Equations \ref{luma9a} and \ref{luma9}.

\begin{proof}
Use Theorem \ref{thm_MainFormula}. Then express $\operatorname*{Vol}%
\Delta_{\underline{v}}(D)$ using the presentation as a convex polytope coming
from Lemma \ref{Lemma_Triv}. We obtain%
\[
\operatorname*{Vol}\left(  \underset{\alpha\in I}{\mathrm{convex}\text{
}\mathrm{hull}}\left(  \frac{1}{m}\underline{v}(h_{\alpha})\right)  \right)
=\operatorname*{Vol}\Delta_{\underline{v}}(D)
\]
and this in turn equals%
\[
=\sum_{Y_{\bullet}}\sum_{(w_{n},\ldots,w_{1})\in\mathcal{F}(Y_{\bullet})}%
\sum_{c=0}^{n}(-1)^{c}[k\left(  w_{n}\right)  :k]\frac{1}{n!}\det\left(
\left[  \underline{w}_{\ell}(h_{\alpha(Y_{m})})\right]  _{\substack{\ell
=1,\ldots,n\\m=0,\ldots,\widehat{c},\ldots,n}}\right)  \text{,}%
\]
but for $n$ vectors $\underline{x}_{i}\in\mathbb{R}^{n}$, the expression
$\frac{1}{n!}\det\left(  \underline{x}_{1},\ldots,\underline{x}_{n}\right)  $
is the (signed) volume of the (oriented) $n$-simplex in $\mathbb{R}^{n}$
spanned by the vectors $\underline{x}_{1},\ldots,\underline{x}_{n}$. This is
clear: Without the factor $n!$ it is just the volume of the spanned parallelepiped.
\end{proof}

\section{A fully worked out example\label{sect_Example}}

In this section we will present a detailed example demonstrating our main
formula. Our example will depend on three parameters $l\in\mathbb{Z}_{\geq1}$,
$a,b\in\mathbb{Z}$, the first indicates running through an infinite family of
surfaces, while $a,b$ allow us to run through an infinite family of divisors
on them. Hence, in a sense, we discuss a countably infinite set of
examples.\medskip

Consider the Hirzebruch surface $F_{l}$. This is a toric surface, and this is
the viewpoint we shall exploit\footnote{One could also view $F_{l}$ as a
projective bundle over $\mathbb{P}^{1}$, but this perspective seems less
convenient when setting up the open covers of Theorem \ref{thm_B}.}. We will
use the notation of Fulton \cite{MR1234037}. Let $N:=\mathbb{Z}^{2}$. The
polyhedral fan $\Sigma$ in question is the following one.
\begin{equation}%
{\includegraphics[
height=1.257in,
width=1.1884in
]%
{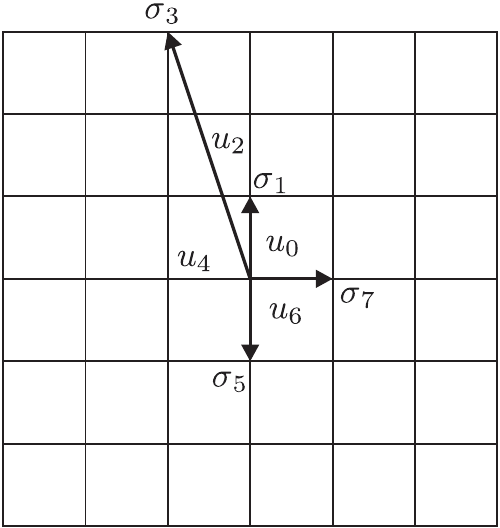}%
}%
\label{lafig1}%
\end{equation}
We write $u_{0},\ldots,u_{6}$ with even indices to denote the $2$-dimensional
cones, $\sigma_{1},\ldots,\sigma_{7}$ with odd indices to denote the
$1$-dimensional cones and $\eta$ will denote the origin. This unusual indexing
is used so that we have%
\begin{equation}
\text{the cone }u_{i}\text{ has the two facets }\sigma_{i-1}\text{ and }%
\sigma_{i+1} \label{lafig01}%
\end{equation}
for all $i$ (tacitly we set $\sigma_{-1}:=\sigma_{7}$). The ray $\sigma_{3}$
ends at the point $(-1,l)$. Write $X(\Sigma)$ for the toric variety attached
to a polyhedral fan $\Sigma$. Write $\Sigma\subseteq N_{\mathbb{R}}$ for the
polyhedral fan of Figure \ref{lafig1}, and in brief $X:=X(\Sigma)$. Our $X$ is
an integral smooth projective toric surface. We have $\operatorname*{Pic}%
(X)\cong\mathbb{Z}\left\langle \sigma_{1},\sigma_{3}\right\rangle $. Consider
the divisor%
\[
D=a\sigma_{1}+b\sigma_{3}\qquad\text{for}\qquad a,b\in\mathbb{Z}\text{.}%
\]
We claim that the divisor $D$ is ample if and only if%
\begin{equation}
a>0\qquad\text{and}\qquad b-la>0\text{.} \label{l_crit_ample_Example}%
\end{equation}
This is not too hard to see, but this computation is also carried out in
\cite[Example 6.1.16]{MR2810322}, use that in the notation loc. cit. our $D$
would be $aD_{4}+(b-la)D_{3}$, giving our claim here.

\subsection{Step 1: Local equations for the Cartier divisor}

We will be aiming towards Theorem \ref{thm_B}, so we need to fix a
trivialization for the line bundle $\mathcal{O}_{X}(D)$ in some open cover. To
this end, we shall use the standard affine opens $U_{i}:=X(u_{i})$ for $i\in
I:=\{0,2,4,6\}$ from the toric theory as our \v{C}ech cover $\mathfrak{U}$ of
$X$. This motivates why we denote the top-dimensional cones by the letter $u$.
Let us compute the Cartier divisor representative of $D\in H^{0}%
(X,\mathcal{K}_{X}^{\times}/\mathcal{O}_{X}^{\times})$. We write
$D=(h_{\alpha})_{\alpha\in I}$, and abstractly $D=\sum d_{i}\sigma_{i}$ for
odd $i$ and $d_{i}\in\mathbb{Z}$ (i.e. $d_{5}=d_{7}=0$). For each $i\in I$
this means that we need to solve the equations%
\begin{equation}
\left\langle h_{i},\sigma_{i-1}\right\rangle =-d_{i-1}\qquad\text{and}%
\qquad\left\langle h_{i},\sigma_{i+1}\right\rangle =-d_{i+1} \label{lima8c}%
\end{equation}
because $\sigma_{i-1}$, $\sigma_{i+1}$ are the facets of the cone $u_{i}$ by
our notational convention from Equation \ref{lafig01}, \cite[p. 61,
Lemma]{MR1234037}. Since the cone is smooth, these rays form a vector space
basis of $N_{\mathbb{R}}$. For example, for $i=2$ and if we write $x^{\xi
}y^{\psi}$ for the monomial exponents, we have to solve the equations%
\begin{align*}%
\begin{pmatrix}
\xi\\
\psi
\end{pmatrix}
\sigma_{1}  &  =-a\qquad\text{that is}\qquad%
\begin{pmatrix}
\xi\\
\psi
\end{pmatrix}
\cdot%
\begin{pmatrix}
0\\
1
\end{pmatrix}
=\psi=-a\\%
\begin{pmatrix}
\xi\\
\psi
\end{pmatrix}
\sigma_{3}  &  =-b\qquad\text{that is}\qquad%
\begin{pmatrix}
\xi\\
\psi
\end{pmatrix}
\cdot%
\begin{pmatrix}
-1\\
l
\end{pmatrix}
=-\xi+\psi l=-b
\end{align*}
and therefore $h_{2}:=x^{b-la}y^{-a}$. We leave the rest of this computation
(i.e. $h_{0},h_{4},h_{6}$) to the reader. The result is%
\begin{equation}
h_{0}:=y^{-a}\qquad h_{2}:=x^{b-la}y^{-a}\qquad h_{4}:=x^{b}\qquad
h_{6}:=1\text{.} \label{lima1}%
\end{equation}
Thus, the \v{C}ech $1$-cocycle for the line bundle class of $\mathcal{O}%
_{X}(D)$ in $\operatorname*{Pic}(X)\cong H^{1}(X,\mathcal{O}_{X}^{\times})$ is
given by $(f_{\alpha\beta})$ and $f_{\alpha\beta}:=h_{\beta}h_{\alpha}^{-1}$
(this corresponds to what we had discussed in Elaboration \ref{elab1}).

\begin{remark}
These computations also give the divisor polytope $P_{D}$ (as explained in
\cite[p. 66]{MR1234037}). Its defining inequalities are those of Equation
\ref{lima8c}, just replace \textquotedblleft$\left.  =\right.  $%
\textquotedblright\ with \textquotedblleft$\left.  \geq\right.  $%
\textquotedblright. If $D$ is ample (cf. Equation \ref{l_crit_ample_Example}),
we obtain the polytope%
\begin{equation}%
{\includegraphics[
height=1.2291in,
width=1.281in
]%
{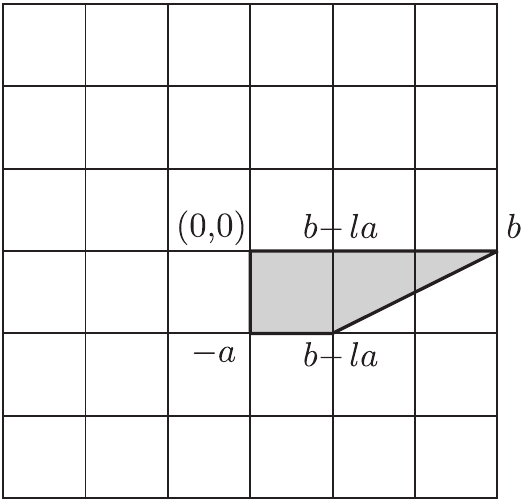}%
}%
\label{fig_A}%
\end{equation}
which has Euclidean volume $(b-la)a$ (for the box on the left) and $\frac
{1}{2}la^{2}$ (for the triangle on the right), and so in total $ba-\frac{1}%
{2}la^{2}$. By \cite[Proposition 6.1, (i)]{MR2571958} the Newton--Okounkov
body $\Delta_{Y_{\bullet}}(D)$ for a suitable flag (see loc. cit.) agrees with
$P_{D}$, so we learn that%
\begin{equation}
\frac{1}{2!}\operatorname*{Vol}\nolimits_{\operatorname*{birat}}%
(D)=\operatorname*{Vol}\nolimits_{\operatorname*{Eucl}}(P_{D})=ab-\frac{1}%
{2}la^{2} \label{lc1}%
\end{equation}
by \cite[Theorem A]{MR2571958}.
\end{remark}

\begin{example}
Let us double check the validity of this using intersection theory. We use the
quick formalism of \cite[\S 2.5, p. 43, p.44, last exercise]{MR1234037}. We
have%
\begin{align*}
l\cdot\sigma_{1}  &  =\sigma_{7}+\sigma_{3}\qquad\qquad0\cdot\sigma_{3}%
=\sigma_{1}+\sigma_{5}\\
(-l)\cdot\sigma_{5}  &  =\sigma_{3}+\sigma_{7}\qquad\qquad0\cdot\sigma
_{7}=\sigma_{1}+\sigma_{5}%
\end{align*}
and then $\sigma_{1}^{2}=-l$ and $\sigma_{3}^{2}=-0$ in the Chow ring, if we
indentify the names of the rays with their divisors. The divisors $\sigma
_{5},\sigma_{7}$ are linearly dependent in the Picard group. Thus,%
\begin{equation}
D^{2}=(a\sigma_{1}+b\sigma_{3})=a^{2}\sigma_{1}^{2}+2ab\sigma_{1}\sigma
_{3}+b^{2}\sigma_{3}^{2}=2ab-la^{2}\text{,} \label{lc1s}%
\end{equation}
which confirms Equation \ref{lc1} by using the characterization of the
birational volume through the asymptotic Riemann--Roch theorem. Again, this is
only valid since we have the assumptions of Equation
\ref{l_crit_ample_Example} in place.
\end{example}

\subsection{Step 2: Using the orbit decomposition}

In order to apply Theorem \ref{thm_B}, we need to cook up a disjoint
decomposition $(\Sigma_{j})_{j\in I}$ of $X$ as a set, as in Equation
\ref{l_ThmB_DisjDecomp}. We do this as follows: By the orbit-cone
correspondence the scheme $X$ can be written as the disjoint union of its
orbits $O(\tau)$, where $\tau$ runs through all cones of the polyhedral fan,
\cite[p. 54]{MR1234037} \cite[Theorem 3.2.6]{MR2810322}. Recall that,%
\begin{equation}
U_{\tau}=\coprod_{\mu\leq\tau}O(\mu)\text{,} \label{lima2}%
\end{equation}
so it will be easy to control $\Sigma_{j}\subseteq U_{j}$ for $j\in I$ if we
only build the $\Sigma_{j}$ from disjoint unions of orbits. Recall that $\eta$
is the trivial cone; just the origin. It corresponds to the big open orbit
$\mathbb{G}_{m}\times\mathbb{G}_{m}$ in the surface. Our naming convention
from Equation \ref{lafig01} suggests the following choice:
\begin{align}
\Sigma_{0}  &  :=O(\eta)\amalg O(\sigma_{-1})\amalg O(u_{0})\subseteq
U_{0}\nonumber\\
\Sigma_{2}  &  :=O(\sigma_{1})\amalg O(u_{2})\subseteq U_{2}\label{lafig2}\\
\Sigma_{4}  &  :=O(\sigma_{3})\amalg O(u_{4})\subseteq U_{4}\nonumber\\
\Sigma_{6}  &  :=O(\sigma_{5})\amalg O(u_{6})\subseteq U_{6}\text{.}\nonumber
\end{align}
Thus, except for adding the big open orbit $O(\eta)$ to $\Sigma_{0}$, we
always just take a rank one $\mathbb{G}_{m}$-torus $O(\sigma_{i-1})$ and a
single closed point $O(u_{i})$. All orbits are present, so this is a valid
disjoint decomposition.

\begin{remark}
The sets $\Sigma_{2}$, $\Sigma_{4}$ and $\Sigma_{4}$ are the underlying sets
of affine lines $\mathbb{A}_{k}^{1}$ in $X$. We will not use this.
\end{remark}

\subsection{Computing the right side}

\subsubsection{Reducing to a finite set of flags}

Let us compute the right side in Theorem \ref{thm_B}, that is%
\begin{align}
&  \sum_{Y_{\bullet}}\sum_{(w_{2},w_{1})\in\mathcal{G}(Y_{\bullet})}\sum
_{c=0}^{2}(-1)^{c}[k\left(  w_{2}\right)  :k]\label{lepsi1a}\\
&  \qquad\qquad\operatorname*{Vol}\left.  \mathrm{simplex}\left\langle
\underline{w}(h_{\alpha(Y_{0})}),\ldots\widehat{\underline{w}(h_{\alpha
(Y_{c})})}\ldots,\underline{w}(h_{\alpha(Y_{2})})\right\rangle \right.
\text{.} \label{lepsi1}%
\end{align}
A priori we have no control which of these uncountably many summands will be
non-zero, there is a huge supply of flags $Y_{\bullet}$. This is best
approached as follows: For the evaluation of each summand we only need to know
the values of $\alpha(Y_{i})\in I$ for $i=0,1,2$ and since $I:=\{0,2,4,6\}$ we
can make a case distinction depending on what values the $\alpha(Y_{i})$
attain. These a priori $4^{3}=64$ cases can quickly be cut down to a
manageable number:

\textit{Possible values for }$\alpha(Y_{0})$\textit{:} Since $Y_{0}$ is a
codimension zero integral closed subscheme of $X$, it can only be all of $X$.
The generic point of $X$ lies in $\Sigma_{0}$ by Equation \ref{lafig2}. Thus,
$\alpha(Y_{0})=0$ for \textit{all} flags.

\textit{Possible values for }$\alpha(Y_{1})$\textit{:} This is more
complicated. The variety $Y_{1}$ is a curve in $X$; we have such for all
possible values in $I$.

We use a trick: In Equation \ref{lepsi1} we only get a contribution if the
valuation vectors $\underline{w}(h_{\alpha(Y_{i})})$ are non-zero. Otherwise
the spanned $2$-simplex is degenerate and has volume zero. But all possible
values for $h_{\alpha(Y_{i})}$ are those listed in Equation \ref{lima1}. These
are (locally in the $U_{i}$) local equations for the divisors attached to the
rays. So, already when computing the first level valuation, i.e. the $v_{1}$
in%
\[
\underline{v}(f)=(\underline{v}_{1}(f),\underline{v}_{2}(f))\text{,}%
\]
it can only be non-zero at the divisors coming from these rays, i.e. the
$T$-invariant divisors on the toric surface. Similarly, all the closed points
which form the $Y_{2}$ of a flag, and which determine the second component
$\underline{v}_{2}(f)$, can only be possibly non-zero when they lie on such a
divisor. Thus, if at least one component of the vector $\underline{v}(f)$
needs to be non-zero, we can restrict for $Y_{1}$ to $T$-invariant divisors;
and indeed for $Y_{0}$ to the intersections of such divisors. Let us record
this essential simplification as a lemma:

\begin{lemma}
In Equation \ref{lepsi1a} a non-zero summand can only stem from a flag
$Y_{\bullet}$ such that%
\begin{equation}
Y_{0}=X\qquad Y_{1}=V(\sigma_{i})\qquad Y_{0}=V(u_{j})\text{,}\qquad
Y_{0}\supset Y_{1}\supset Y_{2}\text{,} \label{lima16a}%
\end{equation}
for some indices $i,j$ (here for an orbit $\tau$ we write $V(\tau)$ for the
orbit closure as in Fulton's book).
\end{lemma}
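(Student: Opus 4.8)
The plan is to show that every individual summand in \ref{lepsi1a}--\ref{lepsi1} attached to a flag $Y_\bullet$ which is \emph{not} of the shape $X\supset V(\sigma_i)\supset V(u_j)$ already vanishes, by exhibiting a zero row in its $2\times 2$ determinant. First I would record the shape of a summand: by Equation \ref{luma9} each term $\operatorname{Vol}\operatorname{simplex}\langle\cdots\rangle$ equals $\tfrac12$ times the determinant of the matrix whose two rows are $\underline{w}_1$ and $\underline{w}_2$ evaluated on the two functions among $h_{\alpha(Y_0)},h_{\alpha(Y_1)},h_{\alpha(Y_2)}$ which survive the omission indexed by $c$. Hence if $\underline{w}_1(h_{\alpha(Y_m)})=0$ for all $m\in\{0,1,2\}$, or $\underline{w}_2(h_{\alpha(Y_m)})=0$ for all $m$, then this matrix has a zero row for every $c$, so all summands coming from that flag (and from any $(w_2,w_1)\in\mathcal{G}(Y_\bullet)$) vanish. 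Moreover $Y_0$ has codimension zero, so $Y_0=X$, whose generic point lies in $\Sigma_0$ by \ref{lafig2}; thus $\alpha(Y_0)=0$ for every flag and the only functions that can occur are the Laurent monomials $h_0,h_2,h_4,h_6$ of \ref{lima1}. So it suffices to understand $\underline{w}_1$ and $\underline{w}_2$ on characters of the torus $T\subseteq X$.

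Next I would pin down $Y_1$. The curve $Y_1$ is integral, so by the orbit--cone correspondence behind Equation \ref{lima2} its generic point is either the generic point of a one-dimensional orbit $O(\sigma_i)$ --- in which case $Y_1=V(\sigma_i)$ --- or it lies in the big torus $O(\eta)=T$. In the second case $x,y$ (and $x^{-1},y^{-1}$) are units on $T$, hence units in the local ring $\mathcal{O}_{X,Y_1}$, hence units in $\mathcal{O}_{w_1}$, which contains $\mathcal{O}_{X,Y_1}$ by Definition \ref{def_SetG}(1); therefore $w_1(h_\alpha)=0$ for every monomial in \ref{lima1}. Since $\underline{w}_1=w_1$ by Definition \ref{def_HRankVal}, the first row is identically zero and the flag contributes nothing. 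So a non-zero summand forces $Y_1=V(\sigma_i)$ for some ray.

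The remaining step --- and the one I expect to be the main obstacle, since it forces one to unwind the iterative recipe for $\underline{w}_2$ in Definition \ref{def_HRankVal} --- is the second row, assuming $Y_1=V(\sigma_i)$. Because $X$ is smooth and toric, $\mathcal{O}_{X,Y_1}$ is a DVR, $w_1$ is the vanishing order along $V(\sigma_i)$ with residue field $k(V(\sigma_i))$, and $V(\sigma_i)$ is locally cut out by a monomial; by Corollary \ref{cor_IndepUnif} the vector $\underline{w}(f)$ is independent of the chosen uniformizers, so I may take $\pi_1$ to be that monomial. Then for each monomial $h_\alpha$ of \ref{lima1}, $g_\alpha:=h_\alpha\pi_1^{-w_1(h_\alpha)}$ is again a Laurent monomial lying in $\mathcal{O}_{w_1}^\times$; its exponent is orthogonal to the primitive generator of $\sigma_i$, so its residue in $k(V(\sigma_i))^\times$ is precisely the restriction of that character to the one-dimensional torus $O(\sigma_i)$ of the curve $V(\sigma_i)$, hence a power of the coordinate $s$ of that $\mathbb{G}_m$. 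Now $Y_2\subset V(\sigma_i)$ is a closed point, and $w_2$ has center $Y_2$ on $V(\sigma_i)$ by Definition \ref{def_SetG}(2); if $Y_2$ is not one of the two $T$-fixed points $V(u_{i\pm 1})$ of $V(\sigma_i)$, then $Y_2\in O(\sigma_i)$, so $s$ is a unit in $\mathcal{O}_{V(\sigma_i),Y_2}$ and hence in $\mathcal{O}_{w_2}$, giving $w_2(\overline{g_\alpha})=0$, i.e. $\underline{w}_2(h_\alpha)=0$ for every relevant $\alpha$. Again the determinant has a zero row and the flag contributes nothing.

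Putting the two steps together, a flag producing a non-zero summand must have $Y_0=X$, $Y_1=V(\sigma_i)$ and $Y_2=V(u_j)$ with $\sigma_i$ a facet of $u_j$, which is the content of \ref{lima16a}. These are finitely many flags --- in fact admissible ones, so each $\mathcal{G}(Y_\bullet)$ is a singleton --- and they are exactly what is evaluated in the remainder of the section. The only genuinely delicate point is the monomial computation of the residue $\overline{g_\alpha}$ along $V(\sigma_i)$ together with the identification of the center of $\mathcal{O}_{w_2}$ with $Y_2$; everything else is bookkeeping with the orbit decomposition.
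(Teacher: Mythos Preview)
Your argument is correct and follows the same idea as the paper: both exploit that the $h_\alpha$ in Equation \ref{lima1} are Laurent monomials, so their zeros and poles lie only along $T$-invariant strata. The paper phrases this in terms of a valuation \emph{vector} (a column) vanishing and the simplex degenerating; your row-vanishing formulation is tighter, since it shows directly that the entire first (resp.\ second) row of every $2\times 2$ determinant in the $c$-sum is zero as soon as $Y_1$ (resp.\ $Y_2$) is not $T$-invariant, which disposes of all three values of $c$ at once.

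One correction: Corollary \ref{cor_IndepUnif} says the \emph{determinant} in Proposition \ref{prop_DetFormula} is independent of the choice of uniformizers, not that the individual components $\underline{w}_i(f)$ are --- Definition \ref{def_HRankVal} explicitly notes the dependence for $n\geq 2$. This does not damage your argument: you compute the second row with the specific monomial uniformizer $\pi_1$, find it identically zero, so the determinant vanishes; by Corollary \ref{cor_IndepUnif} the determinant, and hence each summand, is then zero regardless of the uniformizer. Just rephrase the appeal to the corollary accordingly.
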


Besides cutting down the values of $Y_{\bullet}$ in the first sum in Equation
\ref{lepsi1a}, we get a few more useful facts from this: All the closed points
$V(u_{j})$ regardless the index $j$ are $k$-rational, so in Equation
\ref{lepsi1a} we have $[k\left(  w_{n}\right)  :k]=1$ for all flags we need to
consider. Furthermore, all these flags are flags of $T$-invariant divisors in
a smooth toric variety and therefore admissible, so the set $\mathcal{G}%
(Y_{\bullet})$ contains only a single element, Remark
\ref{rmk_GSetTrivialForAdmissibleFlag} (all the local rings in Definition
\ref{def_SetG} are already integrally closed). This also implies that the
valuation $\underline{w}$ is really just the one attached to $Y_{\bullet}$ itself.

\subsubsection{Eliminating a priori vanishing summands}

Our sum has therefore simplified to the finite sum%
\begin{equation}
\sum_{\substack{Y_{\bullet}\\\text{as in Eq. \ref{lima16a}}}}\sum_{c=0}%
^{2}(-1)^{c}\operatorname*{Vol}\left.  \mathrm{simplex}\left\langle
\underline{w}_{Y_{\bullet}}(h_{\alpha(Y_{0})}),\ldots\widehat{\underline
{w}_{Y_{\bullet}}(h_{\alpha(Y_{c})})}\ldots,\underline{w}_{Y_{\bullet}%
}(h_{\alpha(Y_{2})})\right\rangle \right.  \text{,} \label{lcips2}%
\end{equation}
where $\underline{w}_{Y_{\bullet}}$ is the valuation attached to $Y_{\bullet}%
$. The summation over $c$ triples the summands we have to evaluate. We can
simplify this: Inspecting Equation \ref{luma3aa} and Equation \ref{luma3a} in
the proof underlying our main formula, we see that we get a sum%
\[
\frac{1}{n!}\sum_{Y_{\bullet}}\sum_{(w_{n},\ldots,w_{1})\in\mathcal{F}%
(Y_{\bullet})}[k\left(  w_{n}\right)  :k]\partial_{w_{n}}\cdots\partial
_{w_{1}}\{f_{\alpha(Y_{0})\alpha(Y_{1})},f_{\alpha(Y_{1})\alpha(Y_{2})}%
,\ldots,f_{\alpha(Y_{n-1})\alpha(Y_{n})}\}
\]
first $-$ which later gets transformed into a sum of the shape as in Equation
\ref{lcips2} (see Elaboration \ref{elab1} for details). In our case, the inner
term is%
\[
\frac{1}{2!}\sum_{Y_{\bullet}}\partial_{w_{2}}\partial_{w_{1}}\{f_{\alpha
(Y_{0})\alpha(Y_{1})},f_{\alpha(Y_{1})\alpha(Y_{2})}\}\text{,}%
\]
where $(f_{\alpha\beta})$ are the local equations for the $1$-cocycle of
$\mathcal{O}_{X}(D)$, exactly as explained below Equation \ref{lima1}. We use
the following trick:\ If for a flag $Y_{\bullet}$ the summand%
\[
\partial_{w_{2}}\partial_{w_{1}}\{f_{\alpha(Y_{0})\alpha(Y_{1})}%
,f_{\alpha(Y_{1})\alpha(Y_{2})}\}
\]
is zero, then upon the rewriting in Elaboration \ref{elab1} the $3$ resulting
terms necessarily add up to zero, too, so there is no harm in neglecting these
flags as summands altogether.

Let us carry out this computation now:\ We use the indices $i$ and $j$ as
introduced in Equation \ref{lima16a}. Letting columns represent the values of
$i$ and rows the values of $j$, we obtain the possible summands; their
arguments are%
\[%
\begin{tabular}
[c]{c|cccc}
& $1$ & $3$ & $5$ & $7$\\\hline
$0$ & $\{f_{02},f_{20}\}$ & $\{f_{04},f_{40}\}$ & $\{f_{06},f_{60}\}$ &
$\{f_{00},f_{00}\}$\\
$2$ & $\{f_{02},f_{22}\}$ & $\{f_{04},f_{42}\}$ & $\{f_{06},f_{62}\}$ &
$\{f_{00},f_{02}\}$\\
$4$ & $\{f_{02},f_{24}\}$ & $\{f_{04},f_{44}\}$ & $\{f_{06},f_{64}\}$ &
$\{f_{00},f_{04}\}$\\
$6$ & $\{f_{02},f_{26}\}$ & $\{f_{04},f_{46}\}$ & $\{f_{06},f_{66}\}$ &
$\{f_{00},f_{06}\}$%
\end{tabular}
\ \ \ \ \
\]
Most terms must be zero on general grounds: If both indices agree, we have
$f_{\alpha\alpha}=1$ because%
\begin{equation}
f_{\alpha\beta}:=h_{\beta}h_{\alpha}^{-1}\text{.} \label{l_linz}%
\end{equation}
This kills for example the entire right column and a downward diagonal. By the
same formula, we also have $\{f_{\alpha\beta},f_{\beta\alpha}\}=\{f_{\alpha
\beta},f_{\alpha\beta}^{-1}\}$ and for any $f$, $\{f,f^{-1}\}$ is $2$-torsion
in $K_{2}^{\operatorname*{M}}$ by\ Lemma \ref{lemma_GradedCommutative}, so
must be mapped to zero under any map to the reals, and therefore cannot
contribute non-trivially. This leaves only the following pairs $(i,j)$,%
\[%
\begin{tabular}
[c]{c|cccc}
& $1$ & $3$ & $5$ & $7$\\\hline
$0$ &  &  &  & \\
$2$ &  & $\{f_{04},f_{42}\}$ & $\{f_{06},f_{62}\}$ & \\
$4$ & $\{f_{02},f_{24}\}$ &  & $\{f_{06},f_{64}\}$ & \\
$6$ & $\{f_{02},f_{26}\}$ & $\{f_{04},f_{46}\}$ &  &
\end{tabular}
\ \ \ \ \
\]
However, many of these index pairs $(i,j)$ cannot occur such that $Y_{\bullet
}$ is a flag, i.e. the inclusion condition in Equation \ref{lima16a} would be
broken. In detail: Note that $\sigma_{i}$ according to Equation \ref{lafig2}
lies in $\Sigma_{i+1}$, so $\alpha(\sigma_{i})=i+1$ (and read $\Sigma_{8}$ as
$\Sigma_{0}$ in the case of $\sigma_{7}$). Similarly, the single point of the
orbit $u_{j}$ lies in $\Sigma_{j}$. Once we are in the orbit $O(\sigma_{i})$,
then by the orbit-cone correspondence, its closure is%
\[
V(\sigma_{i})=O(\sigma_{i})\amalg O(u_{i-1})\amalg O(u_{i+1})\text{,}%
\]
by \cite[p. 54]{MR1234037}. Again, our special indexing comes in handy. We
deduce that if $Y_{1}=V(\sigma_{i})$, then the closed point $Y_{2}$ of a flag
can only lie in one of these three sets.

If it lies on the orbit $O(\sigma_{i})$, this means $\alpha(Y_{1}%
)=\alpha(Y_{2})$ and thus $f_{\alpha(Y_{1})\alpha(Y_{2})}=1$ by Equation
\ref{l_linz}, implying that the term $\{-,-\}$ is zero. Hence, we may restrict
to $Y_{2}$ being the closed points $O(u_{i-1})$ or $O(u_{i+1})$. Again, by
Equation \ref{lafig2} if it is $O(u_{i+1})$, they still both lie in
$\Sigma_{i+1}$, so still $\alpha(Y_{1})=\alpha(Y_{2})$, and thus $\{-,-\}$ is
zero. Thus, only $O(u_{i-1})$ is possible if we want a non-zero contribution.
Using this constraint, only two possibly non-zero summands remain:%
\[%
\begin{tabular}
[c]{c|cccc}
& $1$ & $3$ & $5$ & $7$\\\hline
$0$ &  &  &  & \\
$2$ &  & $\{f_{04},f_{42}\}$ &  & \\
$4$ &  &  & $\{f_{06},f_{64}\}$ & \\
$6$ &  &  &  &
\end{tabular}
\ \ \
\]
and these belong to the following flags:

\begin{enumerate}
\item $Y_{\bullet}=(X\supset V(\sigma_{3})\supset V(u_{2}))$ with
$\alpha(Y_{0})=0$, $\alpha(Y_{1})=4$, $\alpha(Y_{2})=2$, and

\item $Y_{\bullet}=(X\supset V(\sigma_{5})\supset V(u_{4}))$ with
$\alpha(Y_{0})=0$, $\alpha(Y_{1})=6$, $\alpha(Y_{2})=4$.
\end{enumerate}

The whole point of the paper is the presence of different flags along which we
take the span of valuation vectors, and here we have isolated \textit{the} two
critical flags in our example. Next, we return to our original formula in
Equation \ref{lcips2} $-$ knowing that we only need to evaluate it for these
two flags $Y_{\bullet}$.

\subsubsection{The simplex attached to $X\supset V(\sigma_{3})\supset
V(u_{2})$}

For the flag $X\supset V(\sigma_{3})\supset V(u_{2})$, we can conveniently
compute the valuation vectors in an affine open of $X$. The closed point
$V(u_{2})$ lies in $U_{2}$ by the orbit-cone correspondence. We easily compute
that $U_{2}=\operatorname*{Spec}k[x^{-1},x^{l}y]$. In this open the local
equation for $V(\sigma_{3})$ is the principal ideal $(x^{-1})$, so this
corresponds to the valuation $\underline{w}_{1}$, and $V(u_{2})$ is cut out by
$(x^{-1},x^{l}y)$, so $x^{l}y$ is a uniformizer for the component
$\underline{w}_{2}$. Equation \ref{lcips2} asks us to compute%
\begin{align*}
&  \sum_{c=0}^{2}(-1)^{c}\operatorname*{Vol}\left.  \mathrm{simplex}%
\left\langle \underline{w}_{Y_{\bullet}}(h_{\alpha(Y_{0})}),\ldots
\widehat{\underline{w}_{Y_{\bullet}}(h_{\alpha(Y_{c})})}\ldots,\underline
{w}_{Y_{\bullet}}(h_{\alpha(Y_{2})})\right\rangle \right. \\
&  \qquad=\operatorname*{Vol}\left.  \mathrm{simplex}\left\langle
\underline{w}_{Y_{\bullet}}(h_{4}),\underline{w}_{Y_{\bullet}}(h_{2}%
)\right\rangle \right.  -\operatorname*{Vol}\left.  \mathrm{simplex}%
\left\langle \underline{w}_{Y_{\bullet}}(h_{0}),\underline{w}_{Y_{\bullet}%
}(h_{2})\right\rangle \right. \\
&  \qquad\qquad\qquad+\operatorname*{Vol}\left.  \mathrm{simplex}\left\langle
\underline{w}_{Y_{\bullet}}(h_{0}),\underline{w}_{Y_{\bullet}}(h_{4}%
)\right\rangle \right.  \text{.}%
\end{align*}
We get%
\begin{align*}
&  \operatorname*{Vol}\left.  \mathrm{simplex}\left\langle \underline
{w}_{Y_{\bullet}}(h_{4}),\underline{w}_{Y_{\bullet}}(h_{2})\right\rangle
\right.  =\frac{1}{2}\det%
\begin{pmatrix}
\underline{w}_{1}(x^{b}) & \underline{w}_{1}(x^{b-la}y^{-a})\\
\underline{w}_{2}(x^{b}) & \underline{w}_{2}(x^{b-la}y^{-a})
\end{pmatrix}
\\
&  \qquad=\frac{1}{2}\det%
\begin{pmatrix}
\underline{w}_{1}(x^{b}) & \underline{w}_{1}((x^{l}y)^{-a}\cdot x^{b})\\
\underline{w}_{2}(x^{b}) & \underline{w}_{2}((x^{l}y)^{-a}\cdot x^{b})
\end{pmatrix}
=\frac{1}{2}\det%
\begin{pmatrix}
-b & -b\\
0 & -a
\end{pmatrix}
=\frac{1}{2}ab\text{.}%
\end{align*}
The next $2$-simplex is more interesting. Note that $y^{-a}=\left(
x^{l}y\right)  ^{-a}\cdot x^{la}$ and therefore%
\begin{align*}
&  \operatorname*{Vol}\left.  \mathrm{simplex}\left\langle \underline
{w}_{Y_{\bullet}}(h_{0}),\underline{w}_{Y_{\bullet}}(h_{2})\right\rangle
\right.  =\frac{1}{2}\det%
\begin{pmatrix}
\underline{w}_{1}(\left(  x^{l}y\right)  ^{-a}\cdot x^{la}) & \underline
{w}_{1}((x^{l}y)^{-a}\cdot x^{b})\\
\underline{w}_{2}(\left(  x^{l}y\right)  ^{-a}\cdot x^{la}) & \underline
{w}_{2}((x^{l}y)^{-a}\cdot x^{b})
\end{pmatrix}
\\
&  \qquad=\frac{1}{2}\det%
\begin{pmatrix}
-la & -b\\
-a & -a
\end{pmatrix}
=\frac{1}{2}\left(  la^{2}-ab\right)  \text{,}%
\end{align*}
and finally, $\operatorname*{Vol}\left.  \mathrm{simplex}\left\langle
\underline{w}_{Y_{\bullet}}(h_{0}),\underline{w}_{Y_{\bullet}}(h_{4}%
)\right\rangle \right.  $ equals%
\[
=\frac{1}{2}\det%
\begin{pmatrix}
\underline{w}_{1}(\left(  x^{l}y\right)  ^{-a}\cdot x^{la}) & \underline
{w}_{1}(x^{b})\\
\underline{w}_{2}(\left(  x^{l}y\right)  ^{-a}\cdot x^{la}) & \underline
{w}_{2}(x^{b})
\end{pmatrix}
=\frac{1}{2}\det%
\begin{pmatrix}
-la & -b\\
-a & 0
\end{pmatrix}
=-\frac{1}{2}ab\text{.}%
\]
Thus, the total summand for this flag in Equation \ref{lcips2} is%
\[
\frac{1}{2}ab-\frac{1}{2}\left(  la^{2}-ab\right)  +\frac{1}{2}\left(
-ab\right)  =\frac{1}{2}ab-\frac{1}{2}la^{2}\text{.}%
\]

\subsubsection{The simplex attached to $X\supset V(\sigma_{5})\supset
V(u_{4})$}

Now, we need to do the analogous evaluation for this flag, but this turns out
to be shorter. We write $\underline{w}^{\prime}$ for the valuation of this
flag. The affine open $U_{4}=\operatorname*{Spec}k[x^{-1},x^{-l}y^{-1}]$
contains $O(u_{4})$ and $O(\sigma_{5})$ by Equation \ref{lima2}. The local
equation for the orbit closure $V(\sigma_{5})$ is the principal ideal
$(x^{-1})$. The rank one valuation along this ideal is the first component
$\underline{w}_{1}^{\prime}$. Moreover, $x^{-l}y^{-1}$ is a uniformizer for
the valuation $\underline{w}_{2}^{\prime}$. For this flag, Equation
\ref{lcips2} adds the contribution from%
\begin{align*}
&  \sum_{c=0}^{2}(-1)^{c}\operatorname*{Vol}\left.  \mathrm{simplex}%
\left\langle \underline{w}_{Y_{\bullet}}(h_{\alpha(Y_{0})}),\ldots
\widehat{\underline{w}_{Y_{\bullet}}(h_{\alpha(Y_{c})})}\ldots,\underline
{w}_{Y_{\bullet}}(h_{\alpha(Y_{2})})\right\rangle \right. \\
&  \qquad=\operatorname*{Vol}\left.  \mathrm{simplex}\left\langle
\underline{w}_{Y_{\bullet}}(h_{6}),\underline{w}_{Y_{\bullet}}(h_{4}%
)\right\rangle \right.  -\operatorname*{Vol}\left.  \mathrm{simplex}%
\left\langle \underline{w}_{Y_{\bullet}}(h_{0}),\underline{w}_{Y_{\bullet}%
}(h_{4})\right\rangle \right. \\
&  \qquad\qquad\qquad+\operatorname*{Vol}\left.  \mathrm{simplex}\left\langle
\underline{w}_{Y_{\bullet}}(h_{0}),\underline{w}_{Y_{\bullet}}(h_{6}%
)\right\rangle \right.  \text{.}%
\end{align*}
Note that $h_{6}=1$, so $\underline{w}_{Y_{\bullet}}(h_{6})=0$ irrespective
the flag. Thus, two of these $2$-simplices are degenerate and contribute no
volume. The only remaining term is%
\begin{align*}
&  =-\operatorname*{Vol}\left.  \mathrm{simplex}\left\langle \underline
{w}_{Y_{\bullet}}(h_{0}),\underline{w}_{Y_{\bullet}}(h_{4})\right\rangle
\right.  =-\frac{1}{2}\det%
\begin{pmatrix}
\underline{w}_{1}^{\prime}(y^{-a}) & \underline{w}_{1}^{\prime}(x^{b})\\
\underline{w}_{2}^{\prime}(y^{-a}) & \underline{w}_{2}^{\prime}(x^{b})
\end{pmatrix}
\\
&  =-\frac{1}{2}\det%
\begin{pmatrix}
\underline{w}_{1}^{\prime}((x^{-l}y^{-1})^{a}\cdot x^{la}) & \underline{w}%
_{1}^{\prime}(x^{b})\\
\underline{w}_{2}^{\prime}((x^{-l}y^{-1})^{a}\cdot x^{la}) & \underline{w}%
_{2}^{\prime}(x^{b})
\end{pmatrix}
=-\frac{1}{2}\det%
\begin{pmatrix}
-la & -b\\
a & 0
\end{pmatrix}
=\frac{1}{2}ab\text{.}%
\end{align*}

\subsection{Conclusion}

Combining the contribution of both flags, we obtain that the right-hand side
of Theorem \ref{thm_B} is%
\[
\left(  \frac{1}{2}ab-\frac{1}{2}la^{2}\right)  +\left(  \frac{1}{2}ab\right)
=ab-\frac{1}{2}la^{2}\text{.}%
\]
We had already computed the volume of the Newton--Okounkov body in two
different ways in Equation \ref{lc1} as well as Equation \ref{lc1s}, and
(luckily!) we see that all values agree. It remains to compute the left-hand
side of Theorem \ref{thm_B}. Recall that this side originates from Proposition
\ref{prop_TrivPolytope}. We need to pick $m$ such that $mD$ is very ample, but
on a toric variety a line bundle is ample if and only if it is very ample, so
$m=1$ is enough. But then note that the trivializing equations are those
listed in Equation \ref{lima1}, but they also mark the extremal points of the
polytope in Figure \ref{fig_A}, so this \textit{is} the convex polytope in
question. In total, Theorem \ref{thm_B} hence reads: For the standard
lexicographic valuation $\underline{v}$ of a toric variety we have%
\begin{align*}
&  \operatorname*{Vol}\left(  \mathrm{convex}\text{ }\mathrm{hull}\left(
\underline{v}(y^{-a});\underline{v}(x^{b-la}y^{-a});\underline{v}%
(x^{b});0\right)  \right)  =\\
&  \qquad\frac{1}{2}\det%
\begin{pmatrix}
\underline{w}_{1}(x^{b}) & \underline{w}_{1}(x^{b-la}y^{-a})\\
\underline{w}_{2}(x^{b}) & \underline{w}_{2}(x^{b-la}y^{-a})
\end{pmatrix}
-\frac{1}{2}\det%
\begin{pmatrix}
\underline{w}_{1}(\left(  x^{l}y\right)  ^{-a}\cdot x^{la}) & \underline
{w}_{1}((x^{l}y)^{-a}\cdot x^{b})\\
\underline{w}_{2}(\left(  x^{l}y\right)  ^{-a}\cdot x^{la}) & \underline
{w}_{2}((x^{l}y)^{-a}\cdot x^{b})
\end{pmatrix}
\\
&  \qquad+\frac{1}{2}\det%
\begin{pmatrix}
\underline{w}_{1}(\left(  x^{l}y\right)  ^{-a}\cdot x^{la}) & \underline
{w}_{1}(x^{b})\\
\underline{w}_{2}(\left(  x^{l}y\right)  ^{-a}\cdot x^{la}) & \underline
{w}_{2}(x^{b})
\end{pmatrix}
-\frac{1}{2}\det%
\begin{pmatrix}
\underline{w}_{1}^{\prime}(y^{-a}) & \underline{w}_{1}^{\prime}(x^{b})\\
\underline{w}_{2}^{\prime}(y^{-a}) & \underline{w}_{2}^{\prime}(x^{b})
\end{pmatrix}
\text{.}%
\end{align*}
This agrees with Equation \ref{lvj}, except that all the arguments are spelt out.%

\appendix

\section{Alternative proof of the main theorem}

In this appendix, we sketch an alternative proof of the main results. It
removes any use of Milnor $K$-theory and uses residues instead. The proof is a
little weaker because it only works in characteristic zero.

We only describe how to change the proof. Firstly, in Diagram \ref{lfe1}
replace $H^{n}(X,\mathcal{K}_{n}^{\operatorname*{M}})$ by $H^{n}%
(X,\Omega_{X/\mathbb{C}}^{n})$ and $\gamma$ by the differential logarithm%
\begin{align*}
H^{\ast}(X,\mathcal{O}_{X}^{\times}\otimes\cdots\otimes\mathcal{O}_{X}%
^{\times})  &  \longrightarrow H^{\ast}(X,\mathcal{K}_{\ast}%
^{\operatorname*{M}})\longrightarrow H^{\ast}(X,\Omega_{X/k}^{\ast})\\
x_{1}\otimes\cdots\otimes x_{n}  &  \longmapsto\{x_{1},\ldots,x_{n}%
\}\longmapsto\frac{\mathrm{d}x_{1}}{x_{1}}\wedge\cdots\wedge\frac
{\mathrm{d}x_{n}}{x_{n}}\text{.}%
\end{align*}
If we work over the complex numbers, the map $\tau$ of Diagram \ref{lfe1} then
can be replaced by%
\[
H^{n}(X,\Omega_{X/\mathbb{C}}^{n})\cong H^{n,n}(X)\subseteq H^{2n}%
(X,\mathbb{C})\longrightarrow\mathbb{C}\text{,}%
\]
i.e. under the Dolbeault isomorphism we identify the cohomology group
$H^{n}(X,\Omega_{X/\mathbb{C}}^{n})$ with the $(n,n)$-classes in the top Betti
cohomology. Doing these replacements is compatible with intersection theory,
i.e. this still computes the same intersection number. However, even if $k$ is
not the complex numbers, we can also just use the evaluation map of Serre
duality, $H^{n}(X,\Omega_{X/k}^{n})\rightarrow k$, using that $X/k$ is
integral smooth proper over $k$. Now use the Cousin resolution of
$\Omega_{X/k}^{n}$, \cite{MR0222093}. Its terms are skyscraper sheaves of the
shape%
\[
U\mapsto\bigoplus_{x\in U^{(i)}}H_{x}^{p}(X,\Omega_{X/k}^{n})\text{,}%
\qquad\text{where }H_{x}^{p}(-,-)\text{ denotes local cohomology,}%
\]
where $U^{(i)}$ denotes the set of points $x$ such that $\overline{\{x\}}$ has
codimension $i$ in $X$. The `algebraic partitions of unity' of \cite[\S 2.1]%
{MR3104562} exist for such sheaves. Notably, \cite[Lemma 2]{MR3104562}
applies. Then form the double complex comparing the \v{C}ech cohomology of
$\Omega_{X/k}^{n}$ in a concrete open cover $\mathfrak{U}$ with the cohomology
of the Cousin complex, imitating \cite[\S 2.2]{MR3104562}. The boundary maps
$\partial_{(-)}$ get replaced by residue maps; the norm maps get replaced by
trace maps. One proves the analogue of Proposition \ref{prop_DetFormula},
where the left side gets replaced by $\operatorname*{res}_{v_{n}}%
\cdots\operatorname*{res}_{v_{1}}\left(  \frac{\mathrm{d}f_{1}}{f_{1}}%
\wedge\cdots\wedge\frac{\mathrm{d}f_{n}}{f_{n}}\right)  $ and
\textquotedblleft$\operatorname*{res}_{v}$\textquotedblright\ being the local
residue map. This term can be understood as a Grothendieck residue symbol.
Once this is all set up, adapt the proof of Theorem \ref{thm_MainFormula}. If
one runs this variant of the proof in chacteristic $p>0$, we only obtain
equality in Theorem \ref{thm_MainFormula} modulo $p$, so this proof is
strictly weaker than the one using Milnor $K$-groups.

\bibliographystyle{amsalpha}
\bibliography{ollinewbib}

\end{document}